\documentclass[11pt, a4paper]{article}
\usepackage{times}
\usepackage{a4wide}
\usepackage{hyperref}
\usepackage[british]{babel}
\usepackage{enumerate, longtable}
\usepackage{amsmath, amscd, amsfonts, amsthm, amssymb, latexsym, comment, stmaryrd, graphicx, color}
\usepackage[all]{xypic}
\usepackage[T1]{fontenc}
\usepackage[utf8]{inputenc}

\newtheorem{thm}{Theorem}[section]
\newtheorem{lem}[thm]{Lemma}
\newtheorem{defi}[thm]{Definition}
\newtheorem{rem}[thm]{Remark}

\newtheorem{prop}[thm]{Proposition}
\newtheorem{cor}[thm]{Corollary}
\newtheorem{notation}[thm]{Notation}
\newtheorem{setup}[thm]{Set-up}

\newcommand{\Hom}{{\rm Hom}}
\newcommand{\GL}{\mathrm{GL}}
\newcommand{\SL}{\mathrm{SL}}
\newcommand{\univ}{\mathrm{univ}}
\newcommand{\fin}{\mathrm{fin}}
\newcommand{\real}{\mathrm{real}}
\newcommand{\minim}{\mathrm{min}}
\newcommand{\id}{\mathrm{id}}
\newcommand{\ad}{\mathrm{ad}}
\newcommand{\dih}{\mathrm{dih}}
\newcommand{\incl}{\mathrm{incl}}
\newcommand{\Ind}{{\rm Ind}}

\newcommand{\ab}{\mathrm{ab}}
\newcommand{\inft}{\mathrm{inf}}
\newcommand{\tr}{\mathrm{tr}}

\DeclareMathOperator{\Image}{im}
\DeclareMathOperator{\Gal}{Gal}
\DeclareMathOperator{\Aut}{Aut}
\DeclareMathOperator{\Frob}{Frob}
\DeclareMathOperator{\Tr}{Tr}
\DeclareMathOperator{\h}{H}
\DeclareMathOperator{\hhat}{\hat{H}}

\newcommand{\plim}[1]{\underset{#1}{\underleftarrow{\lim} \;}}

\newcommand{\mat}[4]{
 \left(  \begin{smallmatrix} #1 & #2 \\ #3 & #4 \end{smallmatrix} \right)}
\newcommand{\vect}[2]{
 \left(  \begin{smallmatrix} #1 \\ #2 \end{smallmatrix} \right)}

\newcommand{\calC}{\mathcal{C}}

\newcommand{\calG}{\mathcal{G}}

\newcommand{\calM}{\mathcal{M}}

\newcommand{\calO}{\mathcal{O}}

\newcommand{\calU}{\mathcal{U}}

\newcommand{\fm}{\mathfrak{m}}
\newcommand{\fn}{\mathfrak{n}}

\newcommand{\fq}{\mathfrak{q}}

\newcommand{\CC}{\mathbb{C}}

\newcommand{\FF}{\mathbb{F}}

\newcommand{\NN}{\mathbb{N}}

\newcommand{\QQ}{\mathbb{Q}}

\newcommand{\TT}{\mathbb{T}}

\newcommand{\ZZ}{\mathbb{Z}}

\newcommand{\Qbar}{\overline{\QQ}}

\newcommand{\Kbar}{\overline{K}}
\newcommand{\Hbar}{{\overline{H}}}
\newcommand{\Gbar}{{\overline{G}}}
\newcommand{\chibar}{{\overline{\chi}}}
\newcommand{\chibarhat}{\widehat{\overline{\chi}}}

\newcommand{\rhobar}{{\overline{\rho}}}

\begin{document}

\title{Dihedral Universal Deformations}
\author{Shaunak V.\ Deo \and Gabor Wiese}


\maketitle

\begin{abstract}
This article deals with universal deformations of dihedral representations with a particular focus on the question
when the universal deformation is dihedral. Results are obtained in three settings:
(1) representation theory, (2) algebraic number theory, (3) modularity.
As to (1), we prove that the universal deformation is dihedral if all infinitesimal deformations are dihedral.
Concerning (2) in the setting of Galois representations of number fields, we give sufficient conditions to ensure that the universal deformation
relatively unramified outside a finite set of primes is dihedral, and discuss in how far these conditions are necessary.
As side-results, we obtain cases of the unramified Fontaine--Mazur conjecture, and in many cases positively answer a question of Greenberg and Coleman on the
splitting behaviour at~$p$ of $p$-adic Galois representations attached to newforms.
As to (3), we prove a modularity theorem of the form `$R=\TT$' for parallel weight one Hilbert modular forms for cases when the
minimal universal deformation is dihedral.

MS Classification: 11F80 (primary), 11F41, 11R29, 11R37
\end{abstract}

\section{Introduction}

The basic object in this article is a continuous absolutely irreducible representation
$$ \rhobar: G \to \GL_2(\FF)$$
that is  {\em dihedral} in the sense that it is induced from a character,
where $G$ is a profinite group and $\FF$ is a finite field of characteristic~$p$.
We consider a deformation $\rho: G \to \GL_2(R)$ of $\rhobar$ for any complete local Noetherian algebra $R$ over $W(\FF)$, the ring of Witt vectors of~$\FF$, with residue field~$\FF$.
We prove results in the following three settings:

\begin{enumerate}[(1)]
\item \underline{Representation theory results:}

We prove useful alternate characterisations to the dihedral property of a deformation $\rho$ of $\rhobar$ as above.
We also prove that being dihedral is an {\em infinitesimal property}, in the following sense:
the universal deformation of $\rhobar$ is dihedral if and only if all infinitesimal deformations are dihedral.

\item \underline{Number theory results:}

Here we let $G$ be $G_K = \Gal(\Kbar/K)$, the absolute Galois group of a number field~$K$. We give sufficient conditions, using class field theory,
to ensure that the universal deformation of~$\rhobar$ relatively unramified outside a finite set of primes remains dihedral.
In those cases, we compute the structure of the corresponding universal deformation ring and discuss in a series of remarks in how far the sufficient conditions are necessary.
We apply our results on the one hand to Boston's strengthening of the unramified Fontaine--Mazur conjecture.
On the other hand, we positively answer many cases of a question of Greenberg and Coleman on the splitting behaviour at~$p$
of the $p$-adic Galois representation attached to a newform.

\item \underline{Modularity results (an `$R=\TT$-theorem'):}

Assume in addition that the number field~$K$ is totally real, that $\rhobar$ is unramified above~$p$, and that certain other conditions are satisfied.
We prove that the minimal deformation ring of~$\rhobar$ coincides with the Hecke algebra acting on certain Hilbert modular forms in parallel weight one.
\end{enumerate}

We now elaborate more on these results.

\subsection{Representation theory results}\label{sec:intro-rep}

Let $H \lhd G$ be an index~$2$ subgroup such that there is a character $\chibar: H \to \FF^\times$ and $\rhobar$ is the induction of $\chibar$ from $H$ to~$G$ (these exist by the definition of dihedral representations \ref{defi:dihedral}).
Since $\rhobar$ is assumed to be absolutely irreducible, the projective image of $\rhobar$ is a dihedral group.
If the projective image of $\rhobar$ is a non-abelian dihedral group, then there is a unique index $2$ subgroup $H \lhd G$ such that $\rhobar \simeq \Ind_H^G (\chibar)$.
On the other hand, if the projective image is $\ZZ/2\ZZ \times \ZZ/2\ZZ$, then there are three such index $2$ subgroups of $G$.

As initiated by Boston \cite{boston} our analysis of deformations of~$\rhobar$ will be through actions on pro-$p$ groups, as follows.
Let $R$ be a complete Noetherian local $W(\FF)$-algebra with residue field~$\FF$.
Let $\rho: G \to \GL_2(R)$ be a lift of~$\rhobar$ and define the pro-$p$ group~$\Gamma_\rho$
by the following diagram with exact rows:
\begin{equation}\label{eq:seq}
 \xymatrix@=.5cm{
0 \ar@{->}[r]& \Gamma_\rho\ar@{->}[r] & \Image(\rho)\ar@{->}[r] & \rhobar(G) \ar@{->}[r] & 0 \\
0 \ar@{->}[r]& \Gamma_\rho\ar@{->}[r]\ar@{=}[u] & \Image(\rho|_H)\ar@{->}[r]\ar@{^(->}[u] & \rhobar(H)\ar@{->}[r] \ar@{->}@/^1pc/[l]^(.5){s} \ar@{^(->}[u] & 0 \\
}\end{equation}
Let $\Gbar^\ad$ be the image of the adjoint representation of~$\rhobar$ and $\Hbar^\ad$ the image of its restriction to~$H$.
As indicated, the lower sequence in \eqref{eq:seq} always splits, and the upper sequence splits if $p>2$.
This gives us an action on $\Gamma_\rho$ of $\Hbar^\ad$ in all cases, and of $\Gbar^\ad$ if $p>2$ or $\Gamma_\rho$ is abelian (see Lemma~\ref{lem:split-basis}).

Our first main result is the following characterisation of dihedral deformations via the $p$-Frattini quotient $\Gamma_\rho/\Phi(\Gamma_\rho)$ of~$\Gamma_\rho$,
{\it i.e.} the largest continuous quotient of $\Gamma_\rho$ that is an elementary abelian $p$-group (see \S\ref{subsec:frat}).

\begin{thm}\label{thm:inf-dih}
The following statements are equivalent:
\begin{enumerate}[(i)]
\item \label{item:id:i} There is a lift $\chi: H \to R^\times$ of $\chibar$ such that $\rho$ is equivalent to $\Ind_H^G(\chi)_R$,
the induction of $\chi$ from $H$ to~$G$.
\item \label{item:id:ii} The action of $\Hbar^\ad$ on $\Gamma_\rho$ is trivial (and hence $\rho(H) \cong \Gamma_\rho \times s(\Image(\rhobar|_H))$).
\item \label{item:id:iii} The action of $\Hbar^\ad$ on $\Gamma_\rho/\Phi(\Gamma_\rho)$ is trivial.
\end{enumerate}
\end{thm}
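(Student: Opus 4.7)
I would establish the three implications $(\ref{item:id:i})\Rightarrow(\ref{item:id:ii})$, $(\ref{item:id:iii})\Rightarrow(\ref{item:id:ii})$, and $(\ref{item:id:ii})\Rightarrow(\ref{item:id:i})$; together with the tautological $(\ref{item:id:ii})\Rightarrow(\ref{item:id:iii})$ these close the equivalence. For $(\ref{item:id:i})\Rightarrow(\ref{item:id:ii})$, if $\rho=\Ind_H^G\chi$ in its standard basis then $\rho|_H = \chi \oplus \chi^\sigma$ takes values in the diagonal torus, so $\Image(\rho|_H)$ is abelian, $\Gamma_\rho$ is central in $\Image(\rho|_H)$, and conjugation by any splitting $s$ of $\Image(\rhobar|_H)$ is trivial on $\Gamma_\rho$. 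The trivial $\Hbar^\ad$-action and the direct-product decomposition follow immediately.

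The heart of the theorem is $(\ref{item:id:iii})\Rightarrow(\ref{item:id:ii})$. The crucial observation is that, in a basis diagonalising $\rhobar|_H$, the image $\rhobar(H)$ lies in the diagonal torus of $\GL_2(\FF)$, so $\Hbar^\ad$ embeds into $\FF^\times$ and in particular has order coprime to~$p$. I would then invoke the classical fact that for any pro-$p$ group $\Gamma$ the kernel of $\Aut(\Gamma) \to \Aut(\Gamma/\Phi(\Gamma))$ is pro-$p$; consequently, a group of order coprime to~$p$ acting on $\Gamma$ and trivially on $\Gamma/\Phi(\Gamma)$ must act trivially on $\Gamma$. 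Applying this to the $\Hbar^\ad$-action on $\Gamma_\rho$ yields $(\ref{item:id:ii})$.

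For $(\ref{item:id:ii})\Rightarrow(\ref{item:id:i})$, I would first use rigidity of prime-to-$p$ group representations over $W(\FF)$ to strictly conjugate $\rho$ so that $s(\Image(\rhobar|_H))$ becomes the Teichmüller lift of the diagonal $\Image(\rhobar|_H)$, consisting of honest diagonal matrices. Absolute irreducibility of $\rhobar$ gives $\chibar \ne \chibar^\sigma$, so some $s(h_0)$ is diagonal with entries differing by a unit of~$R$; its centraliser in $\GL_2(R)$ is exactly the diagonal torus. By $(\ref{item:id:ii})$, $\Gamma_\rho$ commutes with $s(h_0)$ and therefore lies in the diagonal torus, so $\rho|_H$ is diagonal, say $\rho|_H = \chi \oplus \chi'$. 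The residual inequality $\chibar \ne \chibar^\sigma$ forces any element of $G \setminus H$ to conjugate $\chi$ to $\chi'$, whence $\chi' = \chi^\sigma$; a short bookkeeping calculation (adjusting the basis by a diagonal matrix that reduces to the identity) then identifies $\rho$ with $\Ind_H^G \chi$ up to strict equivalence.

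The main obstacle is the coprime-action-on-Frattini-quotient ingredient in $(\ref{item:id:iii})\Rightarrow(\ref{item:id:ii})$ -- a classical but non-formal group-theoretic fact whose pro-$p$ version requires a careful citation. The remaining implications reduce to inspection of induced representations and a centraliser computation, relying only on rigidity of prime-to-$p$ representations as a further non-elementary input. I would also expect Lemma~\ref{lem:split-basis} to be invoked to pin down how the $\Hbar^\ad$-action on $\Gamma_\rho$ depends on the choice of splitting~$s$, ensuring that statements (\ref{item:id:ii}) and (\ref{item:id:iii}) are well-posed.
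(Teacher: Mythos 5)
Your proposal is correct and follows essentially the same route as the paper: the equivalence of (ii) and (iii) via the coprime-order action on the Frattini quotient (the paper's Proposition~\ref{prop:coprime-aut}), the diagonalisation of $s(\Image(\rhobar|_H))$ via Lemma~\ref{lem:split-basis}(b), and the centraliser/character bookkeeping for (ii)$\Rightarrow$(i) all match the paper's argument. The only cosmetic difference is the choice of which implications to prove explicitly.
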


Concerning the final item, we provide the full list of simple $\FF_p[\Hbar^\ad]$-modules and $\FF_p[\Gbar^\ad]$-modules
that can occur in $\Gamma_\rho/\Phi(\Gamma_\rho)$ in Corollary~\ref{cor:fratt-ad}.
This theorem is applied to infinitesimal deformations that are defined as follows.

\begin{defi}\label{def:inf}
Let $\rho:G \to \GL_2(R)$ be a deformation of~$\rhobar$ as above.
Write $\rho_\inft := \pi \circ \rho$ for $\pi: \GL_2(R) \to \GL_2(R/(\fm_R^2,p))$, where $\fm_R$ is the maximal ideal of~$R$.
We say that $\rho$ is {\em infinitesimal} if $\rho = \rho_\inft$, i.e.\ if $\fm_R^2=0$ and $pR=0$.
\end{defi}
It would also be justified to speak of {\em first-order deformations}; however, we stick to the above terminology.

For the sequel we impose that the profinite group~$G$ satisfies Mazur's finiteness condition $\Phi_p$ (see \cite[\S1.1]{M}).
In that case, there exists a universal deformation
$$\rho^\univ: G \to \GL_2(R^\univ),$$
where $R^\univ$ is the universal deformation ring of $\rhobar$.
The existence of both $R^\univ$ and $\rho^\univ$ follows from \cite[Proposition 1]{M}. Moreover, it also implies that $R^\univ$ is a complete Noetherian local $W(\FF)$-algebra with residue field~$\FF$.
Write $\rho^\univ_\inft := (\rho^\univ)_\inft$, as well as $\Gamma^\univ := \Gamma_{\rho^\univ}$ and $\Gamma_\inft^\univ := \Gamma_{\rho_\inft^\univ}$.
In this notation, we find the following description of the $p$-Frattini quotient associated with the universal deformation of~$\rhobar$.

\begin{cor}\label{cor:univ-inft}
$\Gamma^\univ_\inft \cong \Gamma^\univ/\Phi(\Gamma^\univ)$.
\end{cor}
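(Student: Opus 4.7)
The plan is in two steps: first exhibit a canonical surjection
\[
\pi \colon \Gamma^\univ/\Phi(\Gamma^\univ) \twoheadrightarrow \Gamma^\univ_\inft,
\]
then prove injectivity by appealing to the universal property of $R^\univ$.

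Set $S := R^\univ/(\fm_{R^\univ}^2, pR^\univ)$. Since $\fm_S^2 = 0$ and $pS = 0$, for any $X, Y \in M_2(\fm_S)$ one has $(I+X)(I+Y) = I + X + Y$ and $(I+X)^p = I + pX = I$, so $1 + M_2(\fm_S)$, and in particular its subgroup $\Gamma^\univ_\inft$, is an elementary abelian $p$-group. The canonical projection $R^\univ \twoheadrightarrow S$ then induces a continuous surjection $\Gamma^\univ \twoheadrightarrow \Gamma^\univ_\inft$, which factors through the $p$-Frattini quotient to give $\pi$.

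Injectivity of $\pi$ is the crux. My approach is to exploit the fact that $\rho^\univ_\inft$ is universal among infinitesimal deformations of $\rhobar$: any deformation $\rho_A$ of $\rhobar$ to a complete local Noetherian $W(\FF)$-algebra $A$ with residue field $\FF$ and $\fm_A^2 = 0 = pA$ factors through a unique local $W(\FF)$-algebra map $S \to A$, so the associated $\Gamma_{\rho_A}$ is automatically a quotient of $\Gamma^\univ_\inft$. Hence it suffices to realise $\Gamma^\univ/\Phi(\Gamma^\univ)$ as $\Gamma_{\rho_A}$ for some infinitesimal $\rho_A$: such a realisation yields a surjection $\Gamma^\univ_\inft \twoheadrightarrow \Gamma^\univ/\Phi(\Gamma^\univ)$ inverse to $\pi$ and thereby forces $\pi$ to be an isomorphism.

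The main obstacle will be the construction of this $\rho_A$. I would take $A$ to be the quotient of $R^\univ$ by the closed ideal $\fJ + pR^\univ$, where $\fJ$ is generated by the matrix entries of $m - I$ for $m \in \Phi(\Gamma^\univ)$ (which is a normal subgroup of $\Image(\rho^\univ)$, since the conjugation action of $\rhobar(G)$ on $\Gamma^\univ$ preserves its Frattini subgroup). Because $p$-th powers and commutators of elements of $1 + M_2(\fm_{R^\univ})$ lie in $1 + M_2(\fm_{R^\univ}^2)$, we have $\fJ \subseteq \fm_{R^\univ}^2$. The delicate point is to verify that $A$ is actually infinitesimal (i.e.\ $\fm_A^2 = 0$) and that the image of $\Gamma^\univ$ in $\GL_2(A)$ is exactly $\Gamma^\univ/\Phi(\Gamma^\univ)$, with no extra relations introduced; both rely on the topological generation of $R^\univ$ as a $W(\FF)$-algebra by the matrix entries of $\rho^\univ$, together with the cocycle relation satisfied by $\rho^\univ - \rhobar$, which propagates entries coming from $\ker \rhobar$ across all of $G$.
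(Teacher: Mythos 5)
Your overall strategy is the right one and is in fact the paper's: the surjection $\Gamma^\univ/\Phi(\Gamma^\univ)\twoheadrightarrow\Gamma^\univ_\inft$ is immediate since $\Gamma^\univ_\inft$ is an elementary abelian $p$-quotient of $\Gamma^\univ$, and the whole content of the corollary is to realise $\Gamma^\univ/\Phi(\Gamma^\univ)$ as $\Gamma_{\rho_A}$ for some \emph{infinitesimal} deformation $\rho_A$ and then invoke universality (this is Proposition~\ref{prop:univ-inft} applied to $\rho=\rho^\univ$). The gap is in your construction of $\rho_A$. The ring $A=R^\univ/(\fJ+pR^\univ)$ is in general \emph{not} infinitesimal, and no appeal to trace-generation of $R^\univ$ will make it so. Concretely: suppose $p>2$ and $\rho^\univ$ is itself dihedral with $R^\univ\cong W(\FF)[X]/((1+X)^p-1)$ (the generic outcome of Theorem~\ref{thm:inf-dih-univ}(b) and Proposition~\ref{prop:univ-char}). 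Then $\Gamma^\univ$ is already elementary abelian, so $\Phi(\Gamma^\univ)=1$ and $\fJ=0$, whence $A=\FF[X]/(X^p)$ and $\fm_A^2=(X^2)\neq 0$. Your universality argument only applies to deformations that factor through $S=R^\univ/(\fm_{R^\univ}^2,p)$, so this $A$ gives you nothing; and if you enlarge the ideal to force $\fm_A^2=0$ you land back on $S$ itself, which is circular. The inclusion $\fJ+pR^\univ\subseteq(\fm_{R^\univ}^2,p)$ that you correctly observe goes in the \emph{wrong} direction for your purposes: it shows $\Gamma^\univ_\inft$ is a quotient of $\Gamma^\univ/\Phi(\Gamma^\univ)$ (your easy map $\pi$), not the converse.

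What is actually needed — and what the paper supplies — is a construction of the infinitesimal deformation \emph{from the group extension alone}, not as a quotient of $R^\univ$. One first shows (Lemma~\ref{lem:gamma-pro-p}, Lemma~\ref{lem:indec}, Corollary~\ref{cor:fratt-ad}) that every indecomposable $\FF_p[\Gbar^\ad]$-summand of $\Gamma^\univ/\Phi(\Gamma^\univ)$ occurs in $\ad(\rhobar)_\FF$, and then (Proposition~\ref{prop:infi-lift}) realises the extension $0\to\Gamma^\univ/\Phi(\Gamma^\univ)\to\calG\to\Gbar\to 0$ as the image of a deformation $\rho_V:G\to\GL_2(T)$ with $T=\FF[X_1,\dots,X_r]/(X_iX_j)$; this step requires an $\h^2$-computation to control the extension class (nontrivial only for $p=2$) and the explicit embedding of $\ad(\rhobar)_\FF$ into $1+M_2(X\FF[X]/(X^2))$. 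Only then does universality give $R^\univ\to T$, which factors through $S$ because $T$ really is infinitesimal, yielding the surjection $\Gamma^\univ_\inft\twoheadrightarrow\Gamma^\univ/\Phi(\Gamma^\univ)$. Without some substitute for Proposition~\ref{prop:infi-lift}, your proof does not close.
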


Our second main result states that the universal deformation of~$\rhobar$ is dihedral if and only if all infinitesimal deformations are.

\begin{thm}\label{thm:inf-dih-univ}
\begin{enumerate}[(a)]
\item The following statements are equivalent:
\begin{enumerate}[(i)]
\item \label{item:idu:i} $\rho^\univ$ is dihedral.
\item \label{item:idu:ii} Any deformation $\rho: G \to \GL_2(\FF[X]/(X^2))$ of $\rhobar$ is dihedral.
\item \label{item:idu:iii} Any infinitesimal deformation of $\rhobar$ is dihedral.
\item \label{item:idu:iv} $\rho^\univ_\inft$ is dihedral.
\end{enumerate}
\item If the conditions in~(a) are satisfied, then $R^\univ$ is isomorphic to the universal deformation ring $R^\univ_\chibar$ of~$\chibar$,
as computed in Proposition~\ref{prop:univ-char}.
\end{enumerate}
\end{thm}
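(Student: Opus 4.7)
The plan is to make (i)~$\Leftrightarrow$~(iv) the hub, derive it directly from Theorem~\ref{thm:inf-dih} and Corollary~\ref{cor:univ-inft}, and then close the loop (iv)~$\Rightarrow$~(iii)~$\Rightarrow$~(ii)~$\Rightarrow$~(iv) by specialisation on one side and a separation argument on the other. Part~(b) will fall out of~(i).

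For (i)~$\Leftrightarrow$~(iv), note first that for any infinitesimal deformation $\rho$ the group $\Gamma_\rho$ is elementary abelian of exponent~$p$: since $\fm_R^2 = 0 = pR$, one has $(I+A)^p = I$ for every $A \in M_2(\fm_R)$, so $\Phi(\Gamma_\rho) = 1$ and $\Gamma_\rho = \Gamma_\rho/\Phi(\Gamma_\rho)$. Applying Theorem~\ref{thm:inf-dih} to both $\rho^\univ$ and $\rho^\univ_\inft$ and identifying $\Gamma^\univ/\Phi(\Gamma^\univ)$ with $\Gamma^\univ_\inft$ via Corollary~\ref{cor:univ-inft}, both (i) and (iv) unfold to the single assertion that $\Hbar^\ad$ acts trivially on $\Gamma^\univ_\inft$.

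The chain (iv)~$\Rightarrow$~(iii)~$\Rightarrow$~(ii) is then specialisation: writing $\rho^\univ_\inft \cong \Ind_H^G(\chi)$ for some lift $\chi: H \to (R^\univ_\inft)^\times$ of $\chibar$, any infinitesimal deformation corresponds to a ring map $\phi: R^\univ \to R$ which, since the target has $\fm_R^2 = 0 = pR$, factors through $R^\univ_\inft = R^\univ/(\fm^2, p)$, and the pushforward becomes $\Ind_H^G(\phi \circ \chi)$. For the crucial converse (ii)~$\Rightarrow$~(iv), I would exploit that every $\FF$-linear functional $\ell: \fm_{R^\univ_\inft} \to \FF$ extends, thanks to $\fm^2 = 0$, to a ring homomorphism $R^\univ_\inft \to \FF[X]/(X^2)$; these maps provide enough dual-number specialisations to separate $\fm_{R^\univ_\inft}$ and hence $\Gamma^\univ_\inft$. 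Under (ii), Theorem~\ref{thm:inf-dih} applied to each such specialisation gives trivial $\Hbar^\ad$-action on every pushforward of $\Gamma^\univ_\inft$, and the separation then forces $(\sigma-1)(\gamma) = 0$ in $\Gamma^\univ_\inft$ for all $\sigma \in \Hbar^\ad$ and all $\gamma$, yielding (iv).

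For part~(b): under (i), every deformation of $\rhobar$ is a specialisation of the dihedral $\rho^\univ$ and is therefore itself dihedral, so the deformation functor of $\rhobar$ coincides with the dihedral deformation functor; since the latter is represented by $R^\univ_\chibar$ of Proposition~\ref{prop:univ-char}, universality forces $R^\univ \cong R^\univ_\chibar$. The subtlest step I expect is (ii)~$\Rightarrow$~(iv): one has to track the compatibility of the $\Hbar^\ad$-action set up in diagram~\eqref{eq:seq} (including the choice of splitting~$s$) under pushforwards along $R^\univ_\inft \to \FF[X]/(X^2)$, and verify that the linear functionals on $\fm_{R^\univ_\inft}$ really do supply enough dual-number specialisations to detect the full $\Hbar^\ad$-module structure on $\Gamma^\univ_\inft$.
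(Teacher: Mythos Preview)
Your proof is correct, and the overall architecture matches the paper's: the equivalence (i)~$\Leftrightarrow$~(iv) via Theorem~\ref{thm:inf-dih} and Corollary~\ref{cor:univ-inft} is exactly what the paper uses for its implication (iv)~$\Rightarrow$~(i), and your specialisation chain (iv)~$\Rightarrow$~(iii)~$\Rightarrow$~(ii) is the paper's trivial implications run in reverse.

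The genuine difference is in the hard step. The paper proves (ii)~$\Rightarrow$~(iii) \emph{structurally}: given an arbitrary infinitesimal deformation~$\rho$, it decomposes $\Gamma_\rho$ into indecomposable $\FF_p[\Gbar^\ad]$-summands via Corollary~\ref{cor:fratt-ad}, then invokes Proposition~\ref{prop:infi-lift} to realise each summand individually as the image of a dual-number deformation, to which hypothesis~(ii) applies. You instead prove (ii)~$\Rightarrow$~(iv) by a \emph{separation} argument on the cotangent space: each $\FF$-linear functional $\ell$ on $\fm_{R^\univ_\inft}$ yields a local $\FF$-algebra map to $\FF[X]/(X^2)$, and since these jointly embed $\fm_{R^\univ_\inft}$ into a product of copies of~$\FF$, the entrywise vanishing of $s(\sigma)As(\sigma)^{-1}-A$ for $A\in M_2(\fm_{R^\univ_\inft})$ can be tested on all such pushforwards. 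Your route is slightly more economical at this step (no explicit indecomposable decomposition), though it does not actually bypass Proposition~\ref{prop:infi-lift}, since that is already baked into Corollary~\ref{cor:univ-inft}. The paper's route, in turn, makes the module-theoretic content more visible and handles the $p=2$ non-split extension case uniformly.

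For part~(b) your functorial phrasing is fine, but note that ``the dihedral deformation functor is represented by $R^\univ_\chibar$'' hides exactly the check the paper does explicitly: one must verify that $\chi\mapsto\Ind_H^G(\chi)$ gives a bijection between deformations of~$\chibar$ and dihedral deformations of~$\rhobar$, which the paper establishes by writing down mutually inverse maps $R^\univ_\chibar\leftrightarrows R^\univ$ and using that $\chibar\neq\chibar^\sigma$ to pin down which summand of $\rho|_H$ lifts~$\chibar$. Your concern about compatibility of the splitting~$s$ under pushforward is well placed; it is resolved by Lemma~\ref{lem:split-basis}(b), which lets you transport the diagonal Teichm\"uller splitting along any local $W(\FF)$-algebra map.
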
 

The main step in the proof of the theorem is to realise any group extension of $\Image(\rhobar)$ by an $\FF_p[\Gbar^\ad]$-module that can occur
in the $p$-Frattini quotient of some $\Gamma_\rho$ as the image of an infinitesimal deformation of~$\rhobar$ (see Proposition~\ref{prop:infi-lift}).
We also include an alternative proof of Theorem~\ref{thm:inf-dih-univ} using Generalised Matrix Algebras, which also works with coefficients in
a finite extension of $\QQ_p$ (see Theorem~\ref{thm:altpf}).

\subsection{Number theory results}\label{sec:intro-nt}

Let $K$ be a number field, let $G = G_K = \Gal(\Kbar/K)$ be its absolute Galois group, and let $\rhobar = \Ind_H^G(\chibar)$ be as before.
For a finite set $S$ of places of~$K$, denote by $\rho_S^\univ$ the universal deformation of~$\rhobar$ relatively unramified outside~$S$. 
Denote the \emph{constant determinant} counterpart of~$\rho_S^\univ$ by $(\rho_S^\univ)^0$.
So $\det((\rho_S^\univ)^0)$ is the Teichm\"uller lift of $\det \circ \rhobar$.

We need to introduce some further notation.
Let $S_\infty$ be the set of all archimedean places of~$K$, let $S_p$ be the set of all places above~$p$ and let $S_0$ be the set of
finite places explicitly defined in terms of~$\rhobar$ in section~\ref{sec:nt}.
Denote by $\chibar^\sigma$ the conjugate character by any $\sigma \in G \setminus H$ and by $I(\chibar/\chibar^\sigma)$ the induction
of $\chibar/\chibar^\sigma$ from $H$ to $G$ defined over its field of definition.
Furthermore, let $M^\ad$ be the fixed field under the image of the adjoint representation of~$\rhobar$, that is, $\Gal(M^\ad/K) = \Gbar^\ad$.
Denote by $A(M^\ad)$ the class group of~$M^\ad$.
We can now state our main result in this set-up, the representation-theoretic backbone of which is Theorem~\ref{thm:inf-dih}.

\begin{thm}\label{thm:nt-dih}
Let $S$ be a finite set of places of~$K$ such that
$$ S_\infty \subseteq S, \;\;\; S \cap S_p = \emptyset, \textnormal{ and } S \cap S_0 = \emptyset.$$
Assume also that the following condition holds:
$$\Hom_{\FF_p[\Gbar^\ad]}(A(M^{\ad})/p A(M^{\ad}),I(\chibar/\chibar^{\sigma})) =0.$$
If $p=2$, assume in addition that $M^\ad$ is totally imaginary.

Then $\rho^{\univ}_S$ is a dihedral deformation of~$\rhobar$.
\end{thm}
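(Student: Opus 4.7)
The strategy is to combine the representation-theoretic criterion of Part~(1) with a class-field-theoretic computation of the relevant $p$-Frattini quotient. Applying Theorem~\ref{thm:inf-dih-univ} to $G = G_{K,S}$, which satisfies Mazur's condition $\Phi_p$ since $S$ is finite, reduces the claim to showing that every infinitesimal deformation of $\rhobar$ relatively unramified outside~$S$ is dihedral. By Theorem~\ref{thm:inf-dih}, this is equivalent to the triviality of the $\Hbar^\ad$-action on $\Gamma_{(\rho^{\univ}_{S})_{\inft}}$, which, being infinitesimal, already coincides with its own $p$-Frattini quotient. Invoking Corollary~\ref{cor:fratt-ad}, the only non-trivial $\FF_p[\Hbar^\ad]$-isotypic components that could occur there are the eigenspaces on which $\Hbar^\ad$ acts through $\chibar/\chibar^\sigma$ or $\chibar^\sigma/\chibar$; so the task is to show that these two eigenspaces vanish.

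The next step is to translate this into a class-group statement. Let $K' = \Kbar^H$ and let $S'$ be the set of places of $K'$ above~$S$; then $\Gamma_{(\rho^{\univ}_{S})_{\inft}}$ is a quotient of the exponent-$p$ part of $G_{K',S'}^{\ab}$. Since $\chibar/\chibar^\sigma$ takes values in $\FF^\times$, the group $\Hbar^\ad = \Gal(M^{\ad}/K')$ has order prime to $p$, so inflation-restriction from $K'$ to $M^{\ad}$ is exact on the relevant eigenspaces and identifies the $\chibar/\chibar^\sigma$-eigenspace of $\Gamma_{(\rho^{\univ}_{S})_{\inft}}$ with the $\chibar/\chibar^\sigma$-eigenspace of the Galois group of the maximal exponent-$p$ abelian extension of $M^{\ad}$ unramified outside the places of $M^{\ad}$ above~$S$. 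Class field theory expresses this as an eigenspace of an $S$-ray class group of $M^{\ad}$ modulo~$p$. The local hypotheses now prune this object: $S \cap S_p = \emptyset$ eliminates the (otherwise infinite) local unit contributions at primes above~$p$; $S \cap S_0 = \emptyset$, with $S_0$ defined precisely as the tame primes at which the local units support the $\chibar/\chibar^\sigma$-eigenspace, removes the remaining tame contributions; and the hypothesis that $M^{\ad}$ is totally imaginary when $p=2$ removes the archimedean contribution. What remains is the $\chibar/\chibar^\sigma$-eigenspace of $A(M^{\ad})/pA(M^{\ad})$ (and symmetrically the $\chibar^\sigma/\chibar$-eigenspace, obtained by applying $\sigma$).

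Finally, by Frobenius reciprocity,
$$\Hom_{\FF_p[\Gbar^\ad]}\!\bigl(A(M^{\ad})/pA(M^{\ad}),\,I(\chibar/\chibar^{\sigma})\bigr)\;=\;\Hom_{\FF_p[\Hbar^\ad]}\!\bigl(A(M^{\ad})/pA(M^{\ad}),\,\chibar/\chibar^\sigma\bigr),$$
so the displayed hypothesis is exactly the vanishing of the $\chibar/\chibar^\sigma$-eigenspace (and hence also of the $\sigma$-conjugate eigenspace) of $A(M^\ad)/pA(M^\ad)$. Thus $\Hbar^\ad$ acts trivially on $\Gamma_{(\rho^{\univ}_{S})_{\inft}}$, and Theorems~\ref{thm:inf-dih} and~\ref{thm:inf-dih-univ} yield the dihedrality of $\rho^{\univ}_{S}$. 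The principal technical obstacle is the second paragraph: one must carefully match the local Frattini-quotient contributions at each tame prime in $S'$ (tame inertia, local units, decomposition-group action) and at the archimedean places when $p=2$ against the combinatorial definition of $S_0$ and the totally-imaginary hypothesis, so that what survives really is just the global class group. The rest of the argument is essentially formal.
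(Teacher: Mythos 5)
Your outline reproduces the paper's own strategy: reduce via Theorem~\ref{thm:inf-dih-univ}, Corollary~\ref{cor:univ-inft} and Corollary~\ref{cor:fratt-ad} to showing that $I(\chibar/\chibar^\sigma)$ does not occur in the $p$-Frattini quotient $\calG'$ of $\Gamma_{\rho^\univ_S}$; realise $\calG'$ as a quotient of $\calG = G^{\ab}_{M^\ad,S}/(G^{\ab}_{M^\ad,S})^p$; and kill the relevant eigenspace using the class field theory sequence (Proposition~\ref{prop:CL-Ind}), the definition of $S_0$, and the class-group hypothesis (Proposition~\ref{dimprop}). However, the opening assertion of your second paragraph is false: $\Gamma_{(\rho^{\univ}_{S})_{\inft}}$ is \emph{not} a quotient of the exponent-$p$ part of $G^{\ab}_{K',S'}$ with $K'=L$. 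If it were, the theorem would hold with no hypotheses whatsoever, since the conjugation action of $\Hbar^\ad$ on any quotient of the abelianisation of $G_L$ is induced by inner automorphisms of $G_L$ and hence trivial. The group $\Gamma$ is abelian over $M^{\rhobar}$, not over $L$ (its failure to be abelian over $L$ is precisely the non-dihedrality being probed). The correct descent, which is the paper's, uses that $C=\Gal(M^{\rhobar}/M^{\ad})$ has order prime to $p$ and acts on $\calG'$ through scalars, so $\Gal(M^{\rhobar,\univ}/M^{\ad})\cong C\times \calG'$ and $\calG'$ is the Galois group of an abelian exponent-$p$ extension of $M^{\ad}$ unramified outside $S$ (the unramifiedness again uses coprimality of the orders). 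Your ``inflation--restriction from $K'$ to $M^{\ad}$'' can be turned into a correct cohomological substitute for this, but not starting from $G^{\ab}_{L,S'}$.

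The step you defer as ``the principal technical obstacle'' is in fact the heart of the proof and cannot be absorbed into the definition of $S_0$: the set $S_0$ is given in the paper by explicit splitting and cyclotomic conditions ($S_1\cup S_2\cup S_3$), not as ``the primes at which local units support the eigenspace,'' so asserting the latter as its definition assumes exactly what must be proved. Concretely, one must show for each finite $q\in S$ (automatically prime to $p$) that $\Hom_{\FF_p[D_q]}\bigl(C(\chi_p^{(q)})_{\FF_p},\, I(\chibar/\chibar^{\sigma})|_{D_q}\bigr)=0$, which after Frobenius reciprocity requires the case analysis of Proposition~\ref{dimprop}: $q$ split in $L$ (giving $S_1$), $q$ non-split with $D_q\cong\ZZ/2\ZZ$ (giving $S_2$), and $q$ non-split with $D_q\cong\ZZ/2\ZZ\times\ZZ/2\ZZ$ (giving $S_3$), plus the archimedean and global-unit discussion for $p=2$. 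With the descent corrected and this verification supplied, your argument coincides with the paper's. (A minor further point: the relevant Galois group is $\Gal(M^{\rhobar}(S)/K)$, not $G_{K,S}$, since the deformations are only \emph{relatively} unramified outside $S$.)
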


Note that in the basic case $S=S_\infty$ the set $S_0$ does not play any role, and the theorem essentially follows from Theorem~\ref{thm:inf-dih}.
More generally, the set $S_0$ takes care of the maximal elementary abelian $p$-extension unramified outside~$S$ of~$M^\ad$, in the sense that the representation $I(\chibar/\chibar^{\sigma})$ cannot occur in the corresponding Galois group viewed as
$\FF_p[\Gbar^\ad]$-module. The definition of the set $S_0$ is thus rooted in global class field theory.

In Remark~\ref{rem:nt-dih} (see also the Remarks \ref{ramrem}, \ref{classgrprem}, \ref{prem}, \ref{rem:tot-im}), we discuss in how far the hypotheses imposed in Theorem~\ref{thm:nt-dih} are necessary for the conclusion to hold.

In Corollaries \ref{corstr} and~\ref{maincor}, the structure of the universal deformation ring and its variant with `constant determinant' are computed (the latter only for $p>2$) under the assumptions of Theorem~\ref{thm:nt-dih}. The main point is that all dihedral deformations of $\rhobar$ are inductions of $1$-dimensional deformations of the character~$\chibar$.

\subsection{Application to the Boston--Fontaine--Mazur Conjecture}

Recall that Boston's strengthening of the unramified Fontaine--Mazur conjecture (\cite[Conjecture $2$]{bos}) states the following (see \cite{AC} as well):
\begin{quote}{\em Let $N$ be a number field, $\FF$ be a finite field of characteristic $p$ and $\rho : G_N \to \GL_n(\FF)$ be a continuous absolutely irreducible Galois representation. Let $S$ be a finite set of primes of $N$ not containing any prime of $N$ lying above $p$. Then the universal deformation of $\rho$ relatively unramified outside $S$ (defined in the same way as in the remark after Lemma~\ref{deflem}) has finite image.}
\end{quote}

\begin{cor}\label{cor:FM}
Let $S$ be a finite set of primes of~$K$. If the conditions given in Theorem~\ref{thm:nt-dih} hold, then Boston's strengthening of the unramified Fontaine--Mazur conjecture is true for the tuple $(K,S,\rhobar)$.
\end{cor}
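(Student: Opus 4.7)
The plan is to leverage Theorem~\ref{thm:nt-dih} directly: under its hypotheses, $\rho^\univ_S$ is dihedral, so there is a lift $\chi: H \to (R^\univ_S)^\times$ of $\chibar$ with $\rho^\univ_S \cong \Ind_H^G(\chi)$. The image of an induction $\Ind_H^G(\chi)$ is finite if and only if the image of $\chi$ is finite, and writing $\chi = \tilde\chibar \cdot \chi'$ with $\tilde\chibar$ the Teichm\"uller lift of $\chibar$ (which has finite image) and $\chi': H \to 1 + \fm_{R^\univ_S}$ a pro-$p$ character, the task reduces to showing that $\chi'$ has finite image.

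Let $L$ be the fixed field of $H$, so $H = G_L$. The first technical step is to show that $\chi'$ is genuinely unramified at every prime $w$ of $L$ above a place $v \notin S$. By relative unramification, $\chi|_{I_w}$ factors through $\chibar(I_w)$, which is cyclic of order coprime to~$p$ since $\chibar$ takes values in $\FF^\times$. Combined with the factorisation $\chi|_{I_w} = \tilde\chibar|_{I_w} \cdot \chi'|_{I_w}$ and the fact that $\chi'$ lands in the pro-$p$ group $1 + \fm_{R^\univ_S}$, this forces $\chi'|_{I_w} = 1$. Since $S \cap S_p = \emptyset$, the same holds at all primes above~$p$, so $\chi'$ is unramified outside the finite set $T$ of primes of $L$ lying above~$S$.

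Therefore $\chi'$ factors through $\Gamma := \Gal(L^{(p), \ab}_T/L)$, and the corollary follows once $\Gamma$ is shown to be finite. $\Gamma$ is a finitely generated $\ZZ_p$-module, being a quotient of $\Gal(L^{(p), \ab}_{T \cup S_p \cup S_\infty}/L)$, to which Mazur's $\Phi_p$-condition applies. To force the $\ZZ_p$-rank to be zero I would use the classical tame-inertia argument: a surjection $\Gamma \twoheadrightarrow \ZZ_p$ would produce a $\ZZ_p$-extension $M/L$ unramified outside~$T$; at each $w \in T$, wild inertia is pro-$\ell$ with $\ell \neq p$, hence maps trivially into $\ZZ_p$, while tame inertia would have to map into the $(N(w)-1)$-torsion of $\ZZ_p$, also trivial. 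Then $M/L$ would be unramified everywhere, contradicting the finiteness of the Hilbert class field of $L$. The conceptual heavy lifting is done by Theorem~\ref{thm:nt-dih}, which reduces the two-dimensional problem to a one-dimensional one; the only delicate step within the present argument is the conversion of relative unramification of $\rho^\univ_S$ into genuine unramification of $\chi'$, which works precisely because $|\FF^\times|$ is coprime to~$p$.
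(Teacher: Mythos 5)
Your proof is correct and follows essentially the same route as the paper: Theorem~\ref{thm:nt-dih} gives that $\rho^\univ_S$ is induced from a character, and finiteness of the image then reduces to the class-field-theoretic fact that the maximal abelian pro-$p$ extension of $L$ unramified outside a set of primes containing none above $p$ is finite. The only cosmetic difference is that the paper cites Theorem~\ref{thm:inf-dih-univ}(b), Proposition~\ref{prop:univ-char} and the discussion preceding Corollary~\ref{corstr} for the unramifiedness and finiteness of the character, whereas you verify these two points directly (via the coprime-to-$p$ order of $\chibar(I_w)$ and the tame-inertia argument), which amounts to the same content.
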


As an illustration of Corollary~\ref{cor:FM}, we specialise it to a couple of examples which we describe now. Let $\FF_{81}$ be the degree $4$ extension of~$\FF_3$.
The number field $L := \QQ(\sqrt{-3},\sqrt{-239}) = \QQ(\sqrt{-3},\sqrt{717})$ has class number $15$ (see \cite[\href{http://www.lmfdb.org/NumberField/4.0.514089.1}{Global Number Field 4.0.514089.1}]{lmfdb}).
Let $M$ be its maximal unramified abelian $5$-extension. Note that the class number of both $\QQ(\sqrt{717})$ and $\QQ(\sqrt{-3})$ is~$1$.
Therefore, $M$ is a Galois extension of both $\QQ(\sqrt{717})$ and $\QQ(\sqrt{-3})$ with $\Gal(M/\QQ(\sqrt{-3})) \simeq \Gal(M/\QQ(\sqrt{717})) \simeq D_5$.
In these cases, Corollary~\ref{cor:FM} gives us the following results:

\begin{cor}\label{cor:exm}
Let $\chi : \Gal(M/\QQ(\sqrt{-3},\sqrt{-239})) \to \FF_{81}^*$ be a non-trivial continuous character.
\begin{enumerate}[(a)]
\item\label{cor:exm:a} Let $\rhobar_1 : G_{\QQ(\sqrt{717})} \to \GL_2(\FF_{81})$ be the representation $\Ind_{\Gal(M/L)}^{\Gal(M/\QQ(\sqrt{717}))}(\chi)$.
Let $S$ be a finite set of primes of $\QQ(\sqrt{717})$ such that $S_{\infty} \subseteq S$, $S$ does not contain any prime above~$3$,
and all the finite primes contained in $S$ are split in $L$ but not completely split in $M$.
Then the universal deformation of $\rhobar_1$ relatively unramified outside $S$ is dihedral and has finite image.
\item\label{cor:exm:b} Let $\rhobar_2 : G_{\QQ(\sqrt{-3})} \to \GL_2(\FF_{81})$ be the representation $\Ind_{\Gal(M/L)}^{\Gal(M/\QQ(\sqrt{-3}))}(\chi)$.
Let $S$ be a finite set of primes of $\QQ(\sqrt{-3})$ such that $S_{\infty} \subseteq S$, $S$ does not contain any prime above~$3$,
and all the finite primes contained in $S$ are split in~$L$ but not completely split in $M$.
Then the universal deformation of $\rhobar_2$ relatively unramified outside $S$ is dihedral and has finite image.
\end{enumerate}
\end{cor}

Note that Boston's conjecture has been proved by Allen and Calegari for a certain class of representations of the absolute Galois groups of totally real fields (see \cite[Corollary 3]{AC}).
However, the two cases considered above do not satisfy the hypotheses of \cite[Corollary 3]{AC} and, hence, they give us new evidence towards Boston's strengthening of the unramified Fontaine--Mazur conjecture.

In section~\ref{sec:examples}, we report on some computer calculations that we carried out to obtain examples when the universal relatively
unramified deformation of $\rhobar$ is dihedral, and others when this is not the case.
Those examples for which the universal relatively unramified deformation is dihedral also provide explicit examples
in favour of Boston's strengthening of the unramified Fontaine--Mazur conjecture.

\subsection{Application to a question of Greenberg and Coleman}

A famous question due to R.~Greenberg asks when the $p$-adic Galois representation attached to a $p$-ordinary cuspidal eigenform $f$ of weight $k \geq 2$ is a sum of two characters when restricted to a decomposition group at $p$ (see \cite[Question 1]{GV04}).
An equivalent form of this question due to Coleman can also be found in \cite{Col} (see \cite[Section 1]{CWE} for more details).
There has been a lot of work on this question in the past; see \cite[Section 1]{CWE} for a brief summary of the work centred around this question.
An answer to this question was found under certain hypotheses in \cite{CWE} (\cite[Theorem 1.3.1, Corollary 1.3.2]{CWE}). 
We prove a similar theorem:

\begin{thm}\label{thm:CM2}
Assume Set-up~\ref{setup:app} and suppose $\Hom_{\FF_p[\Gbar^\ad]}(A(M^{\ad})/pA(M^{\ad}) ,  I(\chibar/\chibar^{\sigma})) = 0$ (cf.\ Theorem~\ref{thm:nt-dih}).
Let $f$ be a classical newform of tame level $\Gamma_1(M)$ such that
\begin{enumerate}
\item $\rhobar_f \simeq \rhobar$,
\item $\rho_f|_{G_{\QQ_p}}$ is a sum of two characters,
\item if $\ell | M$ and $\rhobar|_{G_{\QQ_{\ell}}} = \eta \oplus \psi$, then $\eta\psi^{-1} \neq \omega^{(\ell)}_p, (\omega^{(\ell)}_p)^{-1}$,
where $\omega^{(\ell)}_p$ is the mod $p$ cyclotomic character of $G_{\QQ_{\ell}}$,
\item if $\ell | M$, $p | \ell-1$ and $\rhobar|_{G_{\QQ_{\ell}}}$ is reducible, then one of the following holds:
\begin{enumerate}
\item $\ell$ is split in $L$,
\item $\ell$ is not split in $L$, $\det(\rho_f|_{I_{\ell}}) = \widehat{\det(\rhobar|_{I_{\ell}})}$ and $\ell \nmid M/N$.
\end{enumerate}
\end{enumerate}
Then $f$ has CM by $L$.
\end{thm}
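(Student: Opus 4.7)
The plan is to apply Theorem~\ref{thm:nt-dih} to $\rho_f$, viewed as a deformation of $\rhobar$, and conclude that $\rho_f$ itself is dihedral; since a classical newform whose $p$-adic Galois representation is induced from a character of $G_L$ has CM by $L$, this finishes the proof.

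First, I would set $S := S_\infty \cup \{\ell : \ell \mid M\}$ and verify the hypotheses of Theorem~\ref{thm:nt-dih} for this choice. The inclusion $S_\infty \subseteq S$ is automatic, and $S \cap S_p = \emptyset$ holds because $M$ is the tame level, so $p \nmid M$. The $\Hom$-vanishing is assumed in the statement. The delicate hypothesis is $S \cap S_0 = \emptyset$: unwinding the explicit definition of $S_0$ from section~\ref{sec:nt} prime by prime, hypothesis (3) of the theorem rules out the primes that would enter $S_0$ via a ``cyclotomic twist'' obstruction, and hypothesis (4) deals with the subtler case $\ell \mid M$, $p \mid \ell-1$, $\rhobar|_{G_{\QQ_\ell}}$ reducible, splitting it into the subcase (4a) when $\ell$ is split in $L$ and the subcase (4b) — a determinant/conductor matching case — otherwise.

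Second, I would verify that $\rho_f$ is indeed a deformation of $\rhobar$ relatively unramified outside $S$ in the sense of Lemma~\ref{deflem}. For $\ell \nmid Mp$ this is automatic since $f$ has tame level $\Gamma_1(M)$. At $\ell \mid M$ the local type of $\rho_f$ matches what the deformation condition at $\ell$ prescribes once hypotheses (3) and (4) are in force; the condition $\ell \nmid M/N$ in (4b) together with the matching $\det(\rho_f|_{I_\ell}) = \widehat{\det(\rhobar|_{I_\ell})}$ forces minimal conductor at $\ell$. At primes above $p$, the crucial input is hypothesis (2): combined with $\rhobar$ being unramified at $p$ (from Set-up~\ref{setup:app}), the splitting $\rho_f|_{G_{\QQ_p}} = \eta \oplus \psi$ forces the restriction to be relatively unramified, so that no prime above $p$ needs to be added to $S$.

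Once these verifications are in hand, $\rho_f$ factors through $\rho_S^\univ$. By Theorem~\ref{thm:nt-dih} the latter is dihedral, and by Corollary~\ref{maincor} every deformation relatively unramified outside $S$ arises as an induction $\Ind_H^G(\chi)$ of some lift $\chi$ of $\chibar$; hence $\rho_f \cong \Ind_{G_L}^{G_\QQ}\chi$ and $f$ has CM by $L$. The principal obstacle I anticipate is the second step, specifically the local analysis at primes $\ell \mid M$ with $p \mid \ell-1$ and $\rhobar|_{G_{\QQ_\ell}}$ reducible: a cyclotomic twist could otherwise push the inertial image of $\rho_f$ outside the subgroup allowed by the ``relatively unramified'' condition, and the split/non-split dichotomy of hypothesis (4), together with the conductor constraint in (4b), are precisely what is needed to reconcile the local behaviour of $\rho_f$ at such $\ell$ with both the exclusion $\ell \notin S_0$ and the local deformation condition defining $\rho_S^\univ$.
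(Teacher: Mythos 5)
There is a genuine gap, and it sits exactly where your argument leans on Theorem~\ref{thm:nt-dih}: that theorem requires $S \cap S_p = \emptyset$ and that the deformation be \emph{relatively unramified} outside~$S$, but $\rho_f$ for a classical newform is in general ramified at~$p$ (already its determinant involves a positive power of the $p$-adic cyclotomic character in weight $\ge 2$). Your claim that hypothesis (2) ``forces the restriction to be relatively unramified'' at $p$ is false: a direct sum of two characters of $G_{\QQ_p}$ can perfectly well be ramified, and Set-up~\ref{setup:app} does not assume $\rhobar$ unramified at~$p$ --- it only assumes $\rhobar|_{G_{\QQ_p}}$ is a sum of two \emph{distinct} characters (the unramified-at-$p$ hypothesis you cite belongs to Set-up~\ref{setup:modularity}, a different section). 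For this reason the paper does not invoke Theorem~\ref{thm:nt-dih} but Proposition~\ref{app:dih}: one puts $p$ \emph{into} $S$ and works with the quotient $G_S$ of $\Gal(M^{\rhobar}(T)/\QQ)$ in which the image of $G_{\QQ_p}$ is abelian (the nearly ordinary $p$-locally split condition). Hypothesis (2) of the theorem is used precisely to show that $\rho_f$ factors through $G_S$; then assumptions \ref{app:split} and \ref{app:distinct} of Set-up~\ref{setup:app} ($p$ split in $L$, the two local characters distinct) force any infinitesimal deformation factoring through $G_S$ to be completely split at the primes above~$p$, which is how the argument reduces to Proposition~\ref{dimprop} with $p$ removed from the ramification set. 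Without this device your argument cannot get off the ground at~$p$.

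A second, smaller, inaccuracy: it is not true that $S \cap S_0 = \emptyset$ for your choice of~$S$. The primes $\ell \mid M$ with $p \mid \ell-1$, $\ell$ ramified in $L$ and $|\ad(\rhobar)(D_\ell)| = 2$ genuinely lie in $S_2 \subseteq S_0$. What one proves instead is that at each such $\ell$ the representation $\rho_f$ is relatively unramified, i.e.\ $\rho_f(I_\ell) \simeq \rhobar(I_\ell)$; this uses the local Langlands correspondence, the fact that $M$ is the Artin conductor of $\rho_f$ together with $\ell \nmid M/N$, and the determinant condition in (4b). Hypothesis (3) kills $S' \cap S_1$, and it is assumption \ref{app:irred} of the Set-up (not a hypothesis of the theorem) that kills $S' \cap S_3$. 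You correctly sense that (4b) is a conductor/determinant matching condition, but its logical role is to verify the factorisation through $G_S$ at the primes of $S' \cap S_0$, not to make $S \cap S_0$ empty.
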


Note that Theorem~\ref{thm:CM2} implies \cite[Corollary $1.3.2$]{CWE} (see Remark~\ref{rem:CM} for more details).
To prove Theorem~\ref{thm:CM2}, we identify a quotient $G^{p-\ab}_S$ of $G_{\QQ}$ such that if $f$ is an eigenform satisfying the conditions of Theorem~\ref{thm:CM2}, then $\rho_f$ factors through $G^{p-\ab}_S$. 
Under the hypotheses of Theorem~\ref{thm:CM2}, we prove that the universal deformation of $\rhobar$ for the group $G^{p-\ab}_S$ is dihedral by verifying the criteria proved in Theorem~\ref{thm:inf-dih-univ}.
The theorem follows directly from this.

\subsection{Modularity results}\label{sec:intro-modularity}

We apply our number theoretic results towards a comparison between a minimal universal deformation ring and a Hecke algebra in parallel weight one.
The main point is that, when $K$ is totally real, irreducible totally odd induced representations of finite order complex-valued characters are afforded by cuspidal Hilbert modular forms of parallel weight one that are induced from the corresponding Hecke characters.

We keep the objects from the previous subsection and impose several additional hypotheses that are natural in view of the application to Hilbert modular forms and the previous results:
\begin{enumerate}
\item $p>2$.
\item $K$ is totally real.
\item The character $\chibar$ is such that $\rhobar$ is totally odd.
\item $\rhobar$ is unramified at all places above~$p$.
\item If $\rhobar$ is ramified at a prime $\ell$ of $K$ and $\rhobar|_{G_{K_{\ell}}}$ is not absolutely irreducible, then $\dim((\rhobar)^{I_{\ell}}) =1$ where $(\rhobar)^{I_{\ell}}$ denotes the subspace of $\rhobar$ fixed by the inertia group $I_{\ell}$ at $\ell$.
\item $\Hom_{\FF_p[\Gbar^\ad]}(A(M^{\ad})/p A(M^{\ad}),I(\chibar/\chibar^{\sigma})) =0$ (cf.\ Theorem~\ref{thm:nt-dih}).
\end{enumerate}

We shall restrict to {\em minimal deformations} (see Definition~\ref{mindef}), defined in the same way as in \cite[Definition $3.1$]{CG}.
We find that there is an explicit choice for the set of places~$S$ such that $(\rho_S^\univ)^0$ is dihedral and $R^\minim = (R_S^\univ)^0$ (see Proposition~\ref{prop:min-univ}),
where $R^\minim$ is the universal minimal deformation ring of~$\rhobar$ and $(R_S^\univ)^0$ is the ring underlying $(\rho_S^\univ)^0$.

Using that $(\rho_S^\univ)^0$ is dihedral, from Proposition~\ref{prop:Rmodular} we obtain a quotient $\TT_\dih$ of the anemic $W(\FF)$-Hecke algebra acting on parallel weight one Hilbert modular forms over~$K$,
which comes equipped with a Galois representation $\rho_\dih: G_K \to \GL_2(\TT_\dih)$ and a ring homomorphism $\psi : \TT_\dih \to (R_S^\univ)^0$ such that $\psi \circ \rho_\dih = (\rho_S^\univ)^0$. The determinant of $\rho_\dih$ is the Teichm\"uller lift of $\det \circ \rhobar$.
It turns out that $\rho_\dih$ is a minimal deformation of~$\rhobar$ and this leads to our main modularity result.

\begin{thm}\label{minthm}
The map $R^\minim \to \TT_\dih$ induced from the minimal deformation property of $\rho_\dih$ is an isomorphism.
\end{thm}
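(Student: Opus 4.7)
The plan is to show that the map $\phi : R^\minim \to \TT_\dih$ in the statement and the map $\psi : \TT_\dih \to (R_S^\univ)^0 = R^\minim$ supplied by Proposition~\ref{prop:Rmodular} (using the identification of Proposition~\ref{prop:min-univ}) are mutually inverse ring homomorphisms. This reduces to two observations: $\psi \circ \phi$ is the identity on $R^\minim$ by a universal-property argument, and $\phi$ is surjective by a trace computation.

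First I would observe that the composition $\psi \circ \phi : R^\minim \to R^\minim$ pushes the universal minimal deformation $\rho^\minim$ to $\psi \circ \rho_\dih = (\rho_S^\univ)^0$, which equals $\rho^\minim$ under the identification $R^\minim = (R_S^\univ)^0$ of Proposition~\ref{prop:min-univ}. The uniqueness part of the universal property of $R^\minim$ then forces $\psi \circ \phi = \id_{R^\minim}$. In particular $\phi$ is a split injection and $\psi$ is surjective.

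For the surjectivity of $\phi$, I would use that $\TT_\dih$, being a quotient of the anemic $W(\FF)$-Hecke algebra, is topologically generated by the Hecke operators $T_\fq$ as $\fq$ ranges over primes of $K$ outside a suitable finite set (for example, outside $S \cup S_p$ together with the ramification primes of $\rhobar$). The construction of $\rho_\dih$ in Proposition~\ref{prop:Rmodular} satisfies $\tr \rho_\dih(\Frob_\fq) = T_\fq$ for all such~$\fq$; combined with $\phi \circ \rho^\minim \cong \rho_\dih$ this yields $\phi\bigl(\tr \rho^\minim(\Frob_\fq)\bigr) = T_\fq$ for each such $\fq$. Since $R^\minim$ is compact and $\phi$ continuous, the image of $\phi$ is a closed subring of $\TT_\dih$; containing every $T_\fq$, it must equal the whole of $\TT_\dih$. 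Together with the first step this proves $\phi$ is an isomorphism. The main obstacle is conceptual rather than technical at this stage: essentially all the substance is concentrated in the two cited propositions, which simultaneously identify $R^\minim$ with the dihedral deformation ring $(R_S^\univ)^0$ and realise the corresponding dihedral family of deformations on parallel-weight-one Hilbert modular forms. Once both are in hand, the theorem falls out from juggling the two universal properties.
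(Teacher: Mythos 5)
Your proposal is correct and follows essentially the same route as the paper: the paper proves Theorem~\ref{minthm} by feeding $\rho_\dih$ into the general Theorem~\ref{thm:ReqT}, whose proof consists of exactly your two steps (surjectivity of $\phi$ from the $T_\fq$ generating $\TT_\dih$ as a $W(\FF)$-algebra, and injectivity from $\psi\circ\phi=\id$ via the universal property). The only substantive content of the paper's proof that your write-up takes for granted is the verification that $\rho_\dih$ really \emph{is} a minimal deformation --- i.e.\ that it factors through $\Gal(M^{\rhobar}(S)/K)$ and has determinant the Teichm\"uller lift of $\det\rhobar$, so that Proposition~\ref{prop:min-univ} applies and the map $\phi$ exists at all; since the statement's phrase ``induced from the minimal deformation property of $\rho_\dih$'' presupposes this, your reading is defensible, but a complete proof should include that check.
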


\subsection{Notation and conventions}

We summarise some notation and conventions to be used throughout the paper. More notation is introduced during the text.

For a finite field~$\FF$, denote by $W(\FF)$ the ring of Witt vectors of~$\FF$.
Let $\calC$ be the category of local complete Noetherian $W(\FF)$-algebras $R$ with residue field $\FF$.
The Teichm\"uller lift of an element $x \in \FF$ to $W(\FF)$ (and to any $W(\FF)$-algebra) will be denoted by a hat:~$\hat{x}$.
All representations are assumed to be continuous without explicit mention of this.
For a local ring~$R$, denote by $\fm_R$ its maximal ideal.

Specific objects that are used without explicit mention in the statements of propositions and theorems are collected in `set-ups'.
\begin{setup}
In the entire article, $p$ will denote a fixed prime number and $\FF$ a finite field of characteristic~$p$.
\end{setup}

\subsection{Acknowledgements}
The authors would like to thank Carl Wang-Erickson for useful discussions and encouraging them to include the equi-characteristic case of Theorem~\ref{thm:inf-dih-univ} as well.
They also thank Tarun Dalal for pointing out an error in a previous version of this article.
They also thank the referee for insightful and helpful comments and suggestions.

The majority of this work was done when S.~D. was a postdoc at the University of Luxembourg.
G.~W.\ acknowledges support by the IRP AMFOR at the University of Luxembourg.
This research is also supported by the Luxembourg National Research Fund INTER/ANR/18/12589973 GALF.

\section{Representation theory}\label{sec:rep}

In this section, we develop and prove the representation theory results outlined in section~\ref{sec:intro-rep}.

\subsection{Explicit universal deformations of characters}

Since dihedral representations are induced from characters, we first include a treatment of the universal
deformation of a character.
It can be derived from Mazur's fundamental paper~\cite[\S1.4]{M}, but for the sake of completeness, we include a proof, which is quite simple and explicit.

For $r \in \NN$ and $n$-tuples of positive integers $(e_1,e_2,\dots,e_n)$ we introduce the piece of notation
$$ \calU_{W(\FF),r,(e_1,e_2,\dots,e_n)} := W(\FF)[[X_1,\dots,X_{n+r}]]/((1+X_1)^{p^{e_1}} -1, \dots, (1+X_{n})^{p^{e_n}}-1).$$

\begin{prop}\label{prop:univ-char}
Let $H$ be a profinite group.
We assume that the pro-$p$ group $P = \prod_{i=1}^n \ZZ/p^{e_i}\ZZ \times \ZZ_p^r$ with $e_i \ge 1$ for all $1 \le i \le n$
is the maximal continuous abelian pro-$p$ quotient of~$H$. Let $g_1,\dots,g_{n+r}$ be generators of~$P$ such that
$g_i$ topologically generates $\ZZ/p^{e_i}\ZZ$ for $1 \le i \le n$ and $\ZZ_p$ for $n+1 \le i \le n+r$.

Let $\chibar: H \to \FF^\times$ be a character and denote by $\chibarhat: H \to W(\FF)^\times$ its Teichm\"uller lift.
Define the character
$\psi^\univ: H \to \calU_{W(\FF),r,(e_1,e_2,\dots,e_n)}$
as the composition of the projection $H \twoheadrightarrow P$ and the group monomorphism 
$P \to \big(\calU_{W(\FF),r,(e_1,e_2,\dots,e_n)} \big)^\times$ sending $g_i$ to $1+X_i$ for $i=1,\dots,n+r$.
Also define the {\em universal character}
$$ \chi^\univ := \psi^\univ \cdot \chibarhat.$$

Then $ \calU_{W(\FF),r,(e_1,e_2,\dots,e_n)}$ is the universal deformation ring of~$\chibar$
in the category~$\calC$ and $\chi^\univ$ is the universal deformation.
\end{prop}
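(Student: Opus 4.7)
The plan is to verify the universal property directly by exhibiting a natural bijection between deformations of $\chibar$ to an object $R$ of~$\calC$ and continuous $W(\FF)$-algebra homomorphisms $\calU_{W(\FF),r,(e_1,\dots,e_n)} \to R$, and then identifying the universal element with $\chi^\univ$.

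The first step is to reduce the study of deformations to that of characters valued in $1+\fm_R$. Given any deformation $\chi: H \to R^\times$ of $\chibar$, define $\psi := \chi \cdot \chibarhat^{-1}$; since $\chibarhat$ reduces to $\chibar$ modulo $\fm_R$, the character $\psi$ takes values in $1+\fm_R$, and conversely any such $\psi$ yields a deformation $\chi = \psi \cdot \chibarhat$. The key structural input is that $1+\fm_R$ is an abelian pro-$p$ group: abelian because $R$ is commutative, and pro-$p$ because $p \in \fm_R$ (as $R$ is a local $W(\FF)$-algebra) and each finite-length quotient $1 + \fm_R/\fm_R^k$ is a finite $p$-group. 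Hence the continuous character $\psi$ factors uniquely through the maximal continuous abelian pro-$p$ quotient $P$ of~$H$. Using the presentation of $P$ by the generators $g_i$ with the relations $g_i^{p^{e_i}}=1$ for $1 \le i \le n$, the character $\psi$ is then completely encoded by the tuple $(a_1,\dots,a_{n+r}) \in \fm_R^{n+r}$ with $\psi(g_i)=1+a_i$, subject to the relations $(1+a_i)^{p^{e_i}}=1$ for $i \le n$.

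The second step is to recognise this data as a $W(\FF)$-algebra map out of $\calU_{W(\FF),r,(e_1,\dots,e_n)}$. By the standard universal property of power series over $W(\FF)$, continuous $W(\FF)$-algebra homomorphisms $W(\FF)[[X_1,\dots,X_{n+r}]] \to R$ correspond bijectively to tuples in $\fm_R^{n+r}$ via $X_i \mapsto a_i$; imposing the relations that cut out $\calU_{W(\FF),r,(e_1,\dots,e_n)}$ matches exactly the relations $(1+a_i)^{p^{e_i}}=1$ on admissible tuples. One checks that $\calU_{W(\FF),r,(e_1,\dots,e_n)}$ lies in $\calC$ (it is a quotient of a Noetherian complete local $W(\FF)$-algebra with maximal ideal $(p,X_1,\dots,X_{n+r})$ and residue field $\FF$). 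The universal deformation then corresponds to the tautological tuple $a_i = X_i$, which produces precisely $\chi^\univ = \psi^\univ \cdot \chibarhat$, and the bijection above shows that any deformation $\chi$ over $R$ is the pushforward of $\chi^\univ$ along the unique classifying map $\calU_{W(\FF),r,(e_1,\dots,e_n)} \to R$.

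The only genuine subtlety is verifying that $1+\fm_R$ is pro-$p$ and that every continuous character of $H$ into it factors through~$P$; this rests on $R$ being a complete Noetherian local $W(\FF)$-algebra with residue field of characteristic~$p$, so that $R = \varprojlim R/\fm_R^k$ with each $1+\fm_R/\fm_R^k$ a finite $p$-group. Beyond this point, the argument is a bookkeeping exercise combining the universal property of formal power series with the explicit presentation of~$P$, and the role of the Teichmüller lift is only to trivialise the reduction modulo~$\fm_R$ so that the deformation functor reduces to the pro-$p$ character functor.
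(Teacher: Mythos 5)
Your proposal is correct and follows essentially the same route as the paper's proof: twist by the Teichmüller lift to reduce to a character valued in the pro-$p$ group $1+\fm_R$, factor it through the maximal abelian pro-$p$ quotient $P$, and use the universal property of $W(\FF)[[X_1,\dots,X_{n+r}]]$ together with the relations $(1+X_i)^{p^{e_i}}=1$ to produce the classifying map $X_i \mapsto \psi(g_i)-1$. The only cosmetic difference is that you phrase the argument as a functorial bijection while the paper constructs the classifying homomorphism directly from a given deformation; the content is identical.
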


\begin{proof}
It is a simple check that $\chi^\univ$ is well-defined and indeed a deformation of~$\chibar$.
Let now $R$ be in~$\calC$ and $\chi: H \to R^\times$ a deformation of~$\chibar$.
We set $\psi := \chi \cdot \chibarhat^{-1}$. As the reduction of~$\psi$ is trivial, its image is a pro-$p$ group and thus a quotient of~$P$.
We write this as $\psi: H \twoheadrightarrow P \overset{\pi}{\twoheadrightarrow} \Image(\psi) \subseteq R^\times$.

We define the $W(\FF)$-algebra homomorphism
$$ W(\FF)[[X_1,\dots,X_{n+r}]] \to R, \;\;\; X_1 \mapsto \pi(g_1) - 1, \dots, X_{n+r} \mapsto \pi(g_{n+r}) - 1.$$
The elements $(1+X_1)^{p^{e_1}} -1, \dots, (1+X_n)^{p^{e_n}}-1$ are clearly in its kernel so that we obtain
a $W(\FF)$-algebra homomorphism 
$$ \phi: \calU_{W(\FF),r,(e_1,e_2,\dots,e_n)} \to R.$$

The commutativity of the diagram
$$ \xymatrix@=.5cm{
&&&  \big(\calU_{W(\FF),(e_1,e_2,\dots,e_n)}\big)^\times \ar@{->}[dd]^\phi\\
H \ar@{->>}[rr] \ar@{->}@/^1pc/[urrr]^(.35){\psi^\univ} \ar@{->}@/_1pc/[drrr]_\psi && P  \ar@{->}[ur] \ar@{->}[dr]\\
&&& R^\times \\
}$$
is clear. It implies $\chi = \phi \circ \chi^\univ$. The uniqueness of $\phi$ with this property is clear.
All this together shows the universality of $(\calU_{W(\FF),r,(e_1,e_2,\dots,e_n)},\chi^\univ)$. 
\end{proof}

\subsection{Dihedral representations}

We start by clarifying what we mean by induced and dihedral representations in the special cases we need.

\begin{defi}\label{defi:dihedral}
Let $G$ be a profinite group and $R$ a topological ring. A representation $\rho: G \to  \GL_2(R)$ is called {\em dihedral}
if there is an open index-$2$ subgroup $H \lhd G$ and a character $\chi: H  \to R^\times$ such that $\rho$ is equivalent to
$\Ind_H^G(\chi)_R$, where the free $R$-module of rank~$2$
$$ \Ind_H^G(\chi)_R = \{f: G \to R \textnormal{ map }|\; \forall\,g \in G,\forall\,h \in H: f(hg) = \chi(h)f(g)\}$$
is the {\em induced representation of~$\chi$ from $H$ to~$G$} equipped with the left $G$-action via
$(\tilde{g}.f)(g) = f(g\tilde{g})$ for $g,\tilde{g} \in G$.
\end{defi}

We stress that in the definition we ask the character $\chi$ to be defined over~$R$. This choice may not be standard, but can always
be achieved by extending~$R$. It simplifies working matricially with induced representations.

For the sake of being explicit and making certain proofs more transparent, we quickly describe a matrix representation of $\rho=\Ind_H^G(\chi)_R$.
Let us write $G = H \sqcup \sigma H$ and put $\chi^\sigma(h) = \chi(\sigma h\sigma^{-1})$ for $h \in H$.
Then with respect to a natural choice of basis, for $h \in H$, we have
\begin{equation}\label{eq:dih-matrix}
   \rho(h)        = \mat {\chi(h)}00{\chi^\sigma(h)} \textnormal{ and }
   \rho(\sigma h) = \mat 0{\chi^\sigma(h)}{\chi(h)\chi(\sigma^2)}0.
\end{equation}

The name {\em dihedral representation} is justified because an irreducible representation $\rhobar: G \to \GL_2(\FF)$ with $\FF$ a finite field
is dihedral if and only if its projective image is a dihedral group (after possibly replacing $\FF$ by a finite extension).

For a representation $\rho:G \to \GL_2(R)$ we define the adjoint representations
$\ad(\rho)_R$ and $\ad^0(\rho)_R$ as the representations given by the conjugacy of $\rho$ on~$M_2(R)$ and $M_2^0(R)$, respectively,
where $M_2(R)$ are the $2\times 2$-matrices with coefficients in~$R$ and $M_2^0(R)$ is its $R$-submodule consisting of the matrices
having trace~$0$.

From now on, we assume the following set-up.
\begin{setup}\label{setup:one}
Let $G$ be a profinite group, $H \lhd G$ an open subgroup of index~$2$ and $\sigma \in G \setminus H$.
\end{setup}

\begin{defi}\label{defi:modules}
\begin{enumerate}[(a)]
\item For an extension of topological rings $R \subseteq R'$,
a character $\epsilon: G \to R^\times$ and a character $\chi: H \to R^\times$, we make the following definitions:
\begin{itemize}
\item $C(\epsilon)_{R'}$ is $R'$ with $G$-action through~$\epsilon$; in particular, $C(1)_{R'}$ is the trivial module;
\item $C(\chi)_{R'}$ is $R'$ with $H$-action through~$\chi$;
\item $N_{R'}$ is ${R'}^2$ with trivial~$H$-action and $\sigma$ acting by swapping the two standard basis vectors;
\item $I(\chi)_{R'} = \Ind_H^G(\chi)_{R'}$, as described in Definition~\ref{defi:dihedral}.
\end{itemize}
\item In the case of finite fields of characteristic~$p>0$, we sometimes drop minimal fields of definition from the notation.
In particular, we write $C(1) := C(1)_{\FF_p}$, $C(\epsilon) := C(\epsilon)_{\FF_p}$ if $\epsilon$ is at most quadratic,
$N := N_{\FF_p}$ and $I(\chi) := I(\chi)_{F_0}$ if $F_0$ is the extension of~$\FF_p$ generated by the coefficients of all occurring
characteristic polynomials.
\end{enumerate}
\end{defi}

\begin{lem}\label{lem:dihedral}
Let $R$ be a topological ring and let $\rho\cong \Ind_H^G(\chi)_R$ for some character $\chi: H \to R^\times$.
Choose a basis of $R^2$ such that as in \eqref{eq:dih-matrix} under this basis, $\rho(h)$ is diagonal for all $h \in H$.
Then, under the choice of this basis, the map
$$\ad(\rho)_R \to N_R \oplus I(\chi/\chi^\sigma)_R, \;\;\; \mat abcd \mapsto \vect ad \oplus \vect b{c/\chi(\sigma^2)}$$
is an isomorphism of $R[G]$-modules.
Moreover, one has an isomorphism of $R[G]$-modules
$$\ad^0(\rho)_R \cong C(\epsilon)_R \oplus I(\chi/\chi^\sigma)_R,$$
where $\epsilon: G \twoheadrightarrow G/H \twoheadrightarrow \{\pm 1\} \subseteq R^\times$.
Furthermore, if $2$ is invertible in~$R$, then $N_R$ is isomorphic to $C(1)_R \oplus C(\epsilon)_R$ as $R[G]$-modules.
Finally, the exact sequence of $R[G]$-modules
$$ 0 \to \ad^0(\rho)_R \to \ad(\rho)_R \xrightarrow{\tr} C(1)_R \to 0$$
is split if $2$ is invertible in~$R$ with split $r \mapsto \mat{r/2}00{r/2}$.
\end{lem}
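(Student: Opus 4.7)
The entire statement is a sequence of explicit identifications of $R[G]$-modules, so my plan is to do the required bookkeeping by direct matrix computation, identifying the key algebraic input along the way.

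First, I would compute the conjugation action of $\rho$ on a general matrix $\mat abcd$ at the two kinds of generators. For $h\in H$, using $\rho(h)=\mathrm{diag}(\chi(h),\chi^\sigma(h))$, conjugation leaves $a,d$ fixed and multiplies $b,c$ by $\chi(h)/\chi^\sigma(h)$ and $\chi^\sigma(h)/\chi(h)$ respectively. For $\sigma$, using $\rho(\sigma)=\mat 0 1{\chi(\sigma^2)}0$, a direct computation shows
$$\rho(\sigma)\mat abcd\rho(\sigma)^{-1}=\mat d{c/\chi(\sigma^2)}{b\chi(\sigma^2)}a.$$
Under the proposed map $\phi$, the diagonal part $\vect ad$ is fixed by $H$ and swapped by $\sigma$, which is exactly the action on $N_R$. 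For the off-diagonal coordinates $(u,v):=(b,c/\chi(\sigma^2))$, the $h$-action sends them to $(u\,\psi(h),\,v\,\psi(h)^{-1})$ with $\psi:=\chi/\chi^\sigma$, and the $\sigma$-action swaps them (the factor $\chi(\sigma^2)$ cancels out by design).

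The subtle point I want to highlight is matching this to $I(\psi)_R$. Writing the induced representation in the basis of \eqref{eq:dih-matrix}, the $h$-action is $\mathrm{diag}(\psi(h),\psi^\sigma(h))$ and the $\sigma$-action is $\mat 0 1{\psi(\sigma^2)}0$. I would then observe that since $\chi$ factors through $H^{\ab}$ and $\sigma^2\in H$, conjugation by $\sigma^2$ acts trivially on $\chi$, so $\chi^{\sigma^2}=\chi$. This yields $\psi^\sigma=\chi^\sigma/\chi^{\sigma^2}=\chi^\sigma/\chi=\psi^{-1}$ and $\psi(\sigma^2)=\chi(\sigma^2)/\chi^\sigma(\sigma^2)=1$. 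With these two identities the computed $H$- and $\sigma$-actions on $(u,v)$ agree precisely with those on $I(\psi)_R$. Since $\phi$ is visibly $R$-linear and bijective (it is a rearrangement of coordinates together with a unit rescaling), this establishes the first isomorphism. This is the one step where the dihedral nature of the situation (rather than just abstract matrix algebra) really enters.

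The remaining assertions follow formally. Restricting $\phi$ to trace-zero matrices identifies $\ad^0(\rho)_R$ with the submodule $\{\vect a{-a}\}\subset N_R$ summed with $I(\chi/\chi^\sigma)_R$; the submodule is one-dimensional with trivial $H$-action and $\sigma$ acting by $-1$, hence it is $C(\epsilon)_R$. When $2\in R^\times$, the change of basis $\vect a b=\tfrac{a+b}{2}\vect 11+\tfrac{a-b}{2}\vect 1{-1}$ diagonalises the swap and writes $N_R=C(1)_R\oplus C(\epsilon)_R$. Finally, the map $r\mapsto\mat{r/2}0 0{r/2}$ takes values in scalar matrices, which are central and hence fixed by conjugation, so it is $R[G]$-linear; its composition with the trace is the identity, splitting the trace sequence. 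No step beyond the equivariance check in the second paragraph looks delicate.
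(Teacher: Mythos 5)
Your proof is correct and is exactly the ``elementary calculations'' that the paper's one-line proof alludes to, carried out in full; in particular you correctly isolate the one genuinely non-formal input, namely that $\chi^{\sigma^2}=\chi$ (so $\psi^\sigma=\psi^{-1}$ and $\psi(\sigma^2)=1$), which is what makes the off-diagonal part match $I(\chi/\chi^\sigma)_R$ in the normalisation of \eqref{eq:dih-matrix}. Nothing further is needed.
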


\begin{proof}
These are elementary calculations. 
\end{proof}

\begin{setup}\label{setup:two}
In addition to Set-up~\ref{setup:one}, let $\chibar: H \to \FF^\times$ be a character such that $\chibar \neq \chibar^\sigma$, where $\chibar^\sigma(h) = \chibar(\sigma h\sigma^{-1})$ for $h \in H$ is the conjugate character.
Let $\rhobar: G \to \GL_2(\FF)$ be $\Ind_H^G(\chibar)_\FF$. By the assumption $\chibar \neq \chibar^\sigma$, the representation $\rhobar$
is absolutely irreducible.
We also use the following pieces of notation:
$$ \Gbar = \rhobar(G), \;\;\Hbar = \rhobar(H),\;\;
\Gbar^\ad = \ad(\rhobar)(G), \;\;\Hbar^\ad = \ad(\rhobar)(H), \;\; C := \ker(\Gbar \twoheadrightarrow \Gbar^\ad).$$
\end{setup}

Note that $\Gbar^\ad$ is a quotient of~$\Gbar$ and $\Hbar^\ad$ is a quotient of~$\Hbar$. Furthermore,
$$ C=\ker(\Gbar \twoheadrightarrow \Gbar^\ad) =  \ker(\Hbar \twoheadrightarrow \Hbar^\ad)
\subseteq \{ \mat a00a \;|\; a \in \FF^\times\}$$
and
$\Gbar^\ad$ is isomorphic to the image of $\Ind_H^G(\chibar/\chibar^\sigma)_\FF$. If $\chibar/\chibar^{\sigma} = \chibar^{\sigma}/\chibar$, then $I(\chibar/\chibar^{\sigma})_{\FF}= C(\chibar_1)_{\FF} \oplus C(\chibar_2)_{\FF}$ for some characters $\chibar_1$, $\chibar_2 : G \to \FF^{\times}$. We will keep this notation for the rest of the section.

In the sequel, we will make frequent use of the Krull--Schmidt--Azumaya theorem (\cite[(6.12)]{CR}) allowing us to express modules
over group rings of finite groups uniquely as direct sums of indecomposables.

\begin{lem}\label{lem:indec}
\begin{enumerate}[(a)]
\item\label{lem:indec:a} The simple $\FF_p[\Hbar^\ad]$-modules occurring in $\ad(\rhobar)_\FF$ are $C(1)$,
$C(\chibar/\chibar^\sigma)$, $C(\chibar^\sigma/\chibar)$. Every indecomposable $\FF_p[\Hbar^\ad]$-module is simple.
\item\label{lem:indec:b} If $p>2$, then the simple $\FF_p[\Gbar^\ad]$-modules occurring in $\ad(\rhobar)_\FF$ are $C(1)$, $C(\epsilon)$
and $I(\chibar/\chibar^\sigma)$ (or $C(\chibar_1)$ and $C(\chibar_2)$ for some characters $\chibar_1,\chibar_2:\Gbar^\ad \to \FF^\times$  if $\chibar/\chibar^\sigma = \chibar^\sigma/\chibar$).
Every indecomposable $\FF_p[\Gbar^\ad]$-module is simple.
\item\label{lem:indec:c} For $p=2$, the simple $\FF_2[\Gbar^\ad]$-modules occurring as Jordan--Hölder factors of $\ad(\rhobar)_\FF$ are $C(1)$ and $I(\chibar/\chibar^\sigma)$.
The only indecomposable non-simple $\FF_2[\Gbar^\ad]$-module whose composition factors are in $\{C(1),I(\chibar/\chibar^\sigma)\}$ is~$N$.
\end{enumerate}
\end{lem}

\begin{proof}
(a,b) Since $p \nmid \#\Hbar^\ad$ and $p \nmid \#\Gbar^\ad$ if $p>2$, by Maschke's theorem \cite[Theorem~3.14]{CR} every indecomposable module is simple.
Lemma~\ref{lem:dihedral} gives the list of occurring simple modules. Note that 
$I(\chibar/\chibar^\sigma)$ is simple if and only if $(\chibar/\chibar^\sigma)^2 \neq 1$.
Note also that we use that $C(\epsilon)_\FF \cong C(\epsilon) \otimes_{\FF_p} \FF \cong C(\epsilon)^{[\FF:\FF_p]}$
and $I(\chibar/\chibar^\sigma)_\FF \cong I(\chibar/\chibar^\sigma)^{[\FF:F_0]}$ (in the notation of Definition~\ref{defi:modules}) as $\FF_p[\Gbar^\ad]$-modules,
and similarly for the other modules and over $\FF_p[\Hbar^\ad]$.

(c) The list of simple modules from~(b) is also valid for~$p=2$.
Note that the Jordan--Hölder factors of~$N$ are all~$C(1)$.
By assumption we have $\chibar/\chibar^\sigma \neq \chibar^\sigma/\chibar$ (since $p=2$).
Let $V$ be an indecomposable non-simple $\FF_2[\Gbar^\ad]$-module the composition factors of which occur as Jordan--Hölder factors of $\ad(\rhobar)_\FF$.
We first decompose~$V$ as $\FF_2[\Hbar^\ad]$-module into
$$ V \cong C(1)^{r_1} \oplus C(\chibar/\chibar^\sigma)^{r_2} \oplus C(\chibar^\sigma/\chibar)^{r_3}.$$
This decomposition can be considered as a decomposition into simultaneous eigenspaces for the $H$-action. Note that $G$ permutes the occurring simultaneous
eigenspaces. More precisely, it stabilises $C(1)^{r_1}$ and $\sigma\big( C(\chibar/\chibar^\sigma) \big) = C(\chibar^\sigma/\chibar)$.
So $r_2=r_3$ follows and thus $V \cong C(1)^{r_1} \oplus I(\chibar/\chibar^\sigma)^{r}$ as $\FF_p[\Gbar^\ad]$-modules.
By the indecomposable non-simple assumption
$V \cong C(1)^{r_1}$, i.e.\ $V$ is $\FF_2^{r_1}$ with trivial~$H$-action and an involutive action by~$\sigma$.
Due to the indecomposability, in the Jordan normal form of~$\sigma$ on~$V$ there can only be a single Jordan block.
This block has to have size~$1$ or~$2$ as otherwise the order of $\sigma$ would be larger than~$2$.
As $V$ is non-simple, the block size has to be~$2$ and $V$ is thus isomorphic to~$N$. 
\end{proof}

Although we formulate the following corollary for all primes~$p$, it is only non-trivial for~$p=2$.

\begin{cor}\label{cor:indec}
Any indecomposable $\FF_p[\Gbar^\ad]$-module the composition factors of which are among those of $\ad(\rhobar)_\FF$
is a submodule of $\ad(\rhobar)_\FF$.
\end{cor}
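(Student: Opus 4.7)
The plan is to split into the two cases $p>2$ and $p=2$, appeal to the classification of indecomposables given in Lemma~\ref{lem:indec}, and in each case exhibit the finitely many candidate modules as $\FF_p$-subrepresentations of $\ad(\rhobar)_\FF$ by tracing through the decompositions provided by Lemma~\ref{lem:dihedral}.

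For $p>2$, Lemma~\ref{lem:indec}(b) says every indecomposable with the permitted composition factors is in fact simple, so it suffices to realise each simple $\FF_p[\Gbar^\ad]$-module from the list $\{C(1),C(\epsilon),I(\chibar/\chibar^\sigma)\}$ (or $\{C(1),C(\epsilon),C(\chibar_1),C(\chibar_2)\}$ in the degenerate case) as a submodule. By Lemma~\ref{lem:dihedral}, we have an $\FF[G]$-decomposition
$$\ad(\rhobar)_\FF \cong C(1)_\FF \oplus C(\epsilon)_\FF \oplus I(\chibar/\chibar^\sigma)_\FF,$$
and since for a simple $\FF_p[\Gbar^\ad]$-module $S$ with field of definition $F_0 \subseteq \FF$ the scalar extension $S_\FF = S \otimes_{F_0} \FF$ is, as an $\FF_p[\Gbar^\ad]$-module, isomorphic to $S^{[\FF:F_0]}$, each of the listed simples is literally an $\FF_p[\Gbar^\ad]$-submodule of the corresponding summand.

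For $p=2$, Lemma~\ref{lem:indec}(c) says the only indecomposables with the allowed composition factors are the two simples $C(1)$, $I(\chibar/\chibar^\sigma)$ and the unique non-simple~$N$. The two simples are handled exactly as above using Lemma~\ref{lem:dihedral}'s decomposition $\ad(\rhobar)_\FF \cong N_\FF \oplus I(\chibar/\chibar^\sigma)_\FF$: $C(1)$ sits as the $\sigma$-fixed diagonal inside $N \hookrightarrow N_\FF$, and $I(\chibar/\chibar^\sigma)$ embeds in $I(\chibar/\chibar^\sigma)_\FF$ after scalar extension. For the remaining module $N$, observe that $N_\FF \cong N \otimes_{\FF_2} \FF \cong N^{[\FF:\FF_2]}$ as $\FF_2[\Gbar^\ad]$-modules (because the $\Gbar^\ad$-action on~$N$ is already defined over~$\FF_2$), so $N$ is a direct summand, and in particular a submodule, of $N_\FF$, hence of $\ad(\rhobar)_\FF$.

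There is no genuine obstacle here beyond bookkeeping; the only point requiring a little care is the transition between $\FF[G]$-module decompositions (provided by Lemma~\ref{lem:dihedral}) and $\FF_p[\Gbar^\ad]$-module decompositions, which is handled uniformly by the observation that restriction of scalars turns an $\FF$-form of a simple $\FF_p[\Gbar^\ad]$-module into a direct sum of copies of that simple module.
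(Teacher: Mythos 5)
Your proof is correct and follows exactly the route the paper intends: the paper's own proof is just the one-line ``This follows immediately from Lemma~\ref{lem:indec}'', and your argument fills in precisely that case-check (classification of indecomposables from Lemma~\ref{lem:indec}, realisation as submodules via the decompositions of Lemma~\ref{lem:dihedral}, and the scalar-restriction observation $S_\FF \cong S^{[\FF:F_0]}$ already noted in the proof of Lemma~\ref{lem:indec}).
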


\begin{proof}
This follows immediately from Lemma~\ref{lem:indec}. 
\end{proof}

Let $R \in \calC$, $\rho: G \to \GL_2(R)$ be a lift of~$\rhobar$ and $\Gamma_\rho$ be defined by the diagram~\eqref{eq:seq}.

\begin{lem}\label{lem:split-basis}
\begin{enumerate}[(a)]
\item The lower exact sequence in~\eqref{eq:seq} splits, as indicated in the diagram.

\item There is an $R$-basis of $\rho$ such that for all $h \in H$ one has
$$s\circ\rhobar(h) = \mat{\chibarhat(h)}00{\chibarhat^\sigma(h)},$$
where the hat indicates the Teichm\"uller lift.
In particular, $s \circ \rhobar(h)$ is scalar if $\rhobar(h)$ is scalar.

Thus the conjugation action of $\Hbar$ on $\Gamma_\rho$ via~$s$ descends to~$\Hbar^\ad$.

\item If $p>2$, the upper sequence in~\eqref{eq:seq} splits, leading to a conjugation action of $\Gbar$ on~$\Gamma_\rho$,
which descends to $\Gbar^\ad$.

\item If $\Gamma_\rho$ is abelian (for instance, if $\fm_R^2 = 0$), then via choices of preimages
the group $\Gbar$ acts on~$\Gamma_\rho$ via conjugation, and this action descends to~$\Gbar^\ad$.

\end{enumerate}
\end{lem}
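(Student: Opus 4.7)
The approach rests on two technical ingredients I would invoke repeatedly: (i) Schur–Zassenhaus for extensions of a finite group by a pro-$p$ group when the finite quotient has order prime to $p$; and (ii) the rigidity of $p'$-roots of unity in $R^\times$, namely that they lift uniquely from the residue field via the Teichmüller section. The key initial observation is that $\chibar$ and $\chibar^\sigma$ take values in $\FF^\times$, a group of order prime to $p$, so $\Hbar$ is a finite $p'$-group, and $\Gbar$ is also a finite $p'$-group whenever $p>2$. Since $\Gamma_\rho \subseteq I + M_2(\fm_R)$ is pro-$p$, Schur–Zassenhaus applied to the lower sequence in \eqref{eq:seq} produces the splitting $s$ of (a), and applied to the upper sequence (when $p>2$) yields the splitting in~(c).

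Part (b) is the technical heart. I would take any splitting $s$ from (a) and view it as a two-dimensional $R$-representation of the finite $p'$-group $\Hbar$, whose residual representation is the tautological decomposition $\chibar \oplus \chibar^\sigma$. Because $|\Hbar|$ is invertible in $R$, Maschke's theorem together with lifting of orthogonal idempotents in the complete local ring $R[\Hbar]$ decomposes $s$ as a direct sum of two characters $\epsilon_1,\epsilon_2 : H \to R^\times$; an $R$-basis change places $s$ in diagonal form. Both $\epsilon_i$ take values in $|\Hbar|$-th roots of unity of $R^\times$, and since $p \nmid |\Hbar|$, Hensel's lemma makes reduction a bijection on such roots of unity, so $\epsilon_1 = \chibarhat$ and $\epsilon_2 = \chibarhat^\sigma$ are forced. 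The ``in particular'' and ``thus'' clauses then follow: for $c \in C$ one has $\chibar(c) = \chibar^\sigma(c)$, so the Teichmüller lifts coincide and $s(c)$ is scalar, hence centralises $\Gamma_\rho$; the $\Hbar$-action therefore factors through $\Hbar^\ad = \Hbar/C$.

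For the descent statements in (c) and (d), I would repeatedly use the same principle: any $p'$-order element $x \in \GL_2(R)$ lifting a scalar matrix $\lambda_0 I$ is itself the scalar matrix $\hat{\lambda}_0 I$. Indeed, its minimal polynomial $\mu$ divides $Y^n - 1$ for $n$ coprime to $p$ and is therefore separable; the reduction $\bar\mu = (Y - \lambda_0)^k$ must again be separable, which forces $k = 1$, hence $\mu = Y - \hat{\lambda}_0$ and $x = \hat{\lambda}_0 I$. Applied to any splitting $s'$ in (c), this shows $s'(c)$ is scalar for $c \in C$, and scalars centralise $\Gamma_\rho$, so the $\Gbar$-action on $\Gamma_\rho$ descends to $\Gbar^\ad$. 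For (d), abelianness of $\Gamma_\rho$ renders the conjugation action on $\Gamma_\rho$ independent of the choice of lift in $\Image(\rho)$ (different lifts differ by an element of $\Gamma_\rho$, which acts trivially on itself by conjugation); then for $c \in C \subseteq \Hbar$, taking the concrete lift $s(c)$ from (b), which is scalar, shows the action again descends to $\Gbar^\ad$.

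The only step that requires genuine care is the simultaneous diagonalisation in (b): one must decompose the $R$-representation $s$ into characters whose values are then forced to be the Teichmüller lifts of $\chibar$ and $\chibar^\sigma$. This is handled via lifting of orthogonal idempotents from $\FF[\Hbar]$ to $R[\Hbar]$ combined with Hensel's lemma for $p'$-order roots of unity; once this normal form is secured, the remaining items follow formally from the fact that scalar matrices centralise everything in $\GL_2(R)$.
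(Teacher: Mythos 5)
Your proof is correct and follows essentially the same route as the paper: Schur--Zassenhaus for the splittings in (a) and (c), diagonalisation of the prime-to-$p$ image over $R$ together with uniqueness of prime-to-$p$ roots of unity for (b), and the observation that scalar matrices centralise $\Gamma_\rho$ for all the descent statements. The only real difference is the device used in (b) -- the paper picks a single $h$ with $\chibar(h)\neq\chibar^{\sigma}(h)$ and splits $R^2$ via the coprime factorisation of $X^n-1$ plus Nakayama, whereas you lift orthogonal idempotents in $R[\Hbar]$; both are standard and give the same normal form, though when writing up the rigidity claim in (c) you should justify it via an annihilating polynomial argument or the triviality of prime-to-$p$ torsion in the pro-$p$ group $1+M_2(\fm_R)$, rather than via a \emph{minimal polynomial} over the possibly non-reduced ring $R$.
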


\begin{proof}
The splitting of the exact sequences in (a) and (c) follows from the theorem of Schur--Zassen\-haus \cite[(18.1)]{Aschbacher}
since the group orders of $\Hbar$ (resp.\ $\Gbar$) are coprime to the order of~$\Gamma_\rho$.

(b) By its explicit description, $\FF^2$ has a basis consisting of simultaneous eigenvectors for~$\Hbar$;
the eigenvalues are distinct for some matrices due to the assumption that $\rhobar$ is irreducible. Let $a,b$ be two
such distinct eigenvalues, occurring for some $\rhobar(h)$. The order $n$ of $\rhobar(h)$ is not divisible by~$p$.
Hence the polynomial $X^n-1$ annihilates $\rhobar(h)$ (and also $s \circ \rhobar(h)$) and factors into $n$ distinct coprime
factors over~$\FF$. Then so it does over~$R$ i.e.\
$$X^n-1= (X-\hat{a})(X-\hat{b})f(X)$$
for some $f \in R[X]$. Denote by $\overline{f} \in \FF[X]$ the reduction of~$f$ modulo~$\fm_R$.
As $\det(\overline{f}(\rhobar(h)))$ is invertible in~$\FF$, also $f(s \circ \rhobar(h))$ is invertible.
Consequently $(X-\hat{a})(X-\hat{b})$ annihilates $s\circ \rhobar(h)$. As the two polynomials $(X-\hat{a})$ and $(X-\hat{b})$
are coprime, the representation space $R^2$ of~$\rho$ is the direct sum of the eigenspaces of $s \circ \rhobar(h)$
for the eigenvalues~$\hat{a}$ and~$\hat{b}$.
By Nakayama's lemma, each of these eigenspaces is a non-trivial quotient of~$R$ and each eigenspace is generated by one element over $R$ as this is the case over~$\FF$.
This leads to a surjection $R^2 \to R^2$ of Noetherian modules, which is hence an isomorphism, showing that
the each eigenspace is free of rank~$1$ as $R$-module.

(c) The action descends to $\Gbar^\ad$ because the kernel $C$ of $\Gbar \to \Gbar^\ad$ acts through scalar matrices due to~(b).

(d) is clear as by part~(b), the action descends to $\Gbar^\ad$. 
\end{proof}

\subsection{Characterisation of dihedral representations by Frattini quotients}\label{subsec:frat}
For a pro-$p$ group~$\Gamma$ we denote by $\Phi(\Gamma)$ its $p$-Frattini subgroup, that is, the closure of $\Gamma^p [\Gamma,\Gamma]$ in~$\Gamma$.
The quotient $\Gamma/\Phi(\Gamma)$ will be called the $p$-Frattini quotient. It can be characterised as the
largest continuous quotient of~$\Gamma$ that is an elementary abelian $p$-group.
Note that the $p$-Frattini subgroup is a characteristic subgroup and the actions on $\Gamma_\rho$ from Lemma~\ref{lem:split-basis} induce actions
on the $p$-Frattini quotient.

The key input for characterising dihedral deformations is the following fact from group theory.

\begin{prop}\label{prop:coprime-aut}
Let $\Gamma$ be a pro-$p$ group and let $A \subseteq \Aut(\Gamma)$ be a finite subgroup of order coprime to~$p$.
Then the natural map $A \to \Aut(\Gamma/\Phi(\Gamma))$ is injective.
\end{prop}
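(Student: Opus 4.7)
The plan is to reduce to a finite $p$-group and then combine Burnside's basis theorem with the classical coprime-action lemma. Let $\alpha \in A$ act trivially on $\Gamma/\Phi(\Gamma)$; the goal is to show $\alpha = \id_\Gamma$.

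First I would reduce to a finite $p$-group. Write $\Gamma = \varprojlim_U \Gamma/U$ where $U$ ranges over open normal $A$-stable subgroups: one obtains such a cofinal system by replacing any open normal $V$ by $\bigcap_{a \in A} a(V)$, a finite intersection that is again open, normal, and now $A$-stable. By continuity it suffices to show that $\alpha$ acts trivially on each finite quotient $P := \Gamma/U$. Since $\Phi$ is compatible with quotients for finite $p$-groups, $\Phi(P) = \Phi(\Gamma)U/U$, and the hypothesis descends to the statement that $A$ acts trivially on $P/\Phi(P)$.

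Next I would apply the coprime-action lemma: if a finite group $A$ acts on a finite group $P$ with $(|A|,|P|)=1$ and $N \trianglelefteq P$ is $A$-stable, then the natural map $P^A \to (P/N)^A$ is surjective. Taking $N = \Phi(P)$ and using $(P/\Phi(P))^A = P/\Phi(P)$, this yields $P^A \Phi(P) = P$. Burnside's basis theorem then forces $P^A = P$: any preimages in $P^A$ of a generating set of $P/\Phi(P)$ generate the $p$-group $P$. Hence $\alpha$ acts trivially on every finite quotient $\Gamma/U$, and therefore on $\Gamma$ itself.

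The one substantive ingredient is the coprime-action lemma, which is classical (e.g.\ Gorenstein, \emph{Finite Groups}, Theorem~5.3.5). Its proof is an induction on the composition length of $N$, reducing to the case where $N$ is elementary abelian; there surjectivity of $P^A \to (P/N)^A$ is equivalent to the vanishing of $H^1(A,N)$, which holds since $|A|$ is invertible in $N$. Everything else is bookkeeping for the pro-$p$ reduction.
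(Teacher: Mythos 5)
Your proof is correct and follows essentially the same route as the paper: reduce the pro-$p$ statement to finite quotients (the paper does this too, though it leaves the $A$-stability of the cofinal system implicit, which you handle cleanly) and then invoke the finite-$p$-group case. The only difference is that the paper simply cites Aschbacher (24.1) for that finite case, whereas you reprove it via the coprime-action lemma plus Burnside's basis theorem — which is the standard proof of the cited fact, so this is elaboration rather than a different approach; just note that since only $\alpha$ (not all of $A$) is assumed trivial on the Frattini quotient, you should run the fixed-point argument with $\langle\alpha\rangle$ in place of $A$.
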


\begin{proof}
The version for a finite $p$-group~$\Gamma$ is proved in \cite[(24.1)]{Aschbacher}. To see the statement for pro-$p$ groups,
consider an automorphism $\alpha$ of~$\Gamma$ that is trivial on $\Gamma/\Phi(\Gamma)$. By the result for finite $p$-groups, $\alpha$
is then also trivial on any finite quotient $\Gamma'$ of~$\Gamma$ because $\Gamma'/\Phi(\Gamma')$ is a quotient of $\Gamma/\Phi(\Gamma)$.
Consequently, $\alpha$ is trivial on~$\Gamma$. 
\end{proof}

We can now prove the characterisation of dihedral representations via Frattini quotients. We continue to use the notation introduced above.

\begin{proof}[Proof of Theorem~\ref{thm:inf-dih}.]
Before starting the proof of the equivalences, let us prove the implication mentioned in item \eqref{item:id:ii}:
if the action of $\Hbar^\ad$ on $\Gamma_\rho$ is trivial, then so is the action of~$\Hbar$; as this action is by conjugation via the splitting,
$s(\Hbar)$ and $\Gamma_\rho$ commute, leading to $\rho(H) = \Gamma_\rho \times s(\Hbar)$.

Next, we apply Proposition~\ref{prop:coprime-aut}, yielding the equivalence of \eqref{item:id:ii} and \eqref{item:id:iii}.

Let us assume~~\eqref{item:id:i}. From the matricial description of~$\rho$ in \eqref{eq:dih-matrix} we see that 
$\rho(H)$ sits in the diagonal matrices and is hence abelian.
Thus the conjugation action by $\Hbar$ on $\Gamma_\rho$ is trivial, showing~\eqref{item:id:ii}.

Let us now assume~\eqref{item:id:ii}. As already seen, we then have $\rho(H) = \Gamma_\rho \times s(\Hbar)$.
We choose an $R$-basis $v_1,v_2$ as in Lemma~\ref{lem:split-basis}~(b).
For this basis of $R^2$, the matrices representing elements in $\Gamma_\rho$ have to be diagonal as well,
as any matrix commuting with a non-scalar diagonal matrix with unit entries is diagonal itself.
This implies that $\Gamma_\rho$ is an abelian pro-$p$ group.
We can thus see $\Gamma_\rho$ as being given by two characters $\psi_1,\psi_2:H \to R^\times$, i.e.\
$\rho(h) = \mat{\psi_1(h) \chibarhat(h)}00{\psi_2(h) \chibarhat^\sigma(h)}$ for $h \in H$.
Moreover, conjugation by~$\rho(\sigma)$ swaps the two simultaneous eigenvectors, proving $\psi_2 =  \psi_1^\sigma$.
The matricial description of induced representations in \eqref{eq:dih-matrix} immediately implies~\eqref{item:id:i}. 
\end{proof}

\begin{lem}\label{lem:gamma-pro-p}
Let $R \in \calC$ and $\rho: G \to \GL_2(R)$ a lift of~$\rhobar$.
As before, define $\Gamma = \Gamma_\rho = \ker\big(\Image(\rho) \twoheadrightarrow \Image(\rhobar)\big)$.
For $k \in \ZZ_{\ge 1}$, also define $\Gamma_k = \Image\big(\Gamma \hookrightarrow \Image(\rho) \twoheadrightarrow \Image(\rho \mod \fm_R^k)\big)$.

Then $\Gamma = \plim{k} \Gamma_k$ and we have $\Gbar^\ad$-equivariantly:
$$\ker\big(\Gamma_k \twoheadrightarrow \Gamma_{k-1}\big) \subseteq 1 + M_2(\fm_R^{k-1}/\fm_R^k) \underset{\sim}{\xrightarrow{1+A \mapsto A}} M_2(\FF)^{r_k},$$
where $r_k = \dim_{\FF} \fm_R^{k-1}/\fm_R^k$.
Moreover, if $\det(\rho)$ is the Teichm\"uller lift of $\det(\rhobar)$, then $\ker\big(\Gamma_k \twoheadrightarrow \Gamma_{k-1}\big)$
is contained in $M_2^0(\FF)^{r_k}$.
\end{lem}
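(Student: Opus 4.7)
The plan is to use $\fm_R$-adic completeness of $R$ to realise $\Gamma$ as the inverse limit of the $\Gamma_k$, and then to linearise each graded piece via the bijection $1 + A \leftrightarrow A$, exploiting that $\fm_R^{k-1} \cdot \fm_R^{k-1} \subseteq \fm_R^k$ for $k \ge 2$. For the inverse limit statement, completeness of $R$ gives $\GL_2(R) \cong \plim{k} \GL_2(R/\fm_R^k)$ as topological groups, with the kernels of reduction forming a neighbourhood basis of the identity; continuity of $\rho$ then yields $\Image(\rho) = \plim{k} \Image(\rho \bmod \fm_R^k)$, and taking kernels of the compatible surjections onto $\Image(\rhobar)$ identifies $\Gamma$ with $\plim{k} \Gamma_k$.

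Next I would identify the graded piece $\ker(\Gamma_k \twoheadrightarrow \Gamma_{k-1})$. An element of this kernel is the reduction modulo $\fm_R^k$ of some $\rho(g)$ satisfying $\rho(g) \equiv 1 \pmod{\fm_R^{k-1}}$, hence is of the form $1 + A$ with $A \in M_2(\fm_R^{k-1}/\fm_R^k)$; since $\fm_R^{k-1}/\fm_R^k$ is an $\FF$-vector space of dimension $r_k$, this realises the kernel as a subgroup of $M_2(\FF)^{r_k}$. The assignment $1+A \mapsto A$ is a group homomorphism because $(1+A)(1+B) = 1 + (A+B) + AB$ and the entries of $AB$ lie in $\fm_R^{2(k-1)} \subseteq \fm_R^k$ for $k \ge 2$ (the case $k=1$ being trivial, as $\Gamma_1 = \{1\}$).

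For $\Gbar^\ad$-equivariance, conjugation by $g \in \Gbar = \Image(\rhobar)$ on $\ker(\Gamma_k \twoheadrightarrow \Gamma_{k-1})$ is well-defined: if $\tilde g \in \Image(\rho)$ is a lift of $g$, changing the lift by $\gamma = 1+B \in \Gamma$ modifies $\tilde g A \tilde g^{-1}$ by the commutator $[B,\tilde g A \tilde g^{-1}]$ (up to a unit), whose entries lie in $\fm_R \cdot \fm_R^{k-1} \subseteq \fm_R^k$ and hence vanish. Under $1+A \mapsto A$ the resulting action becomes matrix conjugation by $g \in \GL_2(\FF)$ on $M_2(\FF)^{r_k}$, which factors through $\Gbar^\ad$ because scalar matrices act trivially. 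For the final claim: if $\det \rho$ is the Teichmüller lift of $\det \rhobar$, then every element of $\Gamma$ has determinant $1$, and expanding $\det(1+A) = 1 + \tr(A) + \det(A)$ while noting $\det(A) \in \fm_R^{2(k-1)} \subseteq \fm_R^k$ forces $\tr(A) = 0$, placing $A$ in $M_2^0(\FF)^{r_k}$.

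The main obstacle here is really just bookkeeping on the $\fm_R$-adic filtration; the substance of the lemma is the standard identification of the tangent space of $\GL_2$ at the identity with $M_2$, transported to the graded pieces of a complete local Noetherian ring. No deeper input beyond $\fm_R^a \cdot \fm_R^b \subseteq \fm_R^{a+b}$ is needed.
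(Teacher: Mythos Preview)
Your proof is correct and follows essentially the same route as the paper's. The paper's argument is very terse---it declares the inverse limit ``clear'', observes that $\ker\pi_k = 1 + M_2(\fm_R^{k-1}/\fm_R^k)$, checks well-definedness of the $\Gbar$-action via the same filtration argument you give, and then defers both the $\Gbar^\ad$-equivariance and the trace-zero assertion to ``simple calculations''; you have simply written those calculations out. The only cosmetic difference is that the paper cites Lemma~\ref{lem:split-basis} for why the action descends to $\Gbar^\ad$, whereas you argue directly that scalar matrices act trivially by conjugation---but this is exactly the content of that lemma's relevant parts.
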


\begin{proof}
The first statement is clear.
The inclusion in the second statement is a consequence of the fact that the kernel of the projection
$\pi_k: \GL_2(R/\fm_R^k) \twoheadrightarrow \GL_2(R/\fm_R^{k-1})$ is given by~$1 + M_2(\fm_R^{k-1}/\fm_R^k)$.
Furthermore, note that $\Gbar$ acts on $\ker\big(\Gamma_k \twoheadrightarrow \Gamma_{k-1}\big)$ for any~$k$ by conjugation with a preimage in $\Image(\rho \mod \fm_R^k)$
and that this action is independent of the choice of preimage because conjugation by $1+M_2(\fm_R/\fm_R^k)$ on $\ker\big(\Gamma_k \twoheadrightarrow \Gamma_{k-1}\big)$ is trivial.
Thus, by Lemma~\ref{lem:split-basis}, the action descends indeed to an action of~$\Gbar^\ad$.
The $\Gbar^\ad$-equivariance and the final assertion follow from simple calculations. 
\end{proof}

\begin{cor}\label{cor:fratt-ad}
The indecomposable $\FF_p[\Gbar^\ad]$-modules occurring in $\Gamma_\rho/\Phi(\Gamma_\rho)$ are submodules of
the adjoint representation $\ad(\rhobar)_\FF$, that is, they are isomorphic to $C(1)$, $C(\epsilon)$, $I(\chibar/\chibar^\sigma)$ (or $C(\chibar_1)$, $C(\chibar_2)$ if $\chibar/\chibar^{\sigma}=\chibar^{\sigma}/\chibar$)
or, if $p=2$, the unique non-trivial extension $N$ of $C(1)$ by itself.

In particular, as a $\FF_p[\Hbar^\ad]$-module, $\Gamma_\rho/\Phi(\Gamma_\rho)$ is isomorphic to $C(1)^r \oplus I(\chibar/\chibar^\sigma)^s$ (or to $C(1)^r \oplus C(\chibar/\chibar^{\sigma})^s$ if $\chibar/\chibar^{\sigma}=\chibar^{\sigma}/\chibar$)
for some $r,s \in \NN$, and, thus, the $\FF_p[\Hbar^\ad]$-action on $\Gamma_\rho/\Phi(\Gamma_\rho)$ is trivial if and only if $s=0$.
\end{cor}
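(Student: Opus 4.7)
The plan is to combine the filtration of $\Gamma_\rho$ supplied by Lemma~\ref{lem:gamma-pro-p} with the classification of indecomposables in Lemma~\ref{lem:indec} and Corollary~\ref{cor:indec}. Observe first that $\Gamma_\rho/\Phi(\Gamma_\rho)$ is an elementary abelian pro-$p$ group, hence an $\FF_p$-vector space, and that it carries a $\Gbar^\ad$-action by Lemma~\ref{lem:split-basis} (the action passes to the $p$-Frattini quotient because $\Phi(\Gamma_\rho)$ is characteristic).

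First I would unwind Lemma~\ref{lem:gamma-pro-p}: the successive quotients $\Gamma_k/\Gamma_{k-1}$ embed $\Gbar^\ad$-equivariantly into $M_2(\FF)^{r_k} \cong \ad(\rhobar)_\FF^{r_k}$. Consequently every Jordan--H\"older factor of $\Gamma_\rho/\Phi(\Gamma_\rho)$ as an $\FF_p[\Gbar^\ad]$-module is already a Jordan--H\"older factor of $\ad(\rhobar)_\FF$. Decomposing $\Gamma_\rho/\Phi(\Gamma_\rho)$ into indecomposables via Krull--Schmidt--Azumaya and invoking Corollary~\ref{cor:indec} then forces each indecomposable summand to be a submodule of $\ad(\rhobar)_\FF$, yielding precisely the enumerated list $C(1)$, $C(\epsilon)$, $I(\chibar/\chibar^\sigma)$ (respectively $C(\chibar_1)$ and $C(\chibar_2)$ in the degenerate case $\chibar/\chibar^\sigma = \chibar^\sigma/\chibar$), and in addition $N$ when $p=2$.

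For the $\FF_p[\Hbar^\ad]$-part, I would note that $\Hbar^\ad$ is a cyclic group whose order divides $|\FF^\times|$, hence is coprime to $p$. Maschke's theorem therefore makes $\Gamma_\rho/\Phi(\Gamma_\rho)$ semisimple as an $\FF_p[\Hbar^\ad]$-module, with simples drawn from $\{C(1),\, C(\chibar/\chibar^\sigma),\, C(\chibar^\sigma/\chibar)\}$ by Lemma~\ref{lem:indec}(a). Restricting the $\Gbar^\ad$-indecomposables identified above to $\Hbar^\ad$ (via Mackey, or directly from the matricial description \eqref{eq:dih-matrix}) shows that $C(1)$, $C(\epsilon)$ and, when $p=2$, $N$ all restrict to trivial modules, whereas $I(\chibar/\chibar^\sigma)$ restricts to $C(\chibar/\chibar^\sigma) \oplus C(\chibar^\sigma/\chibar)$. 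Hence $\Gamma_\rho/\Phi(\Gamma_\rho) \cong C(1)^r \oplus I(\chibar/\chibar^\sigma)^s$ as an $\FF_p[\Hbar^\ad]$-module for suitable $r,s \in \NN$, and the last equivalence ($\Hbar^\ad$ acts trivially iff $s=0$) is then immediate.

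The only delicate point I anticipate is the bookkeeping linking the $\Gbar^\ad$- and $\Hbar^\ad$-decompositions: a priori, a stray simple $C(\chibar/\chibar^\sigma)$ could appear without its Galois twin $C(\chibar^\sigma/\chibar)$. This does not happen because conjugation by a lift of $\sigma$ interchanges these two $\Hbar^\ad$-characters while preserving $\Gamma_\rho/\Phi(\Gamma_\rho)$, so their multiplicities must coincide and they bundle into copies of $I(\chibar/\chibar^\sigma)$. Everything else is routine application of the preceding lemmas.
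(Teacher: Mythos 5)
Your argument is correct and follows essentially the same route as the paper, whose proof of this corollary is a one-line citation of Lemma~\ref{lem:gamma-pro-p}, Lemma~\ref{lem:indec} and Corollary~\ref{cor:indec} --- precisely the chain you spell out (filtration to control Jordan--H\"older factors, Krull--Schmidt--Azumaya plus Corollary~\ref{cor:indec} to pin down the indecomposables, then restriction to $\Hbar^\ad$). The extra care you take over the restriction step and over pairing $C(\chibar/\chibar^\sigma)$ with its conjugate is left implicit in the paper but is a sound elaboration.
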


\begin{proof}
This follows from Lemmata \ref{lem:gamma-pro-p}, \ref{lem:indec} and Corollary~\ref{cor:indec}. 
\end{proof}

Note that the conclusion is in terms of $\FF_p[\Gbar^\ad]$-representations, not $\FF[\Gbar^\ad]$-repre\-sen\-tations,
because it is not clear (and usually wrong) that $\Gamma_\rho/\Phi(\Gamma_\rho)$ has the structure of $\FF$-vector space.

\subsection{The infinitesimal quotient of the universal representation}

The previous computations are valid for all representations. In this subsection we specialise to the universal representation
because for it we can replace the Frattini quotient by an infinitesimal deformation.

\begin{lem}
Let $\rho:G \to \GL_2(R)$ be an infinitesimal deformation of~$\rhobar$.
Then $\fm_R$ is an $\FF$-vector space of some finite dimension $r$ and
we have the inclusion of $\FF_p[\Gbar^\ad]$-modules $\Gamma_\rho \subseteq \ad(\rhobar)_\FF^r$.
\end{lem}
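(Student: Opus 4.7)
The plan is to first observe that the infinitesimal hypotheses ($\fm_R^2=0$ and $pR=0$) immediately give $\fm_R$ the structure of an $\FF$-vector space: the multiplication $R \times \fm_R \to \fm_R$ factors through $R/\fm_R = \FF$ because $\fm_R \cdot \fm_R = 0$, and $p \in \fm_R$ acts as zero. Since $R$ is Noetherian, $\fm_R$ is finitely generated over $R$, hence over $\FF$, so it has finite $\FF$-dimension which I shall call $r$.

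Next I would linearise $\Gamma_\rho$. Since $\rho$ reduces to $\rhobar$, every element of $\Gamma_\rho$ is of the form $1+A$ with $A \in M_2(\fm_R)$. Because $\fm_R^2=0$ we have
$$(1+A)(1+B) = 1 + A + B + AB = 1 + (A+B) \qquad \text{for all } A,B \in M_2(\fm_R),$$
so the map $1+A \mapsto A$ is an isomorphism of abelian groups $1+M_2(\fm_R) \xrightarrow{\sim} M_2(\fm_R)$, and $\Gamma_\rho$ is carried into $M_2(\fm_R)$. In particular $\Gamma_\rho$ is abelian, so by Lemma~\ref{lem:split-basis}(d) the conjugation action of $\Image(\rho)$ on $\Gamma_\rho$ descends to an action of~$\Gbar^\ad$. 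Under the linearisation this conjugation action becomes the ordinary conjugation action of $\Gbar$ on $M_2(\fm_R)$ (independent of the choice of preimage since $1+M_2(\fm_R)$ acts trivially on $M_2(\fm_R)$ by conjugation, again using $\fm_R^2=0$).

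Finally, choosing an $\FF$-basis of $\fm_R$ identifies $M_2(\fm_R) = M_2(\FF) \otimes_\FF \fm_R \cong M_2(\FF)^r$ as $\FF_p[\Gbar^\ad]$-modules, where $\Gbar^\ad$ acts diagonally with trivial action on $\fm_R$ and by conjugation on $M_2(\FF)$. Since $M_2(\FF)$ with the conjugation action of $\Gbar$ (via $\rhobar$) is by definition $\ad(\rhobar)_\FF$, the embedding
$$\Gamma_\rho \hookrightarrow M_2(\fm_R) \cong \ad(\rhobar)_\FF^{r}$$
is the required inclusion of $\FF_p[\Gbar^\ad]$-modules. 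None of the steps presents a real obstacle; the only point requiring slight care is verifying that the conjugation action on $\Gamma_\rho$ is independent of the lift to $\Image(\rho)$, which is exactly the content of the observation that $1+M_2(\fm_R)$ commutes with itself when $\fm_R^2=0$.
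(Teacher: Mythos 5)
Your proof is correct and follows exactly the route of the paper, whose entire proof is the one-line observation that $\Gamma_\rho$ sits inside $1+M_2(\fm_R)$; you have simply supplied the routine verifications (the $\FF$-structure on $\fm_R$, the linearisation $1+A\mapsto A$ via $\fm_R^2=0$, and the identification of the conjugation action with $\ad(\rhobar)_\FF^r$) that the paper leaves implicit.
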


\begin{proof}
The kernel $\Gamma_\rho$ of reduction modulo~$\fm_R$ clearly sits in $1+M_2(\fm_R)$, proving the result. 
\end{proof}

We now prove a converse of Corollary~\ref{cor:fratt-ad}.
In the case $p=2$, the upper exact sequence in~\eqref{eq:seq} need not split. This is taken
into account in the following proposition.

\begin{prop}\label{prop:infi-lift}
Let $Z$ be an elementary abelian $p$-group and consider a group extension
$$ 0 \to Z \to \calG \to \Gbar \to 0,$$
such that the induced action of $\Gbar$ on $Z$ factors through $\Gbar^\ad$ making it a $\FF_p[\Gbar^\ad]$-module.
Assume that $Z$ is an indecomposable $\FF_p[\Gbar^\ad]$-module occurring in $\ad(\rhobar)_\FF$
(see Lemma~\ref{lem:indec}).

Then there is a lift $\rho_Z:G \to \GL_2(\FF[X]/(X^2))$ of~$\rhobar$
such that $\Image(\rho_Z) \cong \calG$ and $Z \cong \Gamma_{\rho_Z}$ as $\FF_p[\Gbar^\ad]$-modules.

The group extension is split in all cases except possibly if $p=2$ and $Z=C(1)=\FF_2$. In that case,
there are two non-isomorphic extensions.
\end{prop}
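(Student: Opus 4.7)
The plan is to split the proof into a cohomological classification of the extensions and an explicit construction of the lift inside~$\GL_2(\FF[X]/(X^2))$.

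For the classification, I first observe that $\Hbar$ is a subgroup of the diagonal matrices in $\GL_2(\FF)$ (after choosing the basis of Lemma~\ref{lem:split-basis}(b)), hence has order coprime to~$p$. For $p > 2$ the quotient $[\Gbar:\Hbar]=2$ is also coprime to~$p$, so $|\Gbar|$ is coprime to~$p$ and $H^2(\Gbar, Z) = 0$ directly; every extension splits. For $p=2$, Schur--Zassenhaus applied to the normal odd-order subgroup $\Hbar \lhd \Gbar$ gives $\Gbar = \Hbar \rtimes \langle\sigma_0\rangle$ with $\sigma_0$ of order~$2$. The vanishing of $H^i(\Hbar, Z)$ for $i \ge 1$ and Hochschild--Serre then reduce matters to
\[ H^2(\Gbar, Z) \;\cong\; H^2(\langle\sigma_0\rangle, Z^{\Hbar}). \]
Using Lemma~\ref{lem:indec}(c) I would examine each indecomposable in turn: for $Z = I(\chibar/\chibar^\sigma)$ both $\Hbar$-eigencharacters are non-trivial, so $Z^{\Hbar} = 0$; for $Z = N$ the restriction $Z|_{\langle\sigma_0\rangle} \cong \Ind_{\{1\}}^{\langle\sigma_0\rangle}\FF_2$ is $\FF_2[\langle\sigma_0\rangle]$-free, hence cohomologically trivial by Shapiro; and for $Z = C(1) = \FF_2$ with trivial $\sigma_0$-action one has $H^2(\ZZ/2, \FF_2) = \FF_2$, giving exactly two non-isomorphic extensions.

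For the realisation, I would use the inclusion $Z \hookrightarrow \ad(\rhobar)_\FF = M_2(\FF)$ to construct an injective homomorphism $\iota\colon \calG \hookrightarrow \GL_2(\FF[X]/(X^2))$. In the split case $\calG = Z \rtimes \Gbar$, I set
\[ \iota(z,g) := (1 + Xz) \cdot g, \]
viewing $g$ as an element of $\Gbar \subseteq \GL_2(\FF) \subseteq \GL_2(\FF[X]/(X^2))$; the identity $(1+Xz_1)g_1(1+Xz_2)g_2 = (1 + X(z_1 + g_1 z_2 g_1^{-1}))g_1g_2$ confirms that $\iota$ is a homomorphism, since conjugation $z \mapsto gzg^{-1}$ in $M_2(\FF)$ is precisely the $\Gbar$-action on~$Z$. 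For the non-split case ($p=2$, $Z = C(1)$), the group $\calG$ is generated by $\Hbar$ together with an element $\tilde\sigma_0$ of order~$4$ above $\sigma_0$ whose square is the non-trivial element of $Z = \FF_2 \cdot I_2$. In the basis of Lemma~\ref{lem:split-basis}(b) one has $\rhobar(\sigma_0) = \mat{0}{a}{a^{-1}}{0}$, and the linear equation $\sigma_0 B + B \sigma_0 = I_2$ admits the solution $B = \mat{0}{a}{0}{0}$. Setting $\iota(\tilde\sigma_0) := \rhobar(\sigma_0) + XB$ yields $\iota(\tilde\sigma_0)^2 = (1+X)I_2$ as required, and the embedding is completed by $\iota|_{\Hbar} := \rhobar|_{\Hbar}$.

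To obtain $\rho_Z$ itself, I would compose $\iota$ with a surjection $G \twoheadrightarrow \calG$ lifting $G \twoheadrightarrow \Gbar$, which corresponds via inflation--restriction to a class in $H^1(G, Z)$ whose restriction to $\ker(\rhobar)$ surjects onto~$Z$. By construction $\Image(\rho_Z) \cong \calG$, and $\Gamma_{\rho_Z}$ is identified with $Z$ via $1 + Xz \leftrightarrow z$, compatibly with the $\FF_p[\Gbar^\ad]$-action. The main technical obstacle I anticipate is the $p=2$ non-split case of the realisation step: in characteristic~$2$ the anticommutator $\sigma_0 B + B \sigma_0$ coincides with the commutator, and making it equal the identity rather than the zero matrix depends essentially on the explicit dihedral form of $\rhobar(\sigma_0)$ provided by Lemma~\ref{lem:split-basis}(b), not merely on $\sigma_0$ being an involution.
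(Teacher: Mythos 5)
Your proof is correct and follows the same two-step route as the paper's: inflation--restriction (valid because $|\Hbar|$ is prime to~$p$) reduces the classification to $\h^2(\Gbar/\Hbar,Z^\Hbar)$, and the extensions are then realised inside $\GL_2(\FF[X]/(X^2))$ using the section of reduction mod~$X$ together with the embedding $Z\hookrightarrow 1+X\,\ad(\rhobar)_\FF$. The cohomological case analysis for $p=2$ agrees with the paper's (you dispose of the $N$-case by freeness of $N$ over $\FF_2[\langle\sigma_0\rangle]$ and Shapiro, where the paper computes $\hhat^0(\Gbar/\Hbar,N)$ -- same conclusion). The one genuinely different ingredient is the realisation of the non-split extension for $p=2$, $Z=C(1)$: the paper lifts an arbitrary $\sigma\in\Gbar\setminus\Hbar$, of order $2n$, to the matrix $\mat 0{1+X}{\chibar(\sigma^2)}0$, of order $4n$; you first extract an involution $\sigma_0=\mat 0a{a^{-1}}0$ by Schur--Zassenhaus and solve $\sigma_0B+B\sigma_0=I_2$ to obtain a lift of order~$4$. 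Both detect non-splitness by an order jump and both are valid. Two points you should still write out. First, in the non-split case you verify $\iota(\tilde\sigma_0)^2=(1+X)I_2$ but not the remaining defining relation $\iota(\tilde\sigma_0)\rhobar(h)\iota(\tilde\sigma_0)^{-1}=\rhobar(\sigma_0h\sigma_0^{-1})$, which is needed for $\iota$ to be a homomorphism on all of~$\calG$; it does hold, because a direct computation gives $B\rhobar(h)\sigma_0+\sigma_0\rhobar(h)B=\sigma_0\rhobar(h)\sigma_0$, so the $X$-coefficient of the difference is $2\,\sigma_0\rhobar(h)\sigma_0=0$ in characteristic~$2$ -- but it is not automatic and uses the specific choice of~$B$. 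Second, your final step requires a continuous surjection $G\twoheadrightarrow\calG$ lifting $\rhobar$; this does not follow from the extension class alone and can fail for an abstract extension, so, exactly as the paper implicitly does, one must take $\calG$ as given as a quotient of~$G$ compatibly with $G\twoheadrightarrow\Gbar$ -- which it is in every application of the proposition.
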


\begin{proof}
In order to compute the possible group extensions, we first observe that since $p \nmid |\Hbar|$,
by inflation-restriction \cite[Proposition~1.6.7]{NSW} we obtain an isomorphism
$$\h^2(\Gbar,Z) \cong \h^2(\Gbar/\Hbar, Z^\Hbar).$$
For $p>2$, the latter group is always zero because $p \nmid  |\Gbar/\Hbar| =2$.
Thus the group extension in question is always split.

For $p=2$, we analyse the three possibilities for~$Z$ (see Lemma~\ref{lem:indec}) individually.
As $I(\chibar/\chibar^\sigma)^\Hbar = 0$, we find $\h^2(\Gbar,I(\chibar/\chibar^\sigma)) = 0$
and the corresponding group extension is split.
In order to compute the result for $N$, we make use of the fact that for a cyclic group $\h^2$ is isomorphic to
the $0$-th Tate (or modified) cohomology group (see e.g.\ \cite[\S1.2]{NSW}), which can be described explicitly. More precisely,
\begin{multline*}
\h^2(\Gbar,N) \cong \h^2(\Gbar/\Hbar,N) \cong \hhat^0(\Gbar/\Hbar,N)\\
= N^{\Gbar/\Hbar} / (1+\rhobar(\sigma))N 
= N^\Gbar / (1+\rhobar(\sigma))N \cong  \FF_2/\FF_2 = 0,
\end{multline*}
so that also the corresponding group extension is split.
With the same arguments, the case $Z=C(1)=\FF_2$ leads to
$$ \h^2(\Gbar,\FF_2) \cong \FF_2 / (1+\sigma)\FF_2 = \FF_2.$$
Consequently, there are two non-isomorphic group extensions of $\Gbar$ by~$\FF_2$.

When the sequence splits, we can proceed as follows. We have the exact sequence of groups
$$ \xymatrix@=.5cm{
0 \ar@{->}[r] &  M_2(\FF) \ar@{->}[r] & \GL_2(\FF[X]/(X^2)) \ar@{->}[r] & \GL_2(\FF) \ar@{->}[r] \ar@{->}@/^1pc/[l]^(.5){s} & 0 \\
& Z \ar@{^(->}[u] && \Gbar \ar@{^(->}[u]\\ 
}$$
and the action of $\Gbar^\ad$ on~$Z$ induced from this exact sequence is the action on $Z$ as a submodule of~$\ad(\rhobar)_\FF$.
We can thus simply obtain the split group extension of $\Gbar$ by~$Z$ as the subgroup of $\GL_2(\FF[X]/(X^2))$ generated
by $Z$ and $s(\Gbar)$.

If the sequence is non-split, then the analysis above implies that $p=2$ and $Z=\FF_2$. In order to treat this case, we make use of the case $p=2$, $Z=N$, 
where, as in Lemma~\ref{lem:dihedral}, we view $N = 1+X \{\mat a00d \in M_2(\FF) \;|\; a,d \in \FF_2\}$.
We now define the group $\calG' \subset \GL_2(\FF[X]/(X^2))$ as the group generated by $s(\Hbar)$, the
scalars $\FF_2 =  1+X\{\mat a00a \in M_2(\FF) \;|\; a \in \FF_2\} \subset N$ and the element $\mat 0 {1+X} {\chibar(\sigma^2)} 0$.
Let $n$ be the order of $\chibar(\sigma^2)$ in $\FF^{\times}$.
Reducing the matrices in~$\calG'$ modulo~$X$, we clearly obtain~$\Gbar$ and we have that the kernel of the reduction map  
is~$\FF_2$. Moreover, the element $\mat 0 {1+X} {\chibar(\sigma^2)} 0$ is a lift of~$\rhobar(\sigma)$, but it has order~$4n$, contrary
to the split case which does not contain any element of order~$4n$. This shows that $\calG'$ is a non-split extension of $\Gbar$ by $\FF_2$. Recall that our calculation shows that there are only $2$ isomorphism classes of extensions of $\Gbar$ by $\FF_2$. Since both $\calG$ and $\calG'$ are non-trivial extensions of $\Gbar$ by $\FF_2$, it follows that $\calG' \cong \calG$. 
\end{proof}

\begin{setup}\label{setup:three}
In the context of Set-ups \ref{setup:one} and~\ref{setup:two}, assume now also that $G$ satisfies Mazur's finiteness
condition $\Phi_p$ (see \cite[\S1.1]{M}).
\end{setup}

Since $\rhobar$ is irreducible, the deformation functor of $\rhobar$ for the category~$\calC$ is representable (see \cite[\S1.2]{M})).
One thus has a universal deformation of $\rhobar$
$$\rho^\univ: G \to \GL_2(R^\univ).$$
Write $\fm_\univ$ for $\fm_{R^\univ}$ and $\rho^\univ_\inft := (\rho^\univ)_\inft$, as well as
$\Gamma^\univ := \Gamma_{\rho^\univ}$ and $\Gamma_\inft^\univ := \Gamma_{\rho_\inft^\univ}$.

\begin{prop}\label{prop:univ-inft}
Let $\rho: G \to \GL_2(R)$ be a lift of~$\rhobar$. Then the morphism $R^\univ \to R$ existing by universality
induces a surjection $\Gamma^\univ_\inft \twoheadrightarrow \Gamma_\rho/\Phi(\Gamma_\rho)$.
\end{prop}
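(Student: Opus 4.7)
The plan is to avoid using the universality map $\phi : R^\univ \to R$ directly at the level of infinitesimal reductions; its naive reduction $R^\univ/(\fm_\univ^2, p) \to R/(\fm_R^2, p)$ can collapse too much information (for instance when $R = W(\FF)$ one has $R/(\fm_R^2, p) = \FF$ and $\Gamma_{\rho_\inft}$ becomes trivial even when $\Gamma_\rho/\Phi(\Gamma_\rho)$ is non-trivial). Instead, I would realise $\Gamma_\rho/\Phi(\Gamma_\rho)$ as the kernel subgroup attached to some cleverly constructed infinitesimal lift $\rho'$ of $\rhobar$, and then invoke the universal property of $\rho^\univ_\inft$ as the universal infinitesimal deformation to obtain the surjection.

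First I would apply Corollary~\ref{cor:fratt-ad} to decompose $M := \Gamma_\rho/\Phi(\Gamma_\rho) \cong \bigoplus_i Z_i$ into indecomposable $\FF_p[\Gbar^\ad]$-summands from the list in Lemma~\ref{lem:indec}. Pushing out the extension $0 \to M \to \Image(\rho)/\Phi(\Gamma_\rho) \to \Gbar \to 0$ along $M \twoheadrightarrow Z_i$ yields extensions $0 \to Z_i \to \calG_i \to \Gbar \to 0$, each realised by an infinitesimal lift $\sigma_i : G \to \GL_2(\FF[X]/(X^2))$ via Proposition~\ref{prop:infi-lift}. Writing each $\sigma_i(g) = (1 + X c_i(g))\hat\rhobar(g)$ for a cocycle $c_i \in Z^1(G, \ad(\rhobar)_\FF)$ and a fixed section $\hat\rhobar$, I would combine them into
\[
\rho'(g) := \bigl(1 + \textstyle\sum_i \epsilon_i c_i(g)\bigr)\hat\rhobar(g) \in \GL_2(\FF \oplus V), \quad V = \bigoplus_i \FF \epsilon_i, \quad V^2 = 0.
\]
The relations $\epsilon_i \epsilon_j = 0$ force $\rho'$ to be a homomorphism, and each coordinate projection $V \twoheadrightarrow \FF$ recovers $\sigma_i$. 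By the universal property of $\rho^\univ_\inft$ (which represents the infinitesimal deformation functor), $\rho'$ arises uniquely as $v \circ \rho^\univ_\inft$ for some $v : R^\univ/(\fm_\univ^2, p) \to \FF \oplus V$, producing a surjection $\Gamma^\univ_\inft \twoheadrightarrow \Gamma_{\rho'}$ which composed with the identification $\Gamma_{\rho'} \cong M$ gives the proposition.

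The main obstacle I expect is verifying that $\Gamma_{\rho'}$ equals the full direct sum $\bigoplus_i Z_i$ rather than a proper ``diagonal'' subgroup that merely projects surjectively onto each $Z_i$. This reduces to choosing the cocycles $c_i$ so that their restrictions to $\ker \rhobar \cap H$ (on which they become honest homomorphisms, since $\ad(\rhobar)$ is trivial there) are jointly independent. For $p > 2$ all relevant indecomposables are simple and joint independence can be arranged by a dimension count inside $H^1(G, \ad(\rhobar)_\FF)$ together with the semi-simplicity of $\FF_p[\Gbar^\ad]$-modules; in characteristic two the non-split indecomposable $N$ requires additionally matching the extension class exhibited in the second half of the proof of Proposition~\ref{prop:infi-lift}. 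An alternative path that bypasses this combinatorial step altogether is the Generalised Matrix Algebra argument underlying Theorem~\ref{thm:altpf}.
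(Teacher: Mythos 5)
Your proposal is correct and follows essentially the same route as the paper: decompose $\Gamma_\rho/\Phi(\Gamma_\rho)$ into indecomposables via Corollary~\ref{cor:fratt-ad}, realise the resulting extension of $\Gbar$ by an infinitesimal deformation with coefficients in $T=\FF[X_1,\dots,X_r]/(X_iX_j)$ (your $\FF\oplus V$) using Proposition~\ref{prop:infi-lift}, and conclude by universality, noting correctly that the comparison must pass through this auxiliary ring rather than through the naive reduction of $R$. The ``diagonal'' obstacle you flag in fact dissolves without any independence argument in $H^1$: the cocycles $c_i$ are not chosen freely but are the components of the single given extension $0 \to M \to \Image(\rho)/\Phi(\Gamma_\rho) \to \Gbar \to 0$ pushed out along the projections $M \twoheadrightarrow Z_i$, so (provided each $\sigma_i$ is taken compatibly with the quotient map $G \to \calG_i$) the joint restriction of $(c_1,\dots,c_r)$ to $\ker\rhobar$ is the canonical isomorphism $M \xrightarrow{\sim} \bigoplus_i Z_i$ and $\Gamma_{\rho'}\cong M$ automatically.
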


\begin{proof}
From the exact sequence $0 \to \Gamma_\rho \to \Image(\rho) \to \Gbar \to 0$
we obtain the group extension
\begin{equation}\label{eq:calG}
0 \to \Gamma_\rho/\Phi(\Gamma_\rho) \to \calG \to \Gbar \to 0.
\end{equation}

Let $V = \Gamma_\rho/\Phi(\Gamma_\rho)$ and decompose it into a direct sum of indecomposable $\FF_p[\Gbar^\ad]$-modules.
Corollary~\ref{cor:fratt-ad} allows us to apply Proposition~\ref{prop:infi-lift} to each of the indecomposable summands,
yielding that the group extension in~\eqref{eq:calG} can be realised by an infinitesimal deformation
$\rho_V:G \to \GL_2(T)$ for  $T=\FF[X_1,\dots,X_r]/(X_iX_j \;|\; i,j)$
for some~$r$, {\it i.e.} $\calG = \Image(\rho_V)$ and, in particular, $\Gamma_{\rho_V} \cong V$.

Let $\varphi: R^\univ \to T$ be the morphism existing by universality.
As $\fm_T^2 = 0$ and $p \fm_T = 0$, it follows that $\varphi$ factors over $(\fm_\univ^2,p)$ and thus
induces a surjection $\Image(\rho^\univ_\inft) \twoheadrightarrow \Image(\rho_V)$.
In particular, we obtain a surjection $\Gamma^\univ_\inft \twoheadrightarrow V$, as claimed. 
\end{proof}

We can now give the remaining proofs in the representation theory part of the paper.

\begin{proof}[Proof of Corollary~\ref{cor:univ-inft}.]
Proposition~\ref{prop:univ-inft} gives the surjection $\Gamma^\univ_\inft \twoheadrightarrow \Gamma^\univ/\Phi(\Gamma^\univ)$,
which has to be an isomorphism because $\Gamma^\univ_\inft$ is an elementary abelian $p$-quotient of~$\Gamma^{\univ}$ while
$\Gamma^\univ/\Phi(\Gamma^\univ)$ is the largest such. 
\end{proof}

\begin{lem}\label{lem:klein}
Suppose any deformation of~$\rhobar$~to~$\FF[X]/(X^2)$ is dihedral. Then there exists a subgroup $H \lhd G$ of index $2$ such that if $\rho : G \to \GL_2(\FF[X]/(X^2))$ is a deformation of $\rhobar$, then $\rho \simeq \Ind_H^G (\chi)$ for some character $\chi : H \to (\FF[X]/(X^2))^\times$.
\end{lem}

\begin{proof}
Suppose the projective image of $\rhobar$ is a non-abelian dihedral group.
Then there is a unique index $2$ subgroup $H \lhd G$ such that $\rhobar \simeq \Ind_H^G(\chibar)$ for some character $\chibar : H \to \FF^{\times}$.
Therefore, the lemma follows directly in this case.

Now suppose the projective image of $\rhobar$ is $\ZZ/2\ZZ \times \ZZ/2\ZZ$.
Then there are exactly $3$ index $2$ subgroups $H_1$, $H_2$ and $H_3$ of $G$ and characters $\chibar_i : H_i \to \FF^{\times}$ such that $\rhobar \simeq \Ind_{H_i}^G (\chibar_i)$ for every $1 \leq i \leq 3$.
Note that in this case $p$ is odd. 
Suppose $\rho : G \to \GL_2(\FF[X]/(X^2))$ is a deformation of $\rhobar$.
Then we can twist $\rho$ by a suitable character to get a deformation of $\rhobar$ with constant determinant.
Hence, it suffices to prove the lemma for deformations of $\rhobar$ with constant determinant.
Moreover, by part \eqref{lem:indec:b} of Lemma~\ref{lem:indec}, we get that as an $\FF_p[\Gbar^\ad]$-module, $\ad^0(\rhobar)_{\FF} \simeq \oplus_{i=1}^{3}C(\epsilon_i)_{\FF}$, where $\epsilon_i : G \twoheadrightarrow G/H_i \twoheadrightarrow \{\pm1\} \subset \FF^{\times}$ for $1 \leq i \leq 3$.

It follows from Theorem~\ref{thm:inf-dih} that any non-trivial deformation $\rho : G \to \GL_2(\FF[X]/(X^2))$ of $\rhobar$ with constant determinant can only be induced from a single subgroup of $G$ of index~$2$. Suppose not all such deformations are induced from the same subgroup. Then, without loss of generality for $i=1,2$, there are deformations $\rho_i : G \to \GL_2(\FF[X]/(X^2))$ of $\rhobar$ with constant determinant such that $\rho_i \simeq \Ind_{H_i}^G (\chi_i)$ for some character $\chi_i : H_i \to (\FF[X]/(X^2))^\times$.
Then for $i=1,2$, $\rho_i$ corresponds to a non-zero element $x_i \in H^1(G,C(\epsilon_i)_{\FF}) \subset H^1(G,\ad^0(\rhobar)_{\FF})$.
Let $\rho' : G \to \GL_2(\FF[X]/(X^2))$ be the deformation of $\rhobar$ corresponding to the element $x_1+x_2$ of $H^1(G,\ad^0(\rhobar)_{\FF})$.
Note that $x_1 + x_2 \in H^1(G, C(\epsilon_1)_{\FF} \oplus C(\epsilon_2)_{\FF}) \setminus \cup_{i=1}^{2} H^1(G, C(\epsilon_i)_{\FF})$.
Hence, the action of $\Hbar^\ad_i$ on $\Gamma_{\rho'}$ is not trivial for every $1 \leq i \leq 3$.
Therefore, Theorem~\ref{thm:inf-dih} implies that $\rho'$ is not dihedral.
This finishes the proof of the lemma. 
\end{proof}

\begin{proof}[Proof of Theorem~\ref{thm:inf-dih-univ}.]
(a) The implications `\eqref{item:idu:i} $\Rightarrow$ \eqref{item:idu:ii}' and `\eqref{item:idu:iii} $\Rightarrow$ \eqref{item:idu:iv}' are trivial and the implication 
`\eqref{item:idu:iv} $\Rightarrow$ \eqref{item:idu:i}' is immediate from Theorem~\ref{thm:inf-dih} and Corollary~\ref{cor:univ-inft}.
In order to see `\eqref{item:idu:ii} $\Rightarrow$\eqref{item:idu:iii}', consider any infinitesimal deformation~$\rho$ of~$\rhobar$.
Then, by Corollary~\ref{cor:fratt-ad}, the associated $\Gamma_\rho$ is an $\FF_p[\Gbar^\ad]$-module
the indecomposable submodules of which occur in $\ad(\rhobar)$.
By Proposition~\ref{prop:infi-lift}, each such indecomposable submodule $Z$ gives a representation of the type considered in~\eqref{item:idu:ii},
and is thus dihedral. 
By Lemma~\ref{lem:klein}, there exists an index $2$ subgroup $H \lhd G$ such that the representation given by each indecomposable submodule $Z$ of $\Gamma_\rho$ is induced from a character of $H$.
By Theorem~\ref{thm:inf-dih} this means that any such $Z$ is trivial as $\FF_p[\Hbar^\ad]$-module.
Thus $\Gamma_\rho$ is trivial as $\FF_p[\Hbar^\ad]$-module, whence $\rho$ is dihedral by Theorem~\ref{thm:inf-dih}.

(b) Let $R_\chibar^\univ$ be the universal deformation ring of~$\chibar$ as discussed in Proposition~\ref{prop:univ-char}.
Now we are assuming that $\rho^\univ=\Ind_H^G(\chi)_{R^\univ}$ for some character~$\chi$. Since $\rho^\univ$ is a deformation of~$\rhobar$,
the character $\chi$ is a deformation of~$\chibar$ (if, by restriction to~$H$, we find that $\chi^\sigma$ deforms $\chibar$,
then we simply replace $\chi$ by $\chi^\sigma$), giving a morphism
$\alpha: R_\chibar^\univ \to R^\univ$. On the other hand, given the deformation $\chi^\univ$ of~$\chibar$,
we obtain a deformation $\Ind_H^G(\chi^\univ)_{R_\chibar^\univ}$ of~$\rhobar$ and thus a morphism $\beta: R^\univ \to R_\chibar^\univ$.
For the composite we have $\alpha \circ \beta \circ \rho^\univ=\rho^\univ$ and hence $\alpha \circ \beta$ is the identity.
Similarly, $\beta \circ \alpha \circ \chi^\univ = \chi^\univ$, whence $\beta \circ \alpha$ is the identity,
implying that both $\alpha$ and $\beta$ are isomorphisms and $R^\univ \cong R^\univ_\chibar$, as claimed. 
\end{proof}

\begin{rem}
Let $R = \FF[\epsilon]/(\epsilon^3)$ for a prime $p>2$ and consider the $p$-group
$$\Gamma = \{1 + \epsilon \mat r00{-r} +\epsilon^2 \mat abc{r^2-a} \;|\; a,b,c,r \in \FF_p\} \subset \GL_2(R).$$
It is stable under conjugation by matrices of the form $\mat *00*$ and $\mat 0**0$, i.e.\ by~$\Gbar$ viewed inside~$\GL_2(R)$.
Moreover, $\Gamma$ is an elementary abelian $p$-group, so that $\Phi(\Gamma) = 0$ and $\Gamma$ is its own Frattini quotient.
Let $\calG \subset \GL_2(R)$ be the subgroup generated by~$\Gbar$ and~$\Gamma$.
Then any lift $\rho: G \to \GL_2(R)$ of $\rhobar$ with image~$\calG$ (and then also $\Gamma=\Gamma_\rho$)
provides an example where $\rho_\inft$ is dihedral but $\rho$ is not (in view of Theorem~\ref{thm:inf-dih}).
\end{rem}

\subsection*{Appendix: An alternative proof}

In this appendix, we prove a generalisation of Theorem~\ref{thm:inf-dih-univ} using a different argument.

To be precise, let $G$ be a profinite group satisfying the finiteness condition $\Phi_p$ of Mazur (see \cite[Section $1.1$]{M}). Let $p$ be a prime, $E$ be a finite extension of $\mathbb{Q}_p$. Let $\rhobar : G \to GL_2(E)$ be an absolutely irreducible dihedral representation. So there exists an open normal subgroup $H$ of $G$ of index $2$ and a character $\chibar : H \to E^\times$ such that $\rho \simeq \Ind_H^G(\chibar)$. Note that $\rhobar|_H = \chibar \oplus \chibar^{\sigma}$.

Let $\mathcal{D}$ be the category of local Artinian algebras with residue field $E$. Then, it follows, from \cite[Lemma $9.3$]{Ki}, that the functor from $\mathcal{D}$ to the category of sets sending an object $R$ of $\mathcal{D}$ to the set of deformations of $\rhobar$ to $\GL_2(R)$ is pro-representable by a local complete Noetherian ring with residue field $E$. Let $R^{\univ}$ be the universal deformation ring of $\rhobar$, $\fm^{\univ}$ be the maximal ideal of $R^{\univ}$ and $\rho^{\univ} : G \to GL_2(R^{\univ})$ be the universal deformation of $\rhobar$. 

\begin{thm}\label{thm:altpf}
Suppose $p$ is a prime, $E$ is a finite extension of $\QQ_p$ or $\FF_p$ and $G$ is  a profinite group satisfying Mazur's finiteness condition $\Phi_p$. Let $\rhobar : G \to GL_2(E)$ be an absolutely irreducible dihedral representation. Then $\rho^{\univ}$ is dihedral if and only if any infinitesimal deformation $\tilde\rho : G \to \GL_2(E[\epsilon]/(\epsilon^2))$ of $\rhobar$ is dihedral.
\end{thm}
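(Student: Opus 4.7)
The forward direction is routine: if $\rho^\univ=\Ind_H^G(\chi^\univ)$, then for any morphism $\phi:R^\univ\to E[\epsilon]/(\epsilon^2)$ the specialisation $\phi\circ\rho^\univ=\Ind_H^G(\phi\circ\chi^\univ)$ is dihedral by construction.

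For the non-trivial direction I would install a generalised matrix algebra (GMA) structure on $\rho^\univ|_H$. Because $\chibar\neq\chibar^\sigma$, Hensel's lemma lifts the residual orthogonal idempotents of $\rhobar|_H=\chibar\oplus\chibar^\sigma$ to orthogonal idempotents in $M_2(R^\univ)$ summing to $1$; in the associated basis one writes
$$ \rho^\univ(h)=\begin{pmatrix}a(h)&b(h)\\ c(h)&d(h)\end{pmatrix},\quad h\in H, $$
with $a,d\in R^\univ$ reducing to $\chibar,\chibar^\sigma$ and $b(h),c(h)\in\fm_\univ$. Expanding $\rho^\univ(gh)=\rho^\univ(g)\rho^\univ(h)$ componentwise yields the GMA relations, in particular $b(gh)=a(g)b(h)+b(g)d(h)$ and the symmetric identity for $c$. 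I would first verify the criterion: $\rho^\univ$ is dihedral if and only if $b\equiv c\equiv 0$ in some such basis. One direction follows from~\eqref{eq:dih-matrix}; for the other, once $b=c=0$ the functions $a,d$ become genuine characters, and the constraint $\rho^\univ(\sigma)\rho^\univ(h)\rho^\univ(\sigma)^{-1}=\rho^\univ(\sigma h\sigma^{-1})$ combined with the residual distinctness of $a(h)-d(h)$ for generic $h\in H$ forces $\rho^\univ(\sigma)$ to be antidiagonal, whence $\rho^\univ$ has the form~\eqref{eq:dih-matrix} of an induction.

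To produce such a basis I would bootstrap on powers of $\fm_\univ$. First, the infinitesimal hypothesis is cohomologically equivalent to $H^1(H,\chibar/\chibar^\sigma)=0$: by Lemma~\ref{lem:dihedral} and Shapiro's lemma the tangent space $H^1(G,\ad\rhobar)$ decomposes as $H^1(H,E)\oplus H^1(H,\chibar/\chibar^\sigma)$, with the first summand being exactly the image of the induction map from deformations of $\chibar$ (the dihedral tangent directions); requiring every infinitesimal deformation to be dihedral collapses the second summand. Now assume inductively that $b(H),c(H)\subseteq\fm_\univ^k$ in the current basis. Reducing the GMA identity $b(gh)=a(g)b(h)+b(g)d(h)$ modulo $\fm_\univ^{k+1}$, the factors $a,d$ reduce to $\chibar,\chibar^\sigma$, and (after normalising by $\chibar^\sigma$) the reduction $\bar b:H\to\fm_\univ^k/\fm_\univ^{k+1}$ is an $E$-linear $1$-cocycle for $\chibar/\chibar^\sigma$. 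By vanishing of $H^1$ it is a coboundary, so there exists $y\in\fm_\univ^k$ with $\bar b(h)\equiv y(\chibar(h)-\chibar^\sigma(h))\pmod{\fm_\univ^{k+1}}$; conjugating by $I+yE_{12}$ then pushes $b$ into $\fm_\univ^{k+1}$ while leaving $c$ undisturbed modulo $\fm_\univ^{k+1}$. The symmetric manoeuvre, conjugation by $I+zE_{21}$ with $z\in\fm_\univ^k$ chosen analogously, improves $c$ to $\fm_\univ^{k+1}$ without spoiling $b$.

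The main technical point is the $\fm_\univ$-adic convergence of these iterated conjugations. Each step lies in $1+\fm_\univ^kM_2(R^\univ)$, so the ordered infinite product $\prod_{k\geq 1}(I+y_kE_{12})(I+z_kE_{21})$ converges in the complete local ring $R^\univ$ to a genuine automorphism of $(R^\univ)^2$, producing a basis in which $b=c=0$ identically. Combined with the criterion of the second paragraph, this exhibits $\rho^\univ$ as $\Ind_H^G(a)$ for the resulting diagonal character $a$, completing the proof.
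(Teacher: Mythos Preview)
Your proof is correct, but it takes a genuinely different route from the paper's.

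Both arguments begin the same way: install the GMA structure on $\rho^\univ|_H$ and translate the infinitesimal hypothesis into the vanishing of $H^1(H,\chibar/\chibar^\sigma)$. From there the paper argues the \emph{contrapositive}: assuming $\rho^\univ$ is not dihedral, the off-diagonal ideals $B,C\subset\fm_\univ$ in the GMA $R^\univ[\rho^\univ(H)]=\begin{pmatrix}R^\univ&B\\C&R^\univ\end{pmatrix}$ are both non-zero (citing \cite[Lemma~2.4.5]{Bel}), and any non-trivial $R^\univ$-linear map $\phi:B\to E$ produces the representation $h\mapsto\begin{pmatrix}\chibar(h)&\phi(b(h))\\0&\chibar^\sigma(h)\end{pmatrix}$, which is visibly a non-split extension and hence a non-zero class in $H^1(H,\chibar/\chibar^\sigma)$.

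You instead prove the \emph{direct} implication by successive approximation: using $H^1(H,\chibar/\chibar^\sigma)=0$ to recognise the reductions $\bar b,\bar c$ in $\fm_\univ^k/\fm_\univ^{k+1}$ as coboundaries, then conjugating by unipotents $I+y_kE_{12}$, $I+z_kE_{21}$ with $y_k,z_k\in\fm_\univ^k$ to push $b,c$ one step deeper, and finally passing to the $\fm_\univ$-adically convergent limit. Your conjugation bookkeeping is fine (indeed $c$ is literally unchanged under $I+yE_{12}$, not merely modulo $\fm_\univ^{k+1}$), and the endgame argument forcing $\rho^\univ(\sigma)$ to be antidiagonal is correct once one notes that $\beta,\gamma$ are units while $a(h)-d(h)$ is a unit for suitable~$h$.

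The paper's argument is shorter and more conceptual, leaning on the Bella\"iche--Chenevier structure theory to identify $B,C$ as ideals and then extracting a single cohomology class in one stroke. Your argument is more hands-on and self-contained: it never needs $B,C$ to be ideals, only the cocycle identity for $b$, and it makes the passage from infinitesimal information to the full deformation completely explicit via the convergent product.
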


\begin{proof}
The forward direction is trivial. So, we only need to prove the backward direction. Let $H \lhd G$ be an index $2$ subgroup such that $\rhobar \simeq \Ind_H^G \chibar$ for some character $\chibar : H \to E^\times$. Now the proof of Lemma~\ref{lem:klein} implies that to prove the theorem, it suffices to prove that if $\rho^\univ$ is not dihedral, then for every such $H$, there exists an infinitesimal deformation of $\rhobar$ which is not induced from a character of $H$. Note that a non-zero element of $H^1(G,\Ind_H^G(\chibar/\chibar^{\sigma}))$ gives us an infinitesimal deformation of $\rhobar$ which is not induced from a character of $H$.
Thus it suffices to prove that if $\rho^\univ$ is not dihedral, then $H^1(G,\Ind_H^G(\chibar/\chibar^{\sigma})) \neq 0$.
By Shapiro's lemma, we have $H^1(G,\Ind_H^G(\chibar/\chibar^{\sigma})) \simeq H^1(H,\chibar/\chibar^{\sigma})$.
So, it suffices to prove $H^1(H,\chibar/\chibar^{\sigma}) \neq 0$.

We will now assume $\rho^\univ$ is not dihedral and construct a non-zero element of $H^1(H,\chibar/\chibar^{\sigma})$. It follows, from part (b) of Lemma~\ref{lem:split-basis}, that there exists an element $g_0 \in H$ such that $\rho^{\univ}(g_0) =\begin{pmatrix} a & 0\\ 0 & b\end{pmatrix}$ with $a \neq b$.
Note that $M_2(R^{\univ})$ is a Generalized Matrix Algebra (GMA) (see \cite[Chapter $1$]{BC} for the definition). Therefore, by \cite[Lemma $2.4.5$]{Bel}, we get that $A = R^{\univ}[\rho^{\univ}(H)]$ is a sub-$R^{\univ}$-GMA of $M_2(R^{\univ})$. 

Recall that $A$ being a sub-$R^{\univ}$-GMA of $M_2(R^{\univ})$ means that $A = \begin{pmatrix} R^{\univ} & B\\ C & R^{\univ}\end{pmatrix}$, where $B$ and $C$ are ideals of $R^{\univ}$ (see \cite[Section $2.2$]{Bel}). As $R^{\univ}$ is Noetherian, $B$ and $C$ are finitely generated $R^{\univ}$-modules. Moreover, since the image of $A$ modulo $\fm^{\univ}$ is diagonal, it follows that $B \subset \fm^{\univ}$ and $C \subset \fm^{\univ}$. 

As we have assumed that $\rho^\univ$ is not dihedral and $H$ is normal in $G$, it follows that both $B \neq 0$ and $C \neq 0$. Indeed, non-dihedralness implies that at least one of them is not $0$ and if the other one is $0$, then normality of $H$ in $G$ implies that $\rho^\univ$ is reducible, which is not possible.

Now suppose for $h \in H$, $\rho^{\univ}(h)=\begin{pmatrix} a(h) & b(h)\\c(h) & d(h)\end{pmatrix}$. So $b(h) \in B$ and $c(h) \in C$. Let $\phi : B \to E$ be a non-trivial map of $R^\univ$-modules. This defines a representation $\rho_{\phi} : H \to \GL_2(E)$ such that $\rho_{\phi}(h)=\begin{pmatrix} \chibar(h) & \phi(b(h))\\ 0 & \chibar^{\sigma}(h)\end{pmatrix}$. So, this defines an element of $H^1(H,\chibar/\chibar^{\sigma})$. Since the $R^\univ$-span of $\rho^{\univ}(H)$ is $\begin{pmatrix} R^\univ & B\\ C & R^\univ\end{pmatrix}$, it follows that $E$-span of $\rho_{\phi}(H)$ contains all the diagonal matrices and $\begin{pmatrix} 0 & 1\\ 0 & 0\end{pmatrix}$. As $\begin{pmatrix} 0 & 1\\ 0 & 0\end{pmatrix}$ does not commute with all diagonal matrices, it follows that $\rho_{\phi}(H)$ is not abelian and hence, it defines a non-zero element of $H^1(H,\chibar/\chibar^{\sigma})$. This finishes the proof of the theorem.
\end{proof}

\section{Number theoretic dihedral universal deformations}\label{sec:nt}

In this section, we turn our attention to dihedral Galois representations of number fields and their deformations and develop and
prove the results outlined in section~\ref{sec:intro-nt}.
We keep the notation introduced previously. In addition, we use the following notation.

\begin{notation}\label{not:nt}
For a number field $N$, denote by $G_N$ the absolute Galois group of $N$ and by $A(N)$ the class group of $N$. If $\mathfrak{p}$ is a prime of $N$, then denote by $N_{\mathfrak{p}}$ the completion of $N$ at $\mathfrak{p}$ and by $G_{N_{\mathfrak{p}}}$ the decomposition group of $\mathfrak{p}$ inside $G_N$. If $\rho$ is a representation of a group $G$ and $H$ is a subgroup of $G$, then we denote by $\rho|_H$ the restriction of $\rho$ to $H$.
If $L$ and $K$ are two fields such that $L$ is an algebraic Galois (but not necessarily finite) extension of $K$,
then we denote the Galois group $\Gal(L/K)$ by $G_{L/K}$.
Let $\mu_p$ be the group of $p$-th roots of unity inside an algebraic closure of the prime field.

For an extension $N/K$ of number fields and a set of places $S$ of~$K$,
denote by $N(S)$ the maximal extension of $N$ unramified outside the primes of $N$ lying above~$S$.
Note that for a Galois extension $N/K$, the extension $N(S)/K$ is also Galois as any conjugate $\sigma(N(S))$ for $\sigma$ fixing~$K$
is also unramified over~$N$ outside the primes of $N$ lying above~$S$. 
Furthermore, let $N(S)^{\ab,p}$ be the maximal abelian extension of $N$ inside $N(S)$ of exponent~$p$.
\end{notation}

\begin{setup}\label{setup:dih}
Let $K$ be a number field, $L$ be a quadratic extension of $K$ and $\chibar : G_L \to \FF^\times$ be a character such that the representation $\rhobar = \Ind_{G_L}^{G_K} (\chibar) : G_K \to \GL_2(\FF)$ is absolutely irreducible. So, $\rhobar|_{G_L} = C(\chibar)_\FF \oplus C(\chibar^{\sigma})_\FF$ where $\chibar^{\sigma}(h)=\chibar(\sigma h \sigma^{-1})$ in the notation of Definition~\ref{defi:modules}.
Let $M^{\rhobar}$ be the extension of $K$ fixed by $\ker(\rhobar)$ and $M^{\ad}$ be the extension of $K$ fixed by $\ker(\ad(\rhobar))$. 
If $p=2$, assume that $M^\ad$ is totally imaginary.

Let $\Gbar^\ad = \Gal(M^{\ad}/K)$ and $\Hbar^\ad = \Gal(M^{\ad}/L)$. So, $\Hbar^\ad$ is a cyclic subgroup of index 2 in $\Gbar^\ad$.
Also let $C := \ker(\Gbar \twoheadrightarrow \Gbar^\ad)$.
Let $S_{\infty}$ be the set of all archimedean places of $K$ (places of $K$ lying above $\infty$).
Let $S_p$ be the set of primes of~$K$ lying above~$p$.
Furthermore, let $S_{\rhobar}$ be the finite set of finite primes of~$K$ at which $M^{\rhobar}$ is ramified over~$K$.
Let $S$ be a finite set of primes of~$K$ such that $S_\infty \subseteq S$ and $S \cap S_p = \emptyset$.
Let $\kappa = K(S \cup S_\rhobar)$.
\end{setup}

We summarise some of the fields and Galois groups in the following diagram (some notation in the diagram is only introduced later).
$$ \xymatrix@=.4cm{
\kappa \ar@{-}@/_12pc/[dddddddddrrr]_{G_{K,S\cup S_\rhobar}} \ar@{-}[dr] \\
& M^{\rhobar}(S) \ar@{-}[dr]\ar@{-}@/^2pc/[ddrr]^{G_{M^{\rhobar},S}} \ar@{-}[ddd] \ar@{-}@/_10pc/[rrdddddddd]_G \ar@{-}@/_8pc/[rrddddddd]_H\\
&& M^{\rhobar,\univ} \ar@{-}[ddd]^C  \ar@{-}[dr]_{\calG'}\\
& & & M^{\rhobar}\ar@{-}[ddd]_C \ar@{-}@/^5pc/[dddddd]^\Gbar  \\
& M^\ad(S) \ar@{-}[dr]\ar@{-}@/_2pc/[ddrr]_{G_{M^\ad,S}} \\
&& M^{\ad,\univ}\ar@{-}[dr]^{\calG'}\\
& & & M^\ad \ar@{-}[dd]_{\Hbar^\ad} \ar@{-}@/^2pc/[ddd]^{\Gbar^\ad}\\
\\
& & & L\ar@{-}[d]_2\\
& & & K\\
}$$
Note that the extensions $M^{\rhobar}(S)$ and $M^{\ad}(S)$ of $K$ are Galois. Put $G = \Gal(M^{\rhobar}(S)/K)$ and $H = \Gal(M^{\rhobar}(S)/L)$.
Note that these pieces of notation exactly correspond to those of section~\ref{sec:rep}.

Let $G_{M^{\ad},S} = \Gal(M^{\ad}(S)/M^{\ad})$ and $G^{\ab}_{M^{\ad},S}$ be the continuous abelianisation of $G_{M^{\ad},S}$. As $\Gal(M^{\ad}(S)/M^{\ad})$ is normal in $\Gal(M^{\ad}(S)/K)$, the closure of the commutator subgroup of $\Gal(M^{\ad}(S)/M^{\ad})$ is also normal in $\Gal(M^{\ad}(S)/K)$. So, we get an action of $\Gbar^\ad = \Gal(M^{\ad}/K)$ on $G^{\ab}_{M^{\ad},S}$ and hence, on the $\FF_p$-vector space $\calG := G^{\ab}_{M^{\ad},S}/(G^{\ab}_{M^{\ad},S})^p$  by conjugation.

Let $S''$ be the subset of~$S$ consisting of the finite primes $q$ such that $\mu_p \subseteq M^\ad_{q'}$ for some (and then every) prime $q'$ of~$M^\ad$ dividing~$q$
(note: $S=S'' \cup S_\infty$ if $p=2$).
Denote by $D_q$ a decomposition group of~$q$ inside~$\Gbar^\ad$.
Let $\chi_p^{(q)}$ be the modulo~$p$ cyclotomic character viewed as a character of~$D_q$ for $q \in S''$ (note $\chi_2^{(q)}$ is the trivial character).

\begin{prop}\label{prop:CL-Ind}
The elementary abelian $p$-group $\calG$ admits $A(M^\ad)/pA(M^\ad)$ as a quotient
and $\calM = \ker\big(\calG \twoheadrightarrow A(M^\ad)/pA(M^\ad)\big)$ is isomorphic to a quotient of
$\prod_{q \in S''} \Ind_{D_q}^{\Gbar^\ad}(C(\chi_p^{(q)})_{\FF_p})$ as $\FF_p[\Gbar^\ad]$-modules.
\end{prop}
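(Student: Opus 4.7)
\medskip

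\noindent\textbf{Proof plan.} The strategy is to identify $\calG$ via global class field theory and then analyse the local contributions coming from ramification at primes in $S$. By class field theory applied to the number field $M^\ad$, the Galois group $G^{\ab}_{M^\ad,S}$ is isomorphic to the quotient of the idèle class group $C_{M^\ad}$ by the closed subgroup generated by the local units $U_{q'}$ at all primes $q'$ of $M^\ad$ not lying above a place of $S$. The Hilbert class field provides the exact sequence
\[
\bigoplus_{q' \in S_{M^\ad}} I_{q'} \longrightarrow G^{\ab}_{M^\ad,S} \longrightarrow A(M^\ad) \longrightarrow 0,
\]
where $S_{M^\ad}$ denotes the set of places of $M^\ad$ lying above $S$ and $I_{q'}$ denotes the image of the local inertia group at $q'$ in the global abelian Galois group. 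Reducing modulo $p$-th powers yields a surjection $\calG \twoheadrightarrow A(M^\ad)/pA(M^\ad)$, which establishes the first claim, and the kernel $\calM$ is generated by the images of the local inertia contributions at primes in $S_{M^\ad}$ modulo $p$.

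Next I would analyse the local contribution at each $q' \in S_{M^\ad}$. Archimedean places contribute nothing: if $p>2$ this is automatic since any decomposition group at an archimedean place has order dividing $2$; if $p=2$, the assumption that $M^\ad$ is totally imaginary means there are no real places, so again no contribution. For a finite prime $q'$ above $q \in S \setminus S_\infty$, recall $S \cap S_p = \emptyset$, so $q' \nmid p$ and the ramification is tame. Local class field theory identifies the image of inertia in the maximal abelian pro-$p$ quotient with the pro-$p$ completion of the tame inertia, whose $p$-Frattini quotient is isomorphic to $\mu_p(M^\ad_{q'})$. This is $\FF_p$ exactly when $\mu_p \subseteq M^\ad_{q'}$, i.e.\ for $q \in S''$, and is zero otherwise. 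Thus only primes above $S''$ contribute to $\calM$.

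The final step is to keep track of the $\Gbar^\ad$-action. The decomposition group $D_q \leq \Gbar^\ad$ is, by definition, the stabiliser of a chosen prime $q'\mid q$, and $\Gbar^\ad$ permutes the primes above $q$ transitively. Hence
\[
\bigoplus_{q' \mid q} (\mu_p(M^\ad_{q'})) \;\cong\; \Ind_{D_q}^{\Gbar^\ad}\bigl(\mu_p(M^\ad_{q'})\bigr)
\]
as $\FF_p[\Gbar^\ad]$-modules. The action of $D_q$ on $\mu_p(M^\ad_{q'}) \cong \FF_p$ factors through $D_q/I_{q'}$ and is given by the action of Frobenius on $p$-th roots of unity, which is precisely $\chi_p^{(q)}$; this identifies the local piece with $C(\chi_p^{(q)})_{\FF_p}$. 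Taking the product over $q \in S''$ and using that $\calM$ is generated by these local images exhibits $\calM$ as a quotient of $\prod_{q \in S''} \Ind_{D_q}^{\Gbar^\ad}\bigl(C(\chi_p^{(q)})_{\FF_p}\bigr)$, as required.

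I expect the main technical point to be the clean identification, as $D_q$-modules, of the image of inertia at $q'$ inside $\calG$ with $C(\chi_p^{(q)})_{\FF_p}$: one must invoke local class field theory to pass from inertia to $\mu_p$, verify that the Frobenius action is indeed the mod-$p$ cyclotomic character (which also correctly gives the trivial character when $p=2$, since $\mu_2 \subset F$ for any field $F$), and finally ensure that the equivariance is preserved under the map from the local contributions into the global $\calG$, which follows from the naturality of the reciprocity maps with respect to conjugation by $\Gbar^\ad$.
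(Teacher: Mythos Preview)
Your proposal is correct and follows essentially the same approach as the paper: global class field theory gives an exact sequence with $A(M^\ad)$ on the right and local contributions on the left, archimedean places are eliminated (trivially for $p>2$, via the totally imaginary hypothesis for $p=2$), and for finite $q' \nmid p$ the local contribution mod $p$ is $\mu_p(M^\ad_{q'})$ with $D_q$-action by $\chi_p^{(q)}$, whence the induced module structure. The only cosmetic difference is that the paper writes the left-hand term as $\prod_{q'} \calO_{q'}^\times$ and analyses $\calO_{q'}^\times/(\calO_{q'}^\times)^p$ via the decomposition into roots of unity times pro-$q$ $1$-units, whereas you phrase it via inertia groups and tame inertia; local class field theory makes these two descriptions equivalent.
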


\begin{proof}
See also \cite[Section $1.2$]{BM}. Let $Y$ be the Galois group of $M^\ad(S)^\ab/M^\ad$, the maximal abelian extension of~$M^\ad$ unramified outside
the primes above~$S$. Note $\calG = Y/Y^p$. By global class field theory, we have the exact sequence of $\Gbar^\ad$-modules: 
\begin{equation}\label{eq:cft1}
\prod_{q' \in S'_\fin} \calO^\times_{q'} \times \prod_{v \in S'_\real}\ZZ/2\ZZ \to Y \to A(M^\ad) \to 0,
\end{equation}
where $S'_\fin$ is the set of finite primes of $M^\ad$ lying above~$S$, $\calO_{q'}$ is the ring of integers in $M^\ad_{q'}$ and $S'_\real$ is the set consisting of all real places of~$M^\ad$. Recall that we have assumed $M^{\ad}$ to be totally complex if $p=2$. Hence, taking the exact sequence~\eqref{eq:cft1} modulo~$p$, we obtain the exact sequence of $\FF_p[\Gbar^\ad]$-modules: 
\begin{equation}
\prod_{q' \in S'_\fin} \calO^\times_{q'}/(\calO^\times_{q'})^p \to Y/Y^p \to A(M^\ad)/pA(M^\ad) \to 0.
\end{equation}
As $\calO^\times_{q'}$ is the direct product of the group of roots of unity in~$M^\ad_{q'}$ and the group of $1$-units (which is a pro-$q$ group),
it follows that $\calO^\times_{q'}/(\calO^\times_{q'})^p$ is non-trivial if and only if $\mu_p \subseteq M^\ad_{q'}$.
In that case $\calO^\times_{q'}/(\calO^\times_{q'})^p$ is isomorphic to $C(\chi_p^{(q'/q)})_{\FF_p}$ as $\FF_p[D_{q'/q}]$-modules,
where, for a moment, we keep track of the prime $q'$ above~$q$ by denoting the decomposition group inside~$\Gbar^\ad$ corresponding to the prime~$q'$
by $D_{q'/q}$ and writing $\chi_p^{(q'/q)}$ for its modulo~$p$ cyclotomic character.

Thus $\calM$ is an $\FF_p[\Gbar^\ad]$-quotient of $\prod_{q \in S''} \prod_{q' \mid q} C(\chi_p^{(q'/q)})_{\FF_p}$.
For a fixed $q \in S''$, $\Gbar^\ad$ permutes the ideals $q'\mid q$ and one obtains that, as $\FF_p[\Gbar^\ad]$-modules,
$\prod_{q \in S''} \prod_{q' \mid q}  C(\chi_p^{(q'/q)})_{\FF_p}$ is isomorphic to $\prod_{q \in S''}\Ind_{D_q}^{\Gbar^\ad}(C(\chi_p^{(q)})_{\FF_p})$,
which does not depend on the choice of $q'$ above~$q$ (whence we simplified notation). 
\end{proof}

For further analysis, we first define the following sets of finite primes of $K$:
\begin{enumerate}
\item Let $S_1$ be the set of primes $\ell$ of $K$ not lying above $p$ such that $\ell$ is split in $L$ and $\chibar/\chibar^{\sigma}|_{D_\ell}$ is either $\chi_p^{(\ell)}$ or $(\chi_p^{(\ell)})^{-1}$. Note that this implies that $\ell$ is unramified in $M^{\ad}$ and for any prime $\lambda$ of $M^{\ad}$ lying above $\ell$, $M^{\ad}_{\lambda}=K_{\ell}(\mu_p)$. 
\item Let $S_2$ be the set of primes $\ell$ of $K$ not lying above $p$ such that $\ell$ is not split in $L$, for any prime $\lambda$ of $M^{\ad}$ lying above $\ell$, $\mu_p \subseteq M^{\ad}_{\lambda}$ and $[M^{\ad}_{\lambda} : K_{\ell}]=2$ (so, the unique prime of $L$ lying above $\ell$ splits completely in $M^{\ad}$). 
\item Let $S_3$  be the set of primes $\ell$ of $K$ not lying above $p$ such that $[K_{\ell}(\mu_p) : K_{\ell}] = 2$, $\ell$ is ramified in $L$ and, for any prime $\lambda$ of $M^{\ad}$ lying above $\ell$, $\Gal(M^{\ad}_{\lambda}/K_{\ell}) \simeq \ZZ/2\ZZ \times \ZZ/2\ZZ$ and $\mu_p \subseteq M^{\ad}_{\lambda}$ (so, the unique prime of $L$ above $\ell$ is unramified in $M^{\ad}$).
\end{enumerate}
Let $S_0 = S_1 \cup S_2 \cup S_3$.

As $M^{\ad}$ is Galois over $K$, if $\mu_p \subseteq M^{\ad}_{\lambda_0}$ for some prime $\lambda_0$ of $M^{\ad}$ lying above $\ell$, then $\mu_p \subseteq M^{\ad}_{\lambda}$ for all primes $\lambda$ of $M^{\ad}$ lying above $\ell$. Moreover, $S_3 = \emptyset$ when $\chibar/\chibar^{\sigma}$ is of odd order. Observe that, when $\mu_p \subseteq K$ (thus, in particular, when $p=2$), we have:
\begin{enumerate}
\item $S_1$ is the set of primes of $K$ not lying above $p$ which are completely split in $M^{\ad}$. 
\item $\ell \in S_2$ if and only if $\ell$ is not a prime above $p$, $\ell$ is either inert or ramified in $L$ and the unique prime of $L$ lying above $\ell$ is completely split in $M^{\ad}$.
\item $S_3 = \emptyset$.
\end{enumerate}

\begin{prop}\label{dimprop}
If $S \cap S_0 = \emptyset$ and $\Hom_{\FF_p[\Gbar^\ad]}(A(M^{\ad})/pA(M^{\ad}) , I(\chibar/\chibar^{\sigma})) = 0$ in the notation of Definition~\ref{defi:modules},
then $\Hom_{\FF_p[\Gbar^\ad]}(\calG,  I(\chibar/\chibar^{\sigma})) = 0$.
\end{prop}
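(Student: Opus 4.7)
The plan is to combine the exact sequence coming from Proposition~\ref{prop:CL-Ind} with Frobenius reciprocity in order to reduce this global vanishing statement about $\calG$ to a finite list of purely local questions, indexed by the primes $q\in S''$, and then to check that each local question fails exactly outside the sets $S_1$, $S_2$, $S_3$.

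First, I would apply the left-exact functor $\Hom_{\FF_p[\Gbar^\ad]}(-,I(\chibar/\chibar^\sigma))$ to the short exact sequence
$$0 \to \calM \to \calG \to A(M^\ad)/pA(M^\ad) \to 0$$
supplied by Proposition~\ref{prop:CL-Ind}. The hypothesis $\Hom_{\FF_p[\Gbar^\ad]}(A(M^{\ad})/pA(M^{\ad}),I(\chibar/\chibar^{\sigma})) = 0$ kills the leftmost term, so it suffices to prove that $\Hom_{\FF_p[\Gbar^\ad]}(\calM, I(\chibar/\chibar^\sigma)) = 0$. Since $\calM$ is an $\FF_p[\Gbar^\ad]$-quotient of the finite direct sum $\bigoplus_{q\in S''}\Ind_{D_q}^{\Gbar^\ad}(C(\chi_p^{(q)})_{\FF_p})$, it is in turn enough to show, for each $q\in S''$, that $\Hom_{\FF_p[\Gbar^\ad]}(\Ind_{D_q}^{\Gbar^\ad}(C(\chi_p^{(q)})_{\FF_p}), I(\chibar/\chibar^\sigma)) = 0$. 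By Frobenius reciprocity, this Hom is isomorphic to $\Hom_{\FF_p[D_q]}(C(\chi_p^{(q)})_{\FF_p},\,I(\chibar/\chibar^\sigma)|_{D_q})$, so the task is reduced to showing that $\chi_p^{(q)}$ does not occur as a subrepresentation of $I(\chibar/\chibar^\sigma)|_{D_q}$ whenever $q\in S''$ and $q\notin S_0$.

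The main work — and the step I expect to be the real obstacle — is the case analysis of $I(\chibar/\chibar^\sigma)|_{D_q}$ via Mackey's decomposition, split according to whether $D_q\subseteq \Hbar^\ad$ or not. In the split case ($q$ splits in $L$), normality of $\Hbar^\ad$ in $\Gbar^\ad$ forces $I(\chibar/\chibar^\sigma)|_{D_q} = C(\chibar/\chibar^\sigma)|_{D_q}\oplus C(\chibar^\sigma/\chibar)|_{D_q}$, and the existence of a nonzero $D_q$-equivariant map from $\chi_p^{(q)}$ is exactly the condition $\chi_p^{(q)} = (\chibar/\chibar^\sigma)^{\pm 1}|_{D_q}$ defining $S_1$. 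In the non-split case, $D_q' := D_q\cap\Hbar^\ad$ has index $2$ in $D_q$ and $I(\chibar/\chibar^\sigma)|_{D_q} = \Ind_{D_q'}^{D_q}((\chibar/\chibar^\sigma)|_{D_q'})$, so a second application of Frobenius reciprocity turns the embedding condition into the identity $\chi_p^{(q)}|_{D_q'} = (\chibar/\chibar^\sigma)|_{D_q'}$.

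To finish, I would subdivide the non-split case according to the order and structure of $D_q$. The subcase $|D_q|=2$, which forces $D_q'=1$ so that the identity holds vacuously, corresponds precisely to $S_2$ (once one checks that $\mu_p\subseteq M^\ad_\lambda$ and the splitting behaviour of the unique prime of $L$ above $\ell$ come out as in its definition). The subcase $|D_q|=4$ with $\ell$ ramified in $L$ and $[K_\ell(\mu_p):K_\ell]=2$, where $\chi_p^{(q)}$ factors through an explicit $\ZZ/2$-quotient of $D_q$, will be matched with $S_3$ by analysing which subgroup of the Klein four $D_q$ is $D_q'$, which is inertia, and which is $\Gal(M^\ad_\lambda/K_\ell(\mu_p))$. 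The remaining possibilities for $D_q$ (in particular those with $|D_q'|\ge 2$ and $D_q'\ne\ker\chi_p^{(q)}|_{D_q}$) give $\chi_p^{(q)}|_{D_q'}\ne(\chibar/\chibar^\sigma)|_{D_q'}$ automatically and thus contribute no map. Since $q\in S\setminus S_0$ and $S\cap S_0=\emptyset$, no subcase can occur, so the local Hom vanishes for every $q\in S''$ and the conclusion follows.
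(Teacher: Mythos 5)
Your proposal is correct and follows essentially the same route as the paper's proof: the short exact sequence from Proposition~\ref{prop:CL-Ind} together with left-exactness of $\Hom$ and the class-group hypothesis reduce the statement to the vanishing of the local terms $\Hom_{\FF_p[D_q]}(C(\chi_p^{(q)})_{\FF_p}, I(\chibar/\chibar^{\sigma})|_{D_q})$ via Frobenius reciprocity, and the case analysis matching $S_1$, $S_2$, $S_3$ is exactly the one the paper carries out. The only cosmetic difference is that in the non-split case you invoke Mackey and a second Frobenius reciprocity to obtain the criterion $\chi_p^{(q)}|_{D_q\cap\Hbar^\ad}=(\chibar/\chibar^{\sigma})|_{D_q\cap\Hbar^\ad}$, whereas the paper decomposes $I(\chibar/\chibar^{\sigma})|_{D_q}$ explicitly; both versions rely on the fact that $\Gbar^\ad$ is genuinely dihedral (every element outside $\Hbar^\ad$ is an involution), which is what rules out a cyclic $D_q$ of order $4$ and makes your ``remaining possibilities'' step go through.
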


\begin{proof}
By Proposition~\ref{prop:CL-Ind}, the notation of which we continue to use, restriction gives an injection
\begin{multline}\label{eq:hom1}
\Hom_{\FF_p[\Gbar^\ad]} (\calG  , I(\chibar/\chibar^{\sigma})) 
 \hookrightarrow \Hom_{\FF_p[\Gbar^\ad]}( \prod_{q \in S''} \Ind_{D_q}^{\Gbar^\ad}(C(\chi_p^{(q)})_{\FF_p}), I(\chibar/\chibar^{\sigma})) \\
 \cong \prod_{q \in S''} \Hom_{\FF_p[\Gbar^\ad]}( \Ind_{D_q}^{\Gbar^\ad}(C(\chi_p^{(q)})_{\FF_p}), I(\chibar/\chibar^{\sigma})).
\end{multline}
Frobenius reciprocity (\cite[Thm.~10.8]{CR}) yields
\begin{equation}\label{eq:frob-rec}
\Hom_{\FF_p[\Gbar^\ad]}( \Ind_{D_q}^{\Gbar^\ad} (C(\chi_p^{(q)})_{\FF_p}), I(\chibar/\chibar^{\sigma})) = \Hom_{\FF_p[D_q]}(C(\chi_p^{(q)})_{\FF_p}, I(\chibar/\chibar^{\sigma})|_{D_q}).
\end{equation}
We see that $\Hom_{\FF_p[D_q]}( C(\chi_p^{(q)})_{\FF_p}  , I(\chibar/\chibar^{\sigma})|_{D_q}) \neq 0$ if and only if
$ C(\chi_p^{(q)})_{\FF_p}$ is an $\FF_p[D_q]$-submodule of $I(\chibar/\chibar^{\sigma})|_{D_q}$.
Now, we will do a case-by-case analysis of when this will happen.
Let us point out that, in general, $I(\chibar/\chibar^{\sigma})|_{D_q}$ is defined over some extension $\FF/\FF_p$.
Note that $C(\chi_p^{(q)})_{\FF_p}$ is an $\FF_p[D_q]$-submodule of some module $C(\psi)_\FF$ (for some $\FF^\times$-valued character $\psi$ of~$D_q$)
if and only if $\chi_p^{(q)} = \psi$.
Note also that $\Gbar^\ad$ acts faithfully on $\ad(\rho)_\FF \cong N_\FF \oplus I(\chibar/\chibar^\sigma)_\FF$ (see Lemma~\ref{lem:dihedral})
and thus $\Hbar^\ad$ acts faithfully on $C({\chibar/\chibar^\sigma})_\FF$.

\begin{enumerate}
\item \textbf{$q$ is split in $L$:} In this case, $D_q \subseteq \Hbar^\ad$, whence
$I(\chibar/\chibar^{\sigma})_\FF|_{D_q} = C({\chibar/\chibar^\sigma}|_{D_q})_\FF \oplus C({\chibar^\sigma/\chibar}|_{D_q})_\FF$.
So, $C(\chi_p^{(q)})_{\FF_p}$ is an $\FF_p[D_q]$-submodule of $I(\chibar/\chibar^{\sigma})|_{D_q}$ if and only if
$\chibar/\chibar^\sigma|_{D_q} = \chi_p^{(q)}$ or $\chibar^\sigma/\chibar|_{D_q} = \chi_p^{(q)}$.
Note that such primes~$q$ are exactly the ones lying in~$S_1$.

\item \textbf{$q$ is not split in $L$:} In this case, $I(\chibar/\chibar^{\sigma})|_{D_q}$ is reducible if and only if $D_q$ is either $\ZZ/2\ZZ$ or $\ZZ/2\ZZ \times \ZZ/2\ZZ$.
Let $\tilde{q}$ be the unique prime of $L$ lying above~$q$.

Suppose first that $D_q = \ZZ/2\ZZ$.
Then $D_q = \Gal(L_{\tilde q}/K_q)$. Consequently, $I(\chibar/\chibar^{\sigma})_\FF|_{D_q}$ is $\FF^2$ with
$D_q$-action swapping the two standard basis vectors.
So, if $p>2$, then $I(\chibar/\chibar^{\sigma})_\FF|_{D_q} \cong C(1)_\FF \oplus C(\epsilon)_\FF$ for the quadratic character
$\epsilon: D_q \cong \{\pm 1\} \subseteq \FF^\times$.
If $p=2$, then $I(\chibar/\chibar^{\sigma})|_{D_q}$ is the module~$N_\FF$ from Definition~\ref{defi:modules}.
Hence, for any $p$, we see that $C(\chi_p^{(q)})_{\FF_p}$ is an $\FF_p[D_q]$-submodule of $I(\chibar/\chibar^{\sigma})|_{D_q}$
if and only if $\chi_p^{(q)}$ is the trivial character or equal to~$\epsilon$. This happens if and only if one of the following conditions hold:
\begin{enumerate}
\item $\mu_p \subseteq K_q$,
\item $L_{\tilde q} = K_q(\mu_p)$ (and then $q$ is inert in $L$).
\end{enumerate}
Now, the primes~$q$ satisfying any one of the conditions above are exactly the ones lying in $S_2$.

Suppose now that $D_q = \ZZ/2\ZZ \times \ZZ/2\ZZ$ (note that this case cannot happen when $p=2$).
Then $q$ is ramified in $M^{\ad}$ and is not split in~$L$.
Note that $\chibar/\chibar^{\sigma}$ is a non-trivial character of $\Gal(M^{\ad}_{q'}/L_{\tilde{q}}) = D_q \cap \Gal(M^{\ad}/L)$.
So, in this case, $C(\chi_p^{(q)})_{\FF_p}$ is an $\FF_p[\Gbar^\ad]$-submodule of $I(\chibar/\chibar^{\sigma})|_{D_q}$ if and only if
$M^\ad_{q'} = L_{\tilde{q}}(\mu_p) \overset{2}{\supsetneq} L_{\tilde{q}}\overset{2}{\supsetneq} K_q$.
This is equivalent to $[K_q(\mu_p) : K_q] =2$, $q$ is ramified in $L$ and the unique prime of $L$ lying above $q$ is unramified in $M^{\ad}$.
So, primes satisfying these conditions are exactly the ones belonging to $S_3$.
\end{enumerate}
Thus, the primes satisfying these conditions are contained in $S_1 \cup S_2 \cup S_3 = S_0$, whence by our assumption none of them lies in~$S$.
We thus obtain  $\Hom_{\FF_p[D_q]}(C(\chi_p^{(q)})_{\FF_p}, I(\chibar/\chibar^{\sigma})|_{D_q})= 0$ for all~$q \in S''$.
In view of \eqref{eq:hom1} and~\eqref{eq:frob-rec}, the assertion of the proposition follows. 
\end{proof}

\begin{rem}\label{ramrem}
In the proof of Proposition~\ref{prop:CL-Ind}, we are ignoring the contribution of the kernel of the first map of~\eqref{eq:cft1}, which is given by the global units $\calO^\times_{M^{\ad}}$ of $M^{\ad}$. So, it could happen that $S$ contains some primes from $S_0$ and the conclusion of the proposition still continues to hold due to the contribution coming from $\calO^\times_{M^{\ad}}$ negating the contribution coming from primes of $M^{\ad}$ lying above primes from~$S_0$.
However, we know the $\FF_p[\Gbar^\ad]$-module structure of the finite dimensional $\FF_p$-vector space $\calO^\times_{M^{\ad}}/(\calO^\times_{M^{\ad}})^p$. So, we can find a number $n_0$ such that if $S$ contains more than $n_0$ primes from $S_0$, then the statement of the proposition would not be true any more.
\end{rem}

\begin{rem}\label{classgrprem}
The assumption $\Hom_{\FF_p[\Gbar^\ad]}(A(M^{\ad})/pA(M^{\ad}) ,  I(\chibar/\chibar^{\sigma})) = 0$ is necessary for Pro\-position~\ref{dimprop} to hold
because if the assumption is violated, then by \eqref{eq:cft1}, $\Hom_{\FF_p[\Gbar^\ad]}(\calG, I(\chibar/\chibar^{\sigma}))$ is non-zero.
\end{rem}

\begin{rem}\label{prem}
If we include all primes $\mathfrak{p}$ of $K$ lying above $p$ in $S$ and if either $K$ is not totally real or $\rhobar$ is not totally even (equivalently, if $M^{\ad}$ is not totally real), then either the existence of complex embeddings of $K$ or the oddness of $\rhobar$ would imply that the multiplicity of $I(\chibar/\chibar^{\sigma})$ occurring in $\prod_{\mathfrak{p'} | p} \calO_{\mathfrak{p'}}/(\calO_{\mathfrak{p'}})^p$ would be greater than the one of $I(\chibar/\chibar^{\sigma})$ occurring in $\calO^\times_{M^{\ad}}/(\calO^\times_{M^{\ad}})^p$ (here, $\calO_{\mathfrak{p'}}$ is the ring of integers of the completion of $M^{\ad}$ at the prime $\mathfrak{p'}$ lying above $\mathfrak{p}$). Thus, we see that in this case $\Hom_{\FF_p[\Gbar^\ad]}(\calG, I(\chibar/\chibar^{\sigma})) \neq 0$. Therefore, it is necessary that $S$ does not contain all the primes above $p$ for Proposition~\ref{dimprop} to hold when either $K$ is not totally real or $\rhobar$ is not totally even. From this logic, we also see that, if either $K$ is not totally real or $\rhobar$ is not totally even, then, in some cases, the presence of only some (and not all) of the primes of $K$ lying above $p$ in $S$ is sufficient to conclude that Proposition~\ref{dimprop} does not hold.
\end{rem}

\begin{rem}\label{rem:tot-im}
We have assumed that $M^{\ad}$ is totally complex when $p=2$. If $M^{\ad}$ had a real place~$v$, then its $\Gbar^\ad$-orbit would consist entirely of real places. So, there would be a contribution from these places in the exact sequence~\eqref{eq:cft1}.
Moreover, as $\FF_2[\Gbar^\ad]$-module, the contribution $\prod_{g \in \Gbar^\ad}\ZZ/2\ZZ$ given by the Galois orbit of $v$ in the first term of the exact sequence would be isomorphic to $\FF_2[\Gbar^\ad]$, i.e.\ the regular representation.
Hence, there can be a non-zero $\FF_2[\Gbar^\ad]$-homomorphism from the first term in \eqref{eq:cft1} to $I(\chibar/\chibar^{\sigma})$.
Thus, the hypothesis seems essential for the proposition unless we know that contribution from global units negates the contribution coming from $S_{\infty}$. So, in particular, if $M^{\ad}$ has sufficiently many real places, then the statement of Proposition~\ref{dimprop} does not hold.
\end{rem}

We now turn towards deformation theory.
Clearly, $\rhobar$ is a representation of $G = \Gal(M^{\rhobar}(S)/K)$. 
Denote by $D_S$ the functor from $\calC$ to the category of sets which sends $R$ to the set of continuous deformations $\rho : G \to \GL_2(R)$ of $\rhobar$. Let $D^0_S$ be the subfunctor of $D_S$ which sends an object $R$ of $\calC$ to the set of continuous deformations $\rho : G \to \GL_2(R)$ of $\rhobar$ with determinant $\widehat{\det(\rhobar)}$.

\begin{lem}\label{deflem}
The functors $D_S$ and $D^0_S$ are representable by rings in~$\calC$.
\end{lem}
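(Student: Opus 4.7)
The plan is to invoke Mazur's standard representability theorem (\cite[\S1.2]{M}), which guarantees that the deformation functor of an absolutely irreducible residual representation $\rhobar$ of a profinite group is pro-representable on the category $\calC$ provided the group satisfies the finiteness condition~$\Phi_p$. Absolute irreducibility of $\rhobar$ is built into Set-up~\ref{setup:dih}, so the only real content is verifying $\Phi_p$ for $G = \Gal(M^{\rhobar}(S)/K)$.

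For this, I would first reduce to checking $\Phi_p$ for $G_{M^{\rhobar},S} = \Gal(M^{\rhobar}(S)/M^{\rhobar})$, because $\Phi_p$ is preserved under forming extensions by a finite group (any open subgroup of~$G$ contains the intersection with $G_{M^{\rhobar},S}$ as an open subgroup of finite index). Since $M^{\rhobar}$ is a number field and $S \cup S_{\rhobar}$ is a finite set of places of~$K$ (whose primes above in $M^{\rhobar}$ still form a finite set), $G_{M^{\rhobar},S}$ is the Galois group of the maximal extension of a number field unramified outside a finite set of places. That such a Galois group satisfies $\Phi_p$ is a classical consequence of global class field theory: the maximal abelian pro-$p$ quotient unramified outside a finite set sits in an exact sequence controlled by the (finite) $p$-part of the class group and by local unit groups modulo $p$ over the finitely many primes in~$S$, hence is topologically finitely generated; the same holds for every open subgroup, which corresponds to a finite extension of the base to which the argument applies verbatim. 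This yields representability of $D_S$ by some object $R_S^\univ \in \calC$.

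For $D^0_S$, I would observe that it is the subfunctor of $D_S$ cut out by the single condition $\det\rho = \widehat{\det\rhobar}$. Concretely, if $\rho^\univ : G \to \GL_2(R_S^\univ)$ is the universal deformation, then $\det\rho^\univ \cdot \widehat{\det\rhobar}^{-1} : G \to (R_S^\univ)^\times$ is a character reducing to the trivial character, so it factors through the maximal pro-$p$ abelian quotient of~$G$ and takes values in $1+\fm_{R_S^\univ}$. Let $I$ be the ideal of $R_S^\univ$ generated by the elements $\det\rho^\univ(g) - \widehat{\det\rhobar}(g)$ as $g$ ranges over (topological) generators of this quotient; then $(R_S^\univ)^0 := R_S^\univ/I$ is an object of~$\calC$ and by construction represents $D^0_S$.

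The only genuine obstacle is the verification of $\Phi_p$, and this is entirely standard for Galois groups of number fields unramified outside a finite set, so no deeper input is required beyond citing the relevant class field theory computation (as in \cite[\S1.1]{M} or \cite[Chapter VIII]{NSW}).
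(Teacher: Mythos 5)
Your proposal is correct and follows essentially the same route as the paper: invoke Mazur's representability theorem for the absolutely irreducible $\rhobar$ once the finiteness condition $\Phi_p$ is verified, and obtain $(R^\univ_S)^0$ as the fixed-determinant quotient (the paper just cites Mazur's \S 24 here; your explicit ideal $I$ is precisely the description the paper itself uses later in the proof of Corollary~\ref{maincor}). The only cosmetic difference is in checking $\Phi_p$: the paper observes that $G$ is a quotient of $\Gal(\kappa/K)$ with $\kappa = K(S\cup S_\rhobar)$, which satisfies $\Phi_p$ by the standard class field theory argument, whereas you realise $G$ as an extension of the finite group $\Gal(M^{\rhobar}/K)$ by $G_{M^{\rhobar},S}$; both reductions are standard and equally valid.
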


\begin{proof}
The group $G$ is a quotient of $\Gal(\kappa/K)$ and, hence, satisfies the finiteness condition $\Phi_p$ of Mazur (\cite[1.1]{M}).
Therefore, as seen just before Proposition~\ref{prop:univ-inft}, $D_S$ is representable by a ring in~$\calC$.
As a consequence, it follows that $D^0_S$ is also representable by a ring in~$\calC$ (see \cite[Section $24$]{Ma}). 
\end{proof}

For an object $R$ of $\calC$, a deformation $\rho : \Gal(\kappa/K) \to \GL_2(R)$ of $\rhobar$ belongs to $D_S(R)$ if and only if the field fixed by $\ker(\rho)$ is an extension of $M^{\rhobar}$ unramified outside the places of $M^{\rhobar}$ lying above~$S$. Hence, we make the following definition.

\begin{defi}
Deformations of $\rhobar$ belonging to $D_S(R)$ ($D^0_S(R)$) are said to be {\em relatively unramified outside~$S$} (with constant determinant).
\end{defi}
Note that if $\rho \in D_S(R)$ and $\ell$ is a prime of $K$ not contained in $S$, then $\rho|_{G_{K_\ell}}$ is often called a minimal deformation of $\rhobar|_{G_{K_\ell}}$ in the literature (see \cite[Section 29]{Ma}). Thus an element of $D_S(R)$ is a deformation of $\rho$ which is minimally ramified at all primes not contained in $S$.

\begin{rem}
We are careful to speak of {\em relatively} unramified deformations of $\rhobar$ instead of just unramified ones in order to avoid possible confusion with unramified representations: if $S$ does not contain all of $S_\rhobar$, then a deformation can be relatively unramified outside~$S$ even though as a representation it does ramify outside~$S$.
\end{rem}

We continue to essentially follow the notation introduced in section~\ref{sec:rep} with the exception that we keep track of the chosen set of primes~$S$.
Denote by $R^{\univ}_S$ the ring representing $D_S$. Denote by $\rho^{\univ}_S : G \to \GL_2(R^{\univ}_S)$ the universal deformation of $\rhobar$ relatively unramified outside~$S$. Let $(R^{\univ}_S)^0$ be the ring which represents $D^0_S$ and $(\rho^{\univ}_S)^0 : G \to \GL_2((R^{\univ}_S)^0)$ the universal deformation of $\rhobar$ relatively unramified outside $S$ with constant determinant. So, we have a natural surjective homomorphism $R^{\univ}_S \to (R^{\univ}_S)^0$.
Let $\fm_{R^{\univ}_S}$ be the maximal ideal of $R^{\univ}_S$.

\begin{proof}[Proof of Theorem~\ref{thm:nt-dih}.]
Write $\rho := \rho^\univ_S$ as abbreviation. We shall apply some of the main results from section~\ref{sec:rep}.
In particular, it will suffice to work with $p$-Frattini quotients and to apply the classifications of modules from section~\ref{sec:rep}.

Let $\Gamma_\rho$ be the group defined in~\eqref{eq:seq} and let $\calG' := \Gamma_\rho/\Phi(\Gamma_\rho)$ be its $p$-Frattini quotient.
Let $M^{\rhobar,\univ}$ be the subfield of $M^\rhobar(S)$ such that $\Gal(M^{\rhobar,\univ}/M^\rhobar) = \calG'$.
We have $\Gal(M^{\rhobar,\univ}/M^\ad) \cong C \times \calG'$ because $C$ is cyclic and the action of $C$ on $\calG'$ by conjugation is trivial
as it corresponds to conjugation by scalar matrices due to Lemma~\ref{lem:split-basis} and Corollary~\ref{cor:univ-inft}.

By Galois theory, there is a unique extension $M^{\ad,\univ}$ of $M^\ad$ contained in $M^{\rhobar,\univ}$ such that $\Gal(M^{\ad,\univ}/M^\ad) \cong \calG'$
as $\FF_p[\Gbar^\ad]$-modules.
The field $M^{\ad,\univ}$ is contained in $M^\ad(S)$ because if a prime of $M^\ad$ ramifies in $M^{\ad,\univ}$, then there is a prime of $M^\rhobar$ above it
that ramifies in $M^{\rhobar,\univ}$ as the orders of $C$ and $\calG'$ are coprime.
Note that $\calG'$ is a quotient of $\calG$ as $\FF_p[\Gbar^\ad]$-modules.
Since, by Proposition~\ref{dimprop}, $I(\chibar/\chibar^\sigma)$ does not occur in $\calG$ as $\FF_p[\Gbar^\ad]$-module, it does not occur in $\calG'$ either.
So, by Corollary~\ref{cor:fratt-ad}, $\Hbar^\ad$ acts trivially on~$\calG'$. This allows us to conclude that $\rho = \rho^\univ_S$ is dihedral
by Theorem~\ref{thm:inf-dih}. 
\end{proof}

\begin{rem}\label{rem:nt-dih}
If $\Hom_{\FF_p[\Gbar^\ad]}(\calG,  I(\chibar/\chibar^{\sigma})) \neq 0$, then we can find an abelian extension $M_0$ of $M^{\rhobar}$ unramified outside primes of $M^{\rhobar}$ lying above $S$ such that $M_0$ is Galois over~$K$ and such that the exact sequence $0 \to \Gal(M_0/M^{\rhobar}) \to \Gal(M_0/K) \to \Image(\rhobar) \to 0$ gives $\Gal(M_0/M^{\rhobar})$ the structure of $\FF_p[\Gbar^\ad]$-module isomorphic to $I(\chibar/\chibar^{\sigma})$.
Therefore, from Proposition~\ref{prop:infi-lift}, we get a non-dihedral infinitesimal deformation of $\rhobar$ relatively unramified outside~$S$.
So, in that case, $\rho^{\univ}_S$ is not dihedral. Thus, from Remark~\ref{classgrprem}, we see that $\rho^{\univ}_S$ is not dihedral if $\Hom_{\FF_p[\Gbar^\ad]}(A(M^{\ad})/pA(M^{\ad}),I(\chibar/\chibar^{\sigma}))$ is non-zero.
It follows, from Remark~\ref{ramrem}, that $\rho^{\univ}_S$ is not dihedral if the set $S$ contains sufficiently many primes from the set $S_0$. Finally, Remark~\ref{prem} implies that if $S$ contains all the primes above $p$ and $\rhobar$ is not totally even, then $\rho^{\univ}_S$ is not dihedral.
\end{rem}

We will now see some consequences of Theorem~\ref{thm:nt-dih}, the hypotheses of which we assume to hold in the sequel.

Let $G^{\ab,(p)}_{M^{\rhobar}(S),L}$ be the maximal continuous pro-$p$ abelian quotient of $\Gal(M^{\rhobar}(S)/L)$. As $S$ does not contain any prime of $K$ above $p$, it follows, from global class field theory, that $G^{\ab,(p)}_{M^{\rhobar}(S),L}$ is a finite, abelian $p$-group. Let $L_S$ be the extension of $L$ such that $\Gal(L_S/L)=G^{\ab,(p)}_{M^{\rhobar}(S),L}$. So, $M^{\rhobar}L_S \subset M^{\rhobar}(S)$. Note that $L_S$ contains the maximal abelian $p$-extension of $L$ unramified outside primes of $L$ lying above~$S$.
Moreover, note that $M^{\rhobar}L_S$ is an abelian extension of $L$ as both $M^{\rhobar}$ and $L_S$ are abelian extensions of $L$. As $p \nmid |\Gal(M^{\rhobar}/L)|$ and $\Gal(L_S/L)$ is a finite abelian $p$-group, it follows that $\Gal(M^{\rhobar}L_S/L) \simeq \Gal(M^{\rhobar}/L) \times \Gal(L_S/L)$. Let $\mathfrak{q}$ be a prime of $L$. If $\mathfrak{q}$ ramifies in $L_S$, then any prime of $M^{\rhobar}$ lying above $\mathfrak{q}$ ramifies in $M^{\rhobar}.L_S$. As $M^{\rhobar}L_S \subset M^{\rhobar}(S)$, it follows that $L_S$ is unramified outside primes of $L$ lying above~$S$. So, it follows that $L_S$ is the maximal abelian $p$-extension of $L$ unramified outside primes of $L$ lying above~$S$.

Suppose $\Gal(L_S/L) = \prod_{i=1}^{n'}\ZZ/p^{e_i}\ZZ$.

\begin{cor}\label{corstr}
Let $S$ be a finite set of primes of $K$. Suppose the conditions given in Theorem~\ref{thm:nt-dih} hold. Then $R^{\univ}_S \simeq W(\FF)[X_1,\cdots,X_{n'}]/((1+X_1)^{p^{e_1}}-1,\cdots,(1+X_{n'})^{p^{e_{n'}}}-1)$.
\end{cor}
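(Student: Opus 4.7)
The plan is to combine three results already in place: Theorem \ref{thm:nt-dih}, Theorem \ref{thm:inf-dih-univ}(b), and Proposition \ref{prop:univ-char}. The hypotheses of the corollary are exactly those of Theorem \ref{thm:nt-dih}, so the first step is to apply that theorem to conclude that $\rho^{\univ}_S$ is a dihedral deformation of $\rhobar$.

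Next, I would invoke Theorem \ref{thm:inf-dih-univ}(b). It applies because $G = \Gal(M^\rhobar(S)/K)$ satisfies Mazur's finiteness condition $\Phi_p$ (as already used in Lemma \ref{deflem}), and because $\rho^{\univ}_S$ is the universal deformation of $\rhobar$ viewed as a representation of $G$. The theorem yields a canonical isomorphism $R^{\univ}_S \cong R^{\univ}_\chibar$, where the right-hand side is the universal deformation ring of $\chibar$ regarded as a character of $H = \Gal(M^\rhobar(S)/L)$.

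Finally, I would apply Proposition \ref{prop:univ-char} to $\chibar$. The maximal continuous abelian pro-$p$ quotient of $H$ is, by definition, $G^{\ab,(p)}_{M^\rhobar(S),L}$, which the preamble to the corollary identifies, via global class field theory, with $\Gal(L_S/L) \cong \prod_{i=1}^{n'}\ZZ/p^{e_i}\ZZ$. Since $S$ contains no prime of $K$ above $p$, this group is finite, so the free rank parameter $r$ in Proposition \ref{prop:univ-char} is zero. Plugging the tuple $(e_1,\ldots,e_{n'})$ into the formula from that proposition gives precisely
$$R^{\univ}_S \;\cong\; W(\FF)[X_1,\ldots,X_{n'}]/\bigl((1+X_1)^{p^{e_1}}-1,\ldots,(1+X_{n'})^{p^{e_{n'}}}-1\bigr),$$
which is the claimed presentation.

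There is no real obstacle: this is genuinely a corollary in the strict sense. Theorem \ref{thm:nt-dih} performs the representation-theoretic reduction, Theorem \ref{thm:inf-dih-univ}(b) packages it as an isomorphism of deformation rings, and Proposition \ref{prop:univ-char} delivers the explicit presentation. The only bookkeeping point to verify is that the finite pro-$p$ abelian quotient of $H$ is genuinely $\Gal(L_S/L)$ with $L_S$ the maximal abelian $p$-extension of $L$ unramified outside $S$, but this is exactly what the preamble established using that $\Gal(M^\rhobar/L)$ has order coprime to $p$ and that ramification of $L_S/L$ would force ramification of $M^\rhobar L_S/M^\rhobar \subseteq M^\rhobar(S)/M^\rhobar$.
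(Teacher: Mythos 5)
Your proof is correct and follows exactly the paper's route: the paper's own proof is the one-line citation of Theorem~\ref{thm:nt-dih}, Theorem~\ref{thm:inf-dih-univ}(b) and Proposition~\ref{prop:univ-char}, with the identification of the maximal abelian pro-$p$ quotient of $H$ with $\Gal(L_S/L)$ done in the preamble just as you describe. Your version simply spells out the bookkeeping (including that $r=0$ because $S\cap S_p=\emptyset$) that the paper leaves implicit.
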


\begin{proof}
This follows directly from Theorem~\ref{thm:nt-dih}, part (b) of Theorem~\ref{thm:inf-dih-univ} and Proposition~\ref{prop:univ-char}. 
\end{proof}

Now suppose $p$ is odd. In this case, we add an explicit description of $(R^{\univ}_S)^0$.
Since $L$ is Galois over $K$, $\Gal(M^{\rhobar}(S)/L)$ is a normal subgroup of $\Gal(M^{\rhobar}(S)/K)$ and hence, $L_S$ is Galois over $K$. Now, we get an action of $\Gal(L/K)$ on $\Gal(L_S/L)$ by conjugation. As $p$ is odd, we get a direct sum decomposition $\Gal(L_S/L) = \prod_{i=1}^{n}\ZZ/p^{e_i}\ZZ \oplus  \prod_{i=n+1}^{n'}\ZZ/p^{e_i}\ZZ$ such that $\Gal(L/K)$ acts by inversion on $\prod_{i=1}^{n}\ZZ/p^{e_i}\ZZ$ and trivially on $\prod_{i=n+1}^{n'}\ZZ/p^{e_i}\ZZ$ (note that $n$ could be $0$ or $n'$). 
 
\begin{cor}\label{maincor}
Let $p$ be an odd prime and $S$ be a finite set of primes of $K$. Suppose the conditions given in Theorem~\ref{thm:nt-dih} hold.
Then $(R^{\univ}_S)^0 \simeq W(\FF)[X_1,\cdots,X_n]/((1+X_1)^{p^{e_1}}-1,\cdots,(1+X_n)^{p^{e_n}}-1)$.
\end{cor}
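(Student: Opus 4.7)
The plan is to leverage Corollary~\ref{corstr} and impose the constant-determinant condition on the universal character. By Theorem~\ref{thm:nt-dih} together with Theorem~\ref{thm:inf-dih-univ}(b), one has $\rho^\univ_S \cong \Ind_H^G(\chi^\univ)$ for a universal character $\chi^\univ : H \to (R^\univ_S)^\times$ lifting $\chibar$, where $H = \Gal(M^{\rhobar}(S)/L)$. Writing $\chi^\univ = \chibarhat \cdot \psi^\univ$, the character $\psi^\univ$ takes values in $1 + \fm_{R^\univ_S}$ and therefore factors through the maximal abelian pro-$p$ quotient $P := \Gal(L_S/L) = \prod_{i=1}^{n'} \ZZ/p^{e_i}\ZZ$ of~$H$. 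The plan thus reduces to identifying the extra relations on $\psi^\univ$ imposed by $\det(\Ind_H^G(\chi^\univ)) = \widehat{\det(\rhobar)}$.

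For a lift $\chi = \chibarhat \cdot \psi$ of $\chibar$, I would read off the determinant of $\rho := \Ind_H^G(\chi)$ from the matrix form \eqref{eq:dih-matrix}: for $h \in H$ one has $\det\rho(h) = \chi(h)\chi^\sigma(h) = \chibarhat(h)\chibarhat^\sigma(h) \cdot \psi(h)\psi^\sigma(h)$, and $\det\rho(\sigma) = -\chi(\sigma^2) = -\chibarhat(\sigma^2)\psi(\sigma^2)$. Comparing with the Teichm\"uller lift of $\det\rhobar$ and using that the Teichm\"uller section is multiplicative on $\FF^\times$, the constant-determinant condition boils down to $\psi \cdot \psi^\sigma = 1$ on~$H$. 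The condition at $\sigma$ is then automatic: specialising the relation to $h = \sigma^2 \in H$ yields $\psi(\sigma^2)^2 = 1$, and since $\psi(\sigma^2) \in 1 + \fm_{R^\univ_S}$ lies in a pro-$p$ group with $p$ odd, it must equal $1$.

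Finally, since $p$ is odd, the $\Gal(L/K)$-action on the finite abelian $p$-group $P$ by conjugation yields the eigenspace decomposition $P = P^+ \oplus P^-$, with $P^- = \prod_{i=1}^n \ZZ/p^{e_i}\ZZ$ by hypothesis. Restricting the relation $\psi^\sigma = \psi^{-1}$ to $P^+$ forces $\psi^2 = 1$ there, hence $\psi|_{P^+} = 1$; on $P^-$, where $\sigma$ acts by inversion, the relation is automatic. Thus $\psi^\sigma = \psi^{-1}$ is equivalent to $\psi$ factoring through the quotient $P \twoheadrightarrow P^-$, and applying Proposition~\ref{prop:univ-char} with $P$ replaced by $P^-$ (so $r=0$ and exponents $(e_1,\ldots,e_n)$) identifies the corresponding universal ring with $\calU_{W(\FF),0,(e_1,\ldots,e_n)} = W(\FF)[X_1,\ldots,X_n]/((1+X_1)^{p^{e_1}}-1,\ldots,(1+X_n)^{p^{e_n}}-1)$, which is the claimed form of $(R^\univ_S)^0$. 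The only step requiring care is checking that the natural surjection $R^\univ_S \twoheadrightarrow (R^\univ_S)^0$ matches, under the identification $R^\univ_S \cong R^\univ_\chibar$ of Theorem~\ref{thm:inf-dih-univ}(b), the surjection $\calU_{W(\FF),0,(e_1,\ldots,e_{n'})} \twoheadrightarrow \calU_{W(\FF),0,(e_1,\ldots,e_n)}$ obtained by killing $X_{n+1},\ldots,X_{n'}$; the determinant computation itself is elementary.
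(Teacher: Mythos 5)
Your proof is correct and follows essentially the same route as the paper: realise $\rho^\univ_S$ as $\Ind_H^G(\chi^\univ)$ via Theorems~\ref{thm:nt-dih} and~\ref{thm:inf-dih-univ}(b), reduce the constant-determinant condition to $\psi\cdot\psi^\sigma=1$ on~$H$, and use the $\pm$-eigenspace decomposition of $\Gal(L_S/L)$ under $\Gal(L/K)$ together with $p>2$ to kill the plus part. The final step you flag as ``requiring care'' is handled in the paper by computing directly the ideal $I=\big(\det(\rho^{\univ}_S(g))-\widehat{\det(\rhobar(g))}\,:\,g\in G\big)$ that cuts $(R^{\univ}_S)^0$ out of $R^{\univ}_S\cong \calU_{W(\FF),0,(e_1,\dots,e_{n'})}$: it is generated by the elements $X_i(X_i+2)$ for $n+1\le i\le n'$, and since $X_i+2$ is a unit when $p>2$, one gets $I=(X_{n+1},\dots,X_{n'})$, which yields the claimed quotient without any separate matching argument.
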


\begin{proof}
It follows, from Theorem~\ref{thm:nt-dih} and part (b) of Theorem~\ref{thm:inf-dih-univ} that $\rho^{\univ}_S = \Ind_{G_{M^{\rhobar}(S)/L}}^{G_{M^{\rhobar}(S)/K}} \chi^{\univ}$ where $\chi^{\univ} : \Gal(M^{\rhobar}(S)/L) \to (R^{\univ}_S)^\times$ is the universal deformation of $\chibar$. As $p$ is odd, it follows, from part (c) of Lemma~\ref{lem:split-basis}, that the exact sequence $0 \to \Gamma^{\univ}_S \to \Image(\rho^{\univ}_S) \to \Image(\rhobar) \to 0$ splits. Note that $\chi^{\univ}$ factors through $\Gal(M^{\rhobar}L_S/L)$. So, from the description of $\chi^{\univ}$ obtained in Proposition~\ref{prop:univ-char}, we get that $\Image(\det(\rho^{\univ}_S))$ is the subgroup of $(R^{\univ}_S)^\times$ generated by $\widehat{\Image(\det{\rhobar})}$ and $(1+X_i)^2$ with $n+1 \leq i \leq n'$. 

We know that $(R^{\univ}_S)^0$ is the quotient $R^{\univ}_S/I$, where $I$ is the ideal generated by the elements $\det(\rho^{\univ}_S(g)) -\widehat{\det(\rhobar(g))}$ for all $g \in \Gal(M^{\rhobar}(S)/K)$ (see \cite[Section $24$]{Ma}). So, $I$ is generated by the elements $X_i(X_i+2)$ with $n+1 \leq i \leq n'$. As $p>2$ and $X_i \in \fm_{R^{\univ}_S}$ for all $1 \leq i \leq n'$, it follows that $I=(X_{n+1},\cdots,X_{n'})$. Therefore, $(R^{\univ}_S)^0 \simeq R^{\univ}_S/(X_{n+1},\cdots,X_{n'}) \simeq W(\FF)[X_1,\cdots,X_n]/((1+X_1)^{p^{e_1}}-1,\cdots,(1+X_n)^{p^{e_n}}-1)$. 
\end{proof}

\subsection{Application to the Boston--Fontaine--Mazur Conjecture}

\begin{proof}[Proof of Corollary~\ref{cor:FM}.]
If the conditions given in Theorem~\ref{thm:nt-dih} hold, then Theorem~\ref{thm:nt-dih} and part (b) of Theorem~\ref{thm:inf-dih-univ} together imply that $\rho^{\univ}_S = \Ind_{G_{M^{\rhobar}(S)/L}}^{G_{M^{\rhobar}(S)/K}} (\chi^{\univ})$ where $\chi^{\univ} : \Gal(M^{\rhobar}(S)/L) \to (R^{\univ}_S)^\times$ is the universal deformation of $\chibar$. It follows, from the description of $\chi^{\univ}$ in Proposition~\ref{prop:univ-char} and the discussion before Corollary~\ref{corstr}, that $\chi^{\univ}$ has finite image. Therefore, from the discussion before Definition~\ref{defi:modules}, we see that $\rho^{\univ}_S$ has finite image. 
\end{proof}

\begin{proof}[Proof of Corollary~\ref{cor:exm}]
In order to prove this corollary, we will check that the set $S$ satisfies the hypotheses of Theorem~\ref{thm:nt-dih} in both the cases.
Recall that we denoted by $M$ the maximal unramified abelian $5$-extension of~$L$, which exists as the class number of $L$ is~$15$.
Now, the class number of $M$ is $3$ (see \cite[\href{http://www.lmfdb.org/NumberField/20.0.35908028125401873392383429449.1}{Number field 20.0.35908028125401873392383429449.1}]{lmfdb}). 

Let us assume that we are in case~(a) of the corollary. So, we have $K=\QQ(\sqrt{717})$, $L=\QQ(\sqrt{-3},\sqrt{-239})$, $M^{\ad}=M$ and $\Gbar^{\ad} = \Gal(M/\QQ(\sqrt{717})) \simeq D_5$.
As the class number of $M$ is $3$, it follows that $\Hom_{\FF_3[\Gbar^{\ad}]}(A(M)/3A(M), I(\chi/\chi^{\sigma}))=0$.
Now, $S_{\infty} \subseteq S$ and $S \cap S_p = \emptyset$. Let $P$ be the set of all finite primes of $\QQ(\sqrt{717})$.
Now, as $\mu_3 \subset \QQ(\sqrt{-3},\sqrt{-239})$, we see that $S_1=\{\ell \in P \;|\; \ell \text{ is totally split in }M\}$ and $S_2 = \{\ell \in P \;|\; \ell \text{ is inert in }L\}$.
Since we are working with $D_5$, $S_3 = \emptyset$. Now, if $\ell$ is a finite prime contained in $S$, then $\ell$ is split in $L$, which means that $\ell \not\in S_2$.
But $\ell$ is not completely split in $M$, which means that $\ell \not\in S_1$.
So, $S \cap S_0 = \emptyset$.
Hence, all the hypotheses of Theorem~\ref{thm:nt-dih} are satisfied.
Therefore, by Corollary~\ref{cor:FM}, we get that the universal deformation of $\rhobar_1$ relatively unramified outside $S$ has finite image.

The proof in the other case follows in the exact same way. In that case $\mu_3 \subset \QQ(\sqrt{-3}) =K$ and we have already given the description of $S_0$ in the case $\mu_p \subset K$. So, we can use that description to prove that $S \cap S_0 =\emptyset$. 
\end{proof}

\begin{rem}
In the introduction, we said that the examples of Corollary~\ref{cor:exm} do not satisfy the hypotheses of \cite[Corollary 3]{AC}. Here, we would like to elaborate a bit more on that.
Allen and Calegari prove, in \cite[Corollary 3]{AC}, that if $F$ is a totally real field and $\rho : G_F \to \GL_2(\FF)$ is a totally odd representation satisfying certain hypotheses, then Boston's conjecture is true for $\rho$.
As the base field is assumed to be totally real in \cite[Corollary 3]{AC}, Corollary~\ref{cor:exm}\eqref{cor:exm:b} clearly does not satisfy those hypotheses.
Moreover, one of the hypotheses of \cite[Corollary 3]{AC} is that the image of $\rho|_{G_{F(\mu_p)}}$ is adequate.
However, we see that the image of $\rhobar_1|_{G_{\QQ(\sqrt{717},\mu_3)}}$ is just $\rhobar_1(G_{\QQ(\sqrt{-3},\sqrt{-239})})$ which is an abelian group. Hence, it is not adequate, which means that Corollary~\ref{cor:exm}\eqref{cor:exm:a} does not satisfy the hypotheses of \cite[Corollary 3]{AC} either.
\end{rem}

\subsection{Application to a question of Greenberg and Coleman}

The main purpose of this section is to prove a result similar to \cite[Corollary $1.3.2$]{CWE} using the main results of this article.

\begin{setup}\label{setup:app}
Let $p >2$ be a prime and $\FF$ be a finite field of characteristic~$p$.
We keep Notation~\ref{not:nt} and Set-up~\ref{setup:dih} throughout this section with $K=\QQ$. Let $\rhobar : G_{\QQ} \to \GL_2(\FF)$ be an absolutely irreducible, odd dihedral representation. So, $\rhobar = \Ind_{G_L}^{G_{\QQ}} (\chibar)$ for some quadratic extension $L$ of $\QQ$ and character $\chibar : G_L \to \FF^{\times}$. Let $N$ be the tame Artin conductor of $\rhobar$. Moreover, assume that
\begin{enumerate}
\item\label{app:im} $L$ is a quadratic imaginary number field,
\item\label{app:split} $p$ is split in $L$,
\item\label{app:distinct} $\rhobar|_{G_{\QQ_p}}$ is a sum of two \emph{distinct} characters,
\item\label{app:ram} If $\ell \neq p$ is a prime such that $\rhobar$ is ramified at $\ell$ and $\rhobar|_{G_{\QQ_{\ell}}} = \eta \oplus \psi$, then $\eta\psi^{-1} \neq \omega^{(\ell)}_p, (\omega^{(\ell)}_p)^{-1}$, where $\omega^{(\ell)}_p$ is the mod $p$ cyclotomic character of $G_{\QQ_{\ell}}$,
\item\label{app:cond} If $\ell$ is a prime such that $p | \ell-1$, $\rhobar$ is ramified at $\ell$, $\ell$ is not split in $L$ and $\rhobar|_{G_{\QQ_{\ell}}}$ is reducible, then $\rhobar|_{G_{\QQ_{\ell}}}$ is a sum of a ramified and an unramified character.
\item\label{app:irred} If $\ell$ is a prime such that $p | \ell+1$ and $\ell$ is ramified in $L$, then $\ad(\rhobar)(G_{\QQ_\ell})  \not\simeq \ZZ/2\ZZ \times \ZZ/2\ZZ$.
\end{enumerate}
\end{setup}

Let $S_0$ be the set of primes of $\QQ$ defined just before Proposition~\ref{dimprop}. Let $S$ be a finite set of primes of $\QQ$ such that $S_{\rhobar} \cup \{p\} \subset S$. Let $T := S \setminus (S \cap S_0)$. Let $G^{p-\ab}_S$ be the maximal continuous quotient of $\Gal(M^{\rhobar}(T)/\QQ)$ in which the image of $G_{\QQ_p}$ is abelian. Note that $G^{p-\ab}_S$ satisfies the finiteness condition $\Phi_p$ of Mazur and $\rhobar$ factors through $G^{p-\ab}_S$ due to assumption~\ref{app:split} above. After considering $\rhobar$ as a representation of $G^{p-\ab}_S$, let $R^{\univ,S}$ be the universal deformation ring of~$\rhobar$ with universal deformation~$\rho^{\univ,p-\ab}_S$.

\begin{prop}\label{app:dih}
Let $S$ be a finite set of primes of $\QQ$ such that $S_{\rhobar} \cup \{p\} \subset S$.\\ 
If ~$\Hom_{\FF_p[\Gbar^\ad]}(A(M^{\ad})/pA(M^{\ad}) ,  I(\chibar/\chibar^{\sigma})) = 0$, then $\rho^{\univ,p-\ab}_S$ is dihedral.
\end{prop}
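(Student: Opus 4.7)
The plan is to emulate the proof of Theorem~\ref{thm:nt-dih}, modifying the class-field-theoretic input in two ways: replacing $S$ by $T$ eliminates any offending primes from $S_0$, while the abelian-at-$p$ condition built into $G_S$ must be used to control the contribution from the primes above~$p$ that remain in~$T$.

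First I would set $\Gamma^{\univ,S} := \Gamma_{\rho^{\univ,S}}$ and form its $p$-Frattini quotient $\tilde\calG' := \Gamma^{\univ,S}/\Phi(\Gamma^{\univ,S})$; the analogues of Lemma~\ref{lem:split-basis} and Corollary~\ref{cor:univ-inft} carry over verbatim to $G_S$, so that $\tilde\calG'$ is naturally an $\FF_p[\Gbar^\ad]$-module and coincides with $\Gamma_{\rho^{\univ,S}_\inft}$. Combining Theorem~\ref{thm:inf-dih} (applied to $\rho^{\univ,S}$) with Corollary~\ref{cor:fratt-ad}, it suffices to establish
$$ \Hom_{\FF_p[\Gbar^\ad]}(\tilde\calG',\, I(\chibar/\chibar^\sigma)) = 0. $$

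Next, following the strategy in the proof of Theorem~\ref{thm:nt-dih}, I would construct $M^{\ad,\univ,S}$ as the extension of~$M^\ad$ with Galois group $\tilde\calG'$ sitting inside $M^\rhobar(T)$. Because $\rho^{\univ,S}$ factors through $G_S$, the field $M^{\ad,\univ,S}$ is contained in $M^\ad(T)$, but additionally satisfies a local condition at every prime $\mathfrak{p}'$ of $M^\ad$ above~$p$: using the decomposition $\ad(\rhobar) \cong N \oplus I(\chibar/\chibar^\sigma)$ of Lemma~\ref{lem:dihedral}, the abelian-at-$p$ condition on the deformation forces the decomposition group in $\Gal(M^{\ad,\univ,S}/M^\ad)$ at $\mathfrak{p}'$ to land in the $N$-part. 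This uses crucially that $p$ is split in $L$ by hypothesis~(\ref{app:split}), so that $\rhobar|_{G_{\QQ_p}}$ is diagonal with two distinct characters (hypothesis~(\ref{app:distinct})) and the centraliser of $\rhobar|_{G_{\QQ_p}}$ inside $\ad(\rhobar)|_{G_{\QQ_p}}$ is exactly $N|_{G_{\QQ_p}}$. Consequently $\tilde\calG'$ is an $\FF_p[\Gbar^\ad]$-equivariant quotient of a group $\tilde\calG$ that fits, via global class field theory exactly as in Proposition~\ref{prop:CL-Ind}, into an exact sequence
$$ \tilde\calM \to \tilde\calG \to A(M^\ad)/p A(M^\ad) \to 0, $$
where $\tilde\calM$ is a quotient of $\prod_{q \in T''} \Ind_{D_q}^{\Gbar^\ad}(C(\chi_p^{(q)})_{\FF_p})$, but with the contribution of each prime of~$T$ above~$p$ replaced by its $N$-summand.

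The vanishing of $\Hom_{\FF_p[\Gbar^\ad]}(\tilde\calG, I(\chibar/\chibar^\sigma))$ then reduces to three facts: the $A(M^\ad)/p$-contribution vanishes by the hypothesis; the contribution of primes in $T$ not above~$p$ vanishes by Frobenius reciprocity exactly as in Proposition~\ref{dimprop}, since $T \cap S_0 = \emptyset$ by construction; and the contribution of primes in~$T$ above~$p$ vanishes because after restriction to their $N$-summand they contain no copy of $I(\chibar/\chibar^\sigma)$, as $N$ and $I(\chibar/\chibar^\sigma)$ have disjoint composition factors in $\ad(\rhobar)$ by Lemma~\ref{lem:indec}. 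This gives the desired dihedrality of $\rho^{\univ,S}$.

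The principal obstacle is making the translation in the second paragraph fully rigorous: showing that the abelian-at-$p$ condition on deformations really does correspond, via global class field theory, to cutting down the local tangent space at a prime above~$p$ to its $N$-summand. This amounts to a local-at-$p$ calculation identifying the tangent space of abelian deformations of the diagonal representation $\rhobar|_{G_{\QQ_p}}$ inside the decomposition $\h^1(G_{\QQ_p}, \ad(\rhobar)|_{G_{\QQ_p}}) \cong \h^1(G_{\QQ_p}, N|_{G_{\QQ_p}}) \oplus \h^1(G_{\QQ_p}, I(\chibar/\chibar^\sigma)|_{G_{\QQ_p}})$, and then verifying that this local-global compatibility is preserved under the Galois-theoretic identifications underlying Proposition~\ref{prop:CL-Ind}.
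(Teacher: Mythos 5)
Your overall strategy matches the paper's: reduce dihedrality of $\rho^{\univ,S}$ to a vanishing statement for $\Hom_{\FF_p[\Gbar^\ad]}(-, I(\chibar/\chibar^{\sigma}))$ on a class-field-theoretic Galois group, use $T = S\setminus(S\cap S_0)$ to excise the primes of $S_0$, and invoke the abelian-at-$p$ condition built into $G_S$ to deal with the prime~$p$. But the step you yourself flag as the ``principal obstacle'' is exactly where your argument is incomplete, and it is also where the paper does something simpler and different. The paper does not modify Proposition~\ref{prop:CL-Ind} to include~$p$ with a trimmed local term. Instead it observes that if $M^{\rho}/M^{\rhobar}$ is an extension whose Galois group is $I(\chibar/\chibar^{\sigma})$ as $\FF_p[\Gbar]$-module and the image of $G_{\QQ_p}$ in $\Gal(M^{\rho}/\QQ)$ is abelian, then hypotheses~(2) and~(3) of Set-up~\ref{setup:app} ($p$ split in $L$, $\rhobar|_{G_{\QQ_p}}$ a sum of two \emph{distinct} characters) force every prime of $M^{\rhobar}$ above $p$ to split completely in $M^{\rho}$: the image of $G_{\QQ_p}$ contains a non-scalar diagonal element acting on $I(\chibar/\chibar^{\sigma})$ without non-zero fixed vectors, so abelianness kills the decomposition group. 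Hence $M^{\rho}\subset M^{\rhobar}(T')$ with $T'=T\setminus\{p\}$, and Proposition~\ref{dimprop} applies verbatim to $T'$ (no primes above $p$, no primes of $S_0$), giving the contradiction.

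Your proposed alternative --- an exact sequence as in Proposition~\ref{prop:CL-Ind} but with the contribution of primes above $p$ ``replaced by its $N$-summand'' --- is not established and, as stated, conflates two different decompositions. The local term at a prime $\fp'\mid p$ in the class-field-theory sequence is $\calO^\times_{\fp'}/(\calO^\times_{\fp'})^p$, which, unlike at primes $q\nmid p$, contains the image of the pro-$p$ group of $1$-units and is therefore a large $\FF_p[D_{\fp'}]$-module, not a single character $C(\chi_p^{(p)})$ and certainly not a summand of $\ad(\rhobar)|_{D_{\fp'}}$. This is precisely why Propositions~\ref{prop:CL-Ind} and~\ref{dimprop} assume $S\cap S_p=\emptyset$. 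To make your route work you would have to prove that, after imposing the abelian-at-$p$ condition, the image of this large local term in the $I(\chibar/\chibar^{\sigma})$-isotypic part of $\tilde\calG'$ vanishes --- which is essentially the complete-splitting statement above, so you should prove that statement directly and then bypass $p$ entirely rather than rebuild the class-field-theory sequence around it. With that one lemma supplied, the rest of your outline closes along the paper's lines.
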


\begin{proof}
By Theorem~\ref{thm:inf-dih-univ}, it suffices to prove that any deformation $\rho : G^{p-\ab}_S \to \GL_2(\FF[X]/(X^2))$ of $\rhobar$ is dihedral.
Now the existence of a non-dihedral deformation $\rho : G^{p-\ab}_S \to \GL_2(\FF[X]/(X^2))$ of $\rhobar$ implies that $H^1(G^{p-\ab}_S,I(\chibar/\chibar^{\sigma})) \neq 0$.
So we will prove that all infinitesimal $G^{p-\ab}_S$-deformations of $\rhobar$ are dihedral by proving that $H^1(G^{p-\ab}_S,I(\chibar/\chibar^{\sigma}))=0$. 

Suppose  $H^1(G^{p-\ab}_S,I(\chibar/\chibar^{\sigma})) \neq 0$ and let $\phi$ be a non-zero element of  $H^1(G^{p-\ab}_S,I(\chibar/\chibar^{\sigma}))$.
Let $\rho : G^{p-\ab}_S \to \GL_2(\FF[X]/(X^2))$ be a deformation of $\rhobar$ corresponding to $\phi$ and let $M^{\rho}$ be the extension of $\QQ$ fixed by $\ker(\rho)$.

Note that $M^{\rho}$ is an abelian $p$-extension of $M^{\rhobar}$ unramified outside primes of $M^{\rhobar}$ lying above $T$ and $\Gal(M^{\rho}/M^{\rhobar}) \simeq I(\chibar/\chibar^{\sigma})$ as $\FF_p[\Gbar]$-modules.
As the image of $G_{\QQ_p}$ in $\Gal(M^{\rho}/\QQ)$ is abelian, it follows, from the assumptions \ref{app:split} and \ref{app:distinct}, that any prime of $M^{\rhobar}$ lying above $p$ splits completely in $M^{\rho}$.
Hence, $M^{\rho} \subset M^{\rhobar}(T \setminus \{p\})$. Denote $T \setminus \{p\}$ by $T'$.

From the proof of Theorem~\ref{thm:nt-dih}, it follows that there exists an abelian $p$-extension $M^{\ad,\rho}$ of $M^\ad$ such that $M^{\rho}=M^{\rhobar}M^{\ad,\rho}$, $M^{\ad,\rho} \subset M^{\ad}(T')$ and $\Gal(M^{\ad,\rho}/M^{\ad}) \simeq I(\chibar/\chibar^{\sigma})$ as $\FF_p[\Gbar^{\ad}]$-modules.
This implies that $\Hom_{\FF_p[\Gbar^{\ad}]}(G^{\ab}_{M^{\ad},T'}/(G^{\ab}_{M^{\ad},T'})^p, I(\chibar/\chibar^{\sigma})) \neq 0$.
However, as $p \not\in T'$, $T' \cap S_0 =\emptyset$ and $\Hom_{\FF_p[\Gbar^\ad]}(A(M^{\ad})/pA(M^{\ad}) ,  I(\chibar/\chibar^{\sigma})) = 0$, we get a contradiction from Proposition~\ref{dimprop}. Hence, it follows that $H^1(G^{p-\ab}_S,I(\chibar/\chibar^{\sigma}))=0$ which implies the proposition. 
\end{proof}

\begin{rem}
Note that $\rho^{\univ,p-\ab}_S$ is the universal nearly ordinary $p$-locally split representation (in the sense of \cite{GV11}) relatively unramified outside $S \setminus (S \cap S_0)$.
\end{rem}

If $f$ is a classical modular eigenform of level $\Gamma_1(M)$, denote the $p$-adic Galois representation attached to it by $\rho_f$ and the corresponding semi-simple residual representation by $\rhobar_f$. We now prove Theorem~\ref{thm:CM2}.

\begin{proof}[Proof of Theorem~\ref{thm:CM2}.]
Let $S'$ be the set of prime divisors of $M$. Since $\rho_f$ is a lift of $\rhobar$, by Proposition~\ref{app:dih} with $S=S' \cup \{p\}$, it suffices to prove that $\rho_f$ factors through $G^{p-\ab}_S$.
So we need to prove that there is no ramification above $M^\rhobar$ above primes in $S' \cap S_0$ and that the image of $G_{\QQ_p}$ is abelian.
The latter is satisfied because $\rho_f|_{G_{\QQ_p}}$ is a sum of two characters.
Hence, we only need to prove that if $\ell \in S' \cap S_0$, then $\rho_f(I_{\ell}) \simeq \rhobar(I_{\ell})$.

Suppose $\ell \in S'$ is split in $L$. So we have $\rhobar|_{G_{\QQ_{\ell}}} = \chibar|_{G_{\QQ_\ell}} \oplus \chibar^{\sigma}|_{G_{\QQ_\ell}}$. By the third hypothesis of the theorem, we get that $\chibar/\chibar^{\sigma}|_{G_{\QQ_\ell}} \neq \omega_p^{(\ell)}, (\omega_p^{(\ell)})^{-1}$. Therefore, we get that $\ell \not\in S_1$ and hence, $S' \cap S_1 =\emptyset$.

Note that for $\ell \in S_2$, we have $p | \ell-1$ because the third hypothesis of the theorem excludes $L_\lambda = \QQ_\ell(\mu_p)$.
By Assumption \ref{app:ram} and \ref{app:cond}, along with the fourth hypothesis of the theorem, we get
$$S' \cap S_2 = \{\ell | M \text{ such that } p | \ell-1, \ell \text{ is ramified in }L \text{ and } |\ad(\rhobar)(D_{\ell})|=2\}.$$
Indeed the definition of $S_2$, along with Assumption \ref{app:ram}, implies that if $\ell \in S' \cap S_2$, then $\ell | M$, $p | \ell-1$, $\ell$ is not split in $L$ and $|\ad(\rhobar)(D_{\ell})|=2$. Now the assumption $\ell \nmid M/N$ and \ref{app:cond} implies that $\ell$ is ramified in $L$.

Let now $\ell \in S' \cap S_2$.
Then $\rhobar|_{G_{\QQ_{\ell}}} = \eta \oplus \epsilon\eta$ where $\eta$ is an unramified character and $\epsilon$ is the character corresponding to a ramified quadratic extension of $\QQ_{\ell}$. The unramifiedness of $\eta$ follows from Assumption~\ref{app:cond}.
So, we get that $\ell |N$ but $\ell^2 \nmid N$. Hence, from the fourth hypothesis of the theorem, we get that $\ell^2 \nmid M$. Hence, $\rho_f|_{G_{\QQ_\ell}}$ is not irreducible. 
As $\epsilon \neq 1$, we see, by the local Langlands correspondence, that $\rho_f|_{G_{\QQ_{\ell}}}$ is a sum of two characters $\chi_1$ and $\chi_2$ lifting $\eta$ and $\epsilon\eta$, respectively.
As $M$ is the tame Artin conductor of $\rho_f$, it follows that $\chi_1$ is unramified at~$\ell$.
As $\det(\rho_f|_{I_{\ell}}) = \widehat{\det(\rhobar|_{I_{\ell}})}$, it follows that $\rho_f|_{I_{\ell}} \simeq \rhobar|_{I_{\ell}}$.

Finally, suppose $S' \cap S_3 \neq \emptyset$ and let $\ell \in S' \cap S_3$.
Then $p | \ell+1$, $\ell$ is ramified in $L$ and $\ad(\rhobar)(G_{\QQ_\ell}) \simeq \ZZ/2\ZZ \times \ZZ/2\ZZ$.
But this contradicts the hypothesis \eqref{app:irred}. Hence, we get that $S' \cap S_3 = \emptyset$. 
\end{proof}

\begin{rem}\label{rem:CM}
Note that the conditions given in Set-up~\eqref{setup:app} on $\rhobar$ and the conditions in Theorem~\ref{thm:CM2} are slightly weaker than the conditions appearing in \cite[Corollary $1.3.2$]{CWE}. Indeed, in \cite[Corollary $1.3.2$]{CWE}, they assume that the tame level of the modular form is the same as the tame Artin conductor of $\rhobar$. The assumption $\Hom_{\FF_p[\Gbar^\ad]}(A(M^{\ad})/pA(M^{\ad}) ,  I(\chibar/\chibar^{\sigma})) = 0$ appearing in Theorem~\ref{thm:CM2} is the same as the assumption $X(\psi^-)=0$ appearing in \cite[Corollary $1.3.2$]{CWE}. The asumptions \eqref{app:im}, \eqref{app:split} and \eqref{app:distinct} of Set-up~\eqref{setup:app} are present in \cite[Corollary $1.3.2$]{CWE}.
Observe that the assumptions \eqref{app:ram}, \eqref{app:cond} and \eqref{app:irred} of Set-up~\eqref{setup:app} are satisfied when $\chibar$ is a character of $G_L$ such that the conductors of $\chibar$ and $\chibar^{\sigma}$ are coprime. Indeed, if the conductors of $\chibar$ and $\chibar^{\sigma}$ are co-prime, then we get that if $\ell | N$, then $\rhobar|_{G_{\QQ_\ell}} \simeq \phi \oplus \eta$, where exactly one of $\phi$ and $\eta$ is a ramified character (see \cite[Lemma $2.3.4$]{CWE}). This assumption on $\chibar$ is also present in \cite{CWE} (see \cite[Section $1.2.3$]{CWE}). Hence, Theorem~\ref{thm:CM2}  implies \cite[Corollary $1.3.2$]{CWE}.
\end{rem}

\begin{rem}
The arguments of \cite{CWE} rely crucially on an '$R=\TT$' theorem that they prove (\cite[Theorem $5.5.1$]{CWE}). However, our approach relies mainly on the group theoretic arguments and on proving that universal any nearly ordinary $p$-locally split deformation of $\rhobar$ is dihedral. However, if the universal nearly ordinary $p$-locally split deformation of $\rhobar$ is not dihedral, we cannot say anything conclusive regarding the question of Greenberg. So our argument does not yield anything when $\Hom_{\FF_p[\Gbar^\ad]}(A(M^{\ad})/pA(M^{\ad}) ,  I(\chibar/\chibar^{\sigma})) \neq 0$. Hence, we cannot prove \cite[Theorem $1.3.4$, Theorem $1.4.4$]{CWE} using our methods.
\end{rem}

\begin{rem}
Suppose that in Set-up~\ref{setup:app}, we assume that $L$ is a real quadratic field, but that all the other assumptions hold. Then the above arguments shows that if $f$ is an eigenform satisfying the hypotheses of Theorem~\ref{thm:CM2}, then $\rho_f$ is dihedral and induced from a character of~$G_L$. But as $L$ is a real quadratic field, this would immediately imply that $f$ has weight $1$ as there is no Real Multiplication in weights greater than~$1$.

This proves that if $f$ has weight greater than 1 and satisfies the hypotheses 1, 3 and 4 of Theorem~\ref{thm:CM2}, then $\rho_f|_{G_{\QQ_p}}$ is not a sum of two characters and $f$ does not have complex multiplication. This answers the question of Coleman-Greenberg in some more cases.
\end{rem}

\section{Modularity and an $R=\TT$-theorem}

In this section, we let $K$ be a totally real field and $L/K$ a quadratic extension, which is not necessarily totally imaginary.
We furthermore fix an absolutely irreducible dihedral representation $\rhobar: G_K \to \GL_2(\FF)$ which we assume to be totally odd.
As before, let $R$  be a complete Noetherian local ring with residue field $\FF$ and let $\rho: G_K \to \GL_2(R)$ be a dihedral deformation of~$\rhobar$ of the form $\Ind_{G_L}^{G_K}(\chi)$.
We assume $\rho$ to be unramified at the primes above~$p$ and of finite image. We write $H = G_L$ and $G=G_K$.

The first aim of this section is to prove that $\rho$ is modular of parallel weight one. Below a comparison theorem with a universal deformation ring
is deduced under certain conditions.
The modularity is based on the following well-known lemma.

\begin{lem}\label{modlem}
Let $\chi : G_L \to (\overline{\QQ_p})^\times$ be a finite order character of $G_L$ such that it is unramified at places dividing~$p$ and such that the induced representation $\rho=\Ind_{G_L}^{G_K}(\chi)$ is a totally odd, absolutely irreducible representation of $G_K$. Let $D$ be the tame Artin conductor of~$\rho$. Then there exists a Hilbert modular eigenform $f$ over $K$ of parallel weight one of level $\Gamma_1(D)$ such that the Galois representation $\rho_f$ attached to $f$ is isomorphic to~$\rho$.
\end{lem}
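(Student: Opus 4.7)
The plan is to realise the Hilbert modular eigenform $f$ as the theta series (automorphic induction) from $L$ to $K$ of a finite order Hecke character attached to $\chi$ by class field theory. First, fix an embedding $\overline{\QQ_p} \hookrightarrow \CC$ and translate the finite-order Galois character $\chi$ into a finite-order idele class character $\psi : \mathbb{A}_L^\times / L^\times \to \CC^\times$; the hypothesis that $\chi$ is unramified at the places above $p$ is used only to guarantee that $\psi$ has trivial algebraic infinity type, no special treatment of the $p$-adic places being required once one passes to the automorphic side in characteristic zero. A preliminary but essential step is to use the total oddness of $\rho$ to pin down the signs of $\psi$ at the archimedean places: at a real place $v$ of $K$ that splits into two real places $w,w'$ of $L$, the local signs $\psi_w(-1)$ and $\psi_{w'}(-1)$ must differ, so that $\rho(c_v) = \mathrm{diag}(\psi_w(-1),\psi_{w'}(-1))$ has trace zero and determinant $-1$; at real places of $K$ non-split in $L$ the corresponding complex conjugation lies outside $G_L$ and is represented by an off-diagonal matrix of determinant $-1$, so oddness is automatic. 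These archimedean sign data are exactly what is needed for the local components of the induced automorphic representation to lie in parallel weight one.

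Next I would form $\Pi := \mathrm{AI}_{L/K}(\psi)$ as an automorphic representation of $\mathrm{GL}_2(\mathbb{A}_K)$. For $K = \QQ$ this is Hecke's classical theta-series construction, and the generalisation to totally real base fields is due to Shimura, Yoshida and Rogawski--Tunnell. The absolute irreducibility of $\rho$ amounts to $\chi \neq \chi^\sigma$, which is precisely the non-invariance condition ensuring that $\Pi$ is cuspidal. Combined with the sign analysis of the previous step, $\Pi$ then corresponds to a Hilbert modular cuspidal eigenform $f$ over $K$ of parallel weight one. The local Langlands correspondence at each finite place $v$ identifies $\Pi_v$ with the representation of $\mathrm{GL}_2(K_v)$ whose Weil--Deligne parameter is $\bigoplus_{w\mid v} \chi_w$, so that the conductor of $\Pi_v$ matches the Artin conductor of $\mathrm{Ind}_{W_{K_v}}^{W_{L_w}}(\chi_w)$; summing over $v$ and applying the conductor-discriminant formula yields that the level of $f$ is the Artin conductor $D$ of $\rho$.

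Finally I would compare Galois representations. The $p$-adic Galois representation $\rho_f$ attached to $f$ can, in this dihedrally induced case, be constructed directly from the theta-series origin by matching Hecke eigenvalues at unramified places $v$ with traces of Frobenius on $\mathrm{Ind}_{G_L}^{G_K}(\chi)$, bypassing the deeper Deligne--Serre / Rogawski--Tunnell construction of Galois representations attached to general parallel weight one Hilbert modular forms. Chebotarev density then yields $\rho_f \cong \rho$. The main obstacle I anticipate is the archimedean bookkeeping of the first step, namely translating the oddness of $\rho$ into the correct infinity type of $\psi$ through the Weil-group parameters of automorphic induction, and then verifying that the resulting $\Pi_v$ is the odd principal series of parallel weight one rather than a limit of discrete series of a higher weight. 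Everything else, including cuspidality and the conductor computation, is formal once the theta series machinery is in place.
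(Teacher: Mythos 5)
Your proposal is correct and follows essentially the same route as the paper, which simply cites the automorphic induction results of Ohta, Rogawski--Tunnell and Gelbart (theta series/automorphic induction of the Hecke character attached to $\chi$ by class field theory, cuspidality from $\chi\neq\chi^\sigma$, parallel weight one from total oddness, and the level via local-global compatibility). The only quibble is your parenthetical claim that unramifiedness at $p$ guarantees trivial algebraic infinity type --- the finite order of $\chi$ already does that; the $p$-unramifiedness merely ensures $D$ is coprime to $p$, which is what the later Hecke-algebra arguments need.
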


\begin{proof}
The existence of the Hilbert modular eigenform of parallel weight one over $K$ follows from the proofs of \cite[Lemma $4.9$ and Lemma $4.10$]{O}, which uses automorphic induction, and \cite[Theorem $1.4$]{RaTa} (see \cite[Section $5.3(A)$ and Theorem $5.3.1$]{G} or \cite[Theorem $17$]{R} as well). The assertion for the level of $f$ follows from \cite[Theorem $1.4$]{RaTa} and the local-global compatibility in the Langlands correspondence. When $L/K$ is a CM field, the entire lemma also follows from \cite{H} (see \cite[Section $1$]{BGV}). 
\end{proof}

Let $S_1(\Gamma_1(\fn),\Qbar_p)$ denote the space of parallel weight one cuspidal Hilbert modular forms over~$K$ of level $\Gamma_1(\fn)$ with coefficients in $\Qbar_p$ (for
example, via a fixed isomorphism $\CC \cong \Qbar_p$).
Further, let $\TT_{1,\fn}$ be the $W(\FF)$-algebra generated by the Hecke operators $T_\fq$ for primes $\fq \nmid \fn p$ acting faithfully on $S_1(\Gamma_1(\fn),\Qbar_p)$.

\begin{prop}\label{prop:Rmodular}
Let $\rhobar: G_K \to \GL_2(\FF)$ be an absolutely irreducible, totally odd, dihedral representation. Let $R$  be a complete Noetherian local ring with residue field $\FF$ and let $\rho: G_K \to \GL_2(R)$ be a dihedral deformation of~$\rhobar$ with finite image.
Suppose $\rho$ is unramified at all the primes above $p$.
Then there is an ideal $\fn_\rho$ of $K$ coprime to~$p$ and a quotient $\TT_{\dih,\rho}$ of~$\TT_{1,\fn_\rho}$, as well as a Galois representation $\rho_{\dih}: G_K \to \GL_2(\TT_{\dih,\rho})$ and a
homomorphism $\TT_{\dih,\rho} \to R$ such that
\begin{enumerate}
\item $\TT_{\dih,\rho}$ is a complete Noetherian local $W(\FF)$-algebra with residue field $\FF$,
\item $\rho_{\dih}$ is a deformation of $\rhobar$,
\item $\rho$ factors as
$$ \rho: G_K \xrightarrow{\rho_{\dih}} \GL_2(\TT_{\dih,\rho}) \to \GL_2(R).$$
\end{enumerate}
The Galois representation $\rho_{\dih}$ is unramified outside $p \fn_\rho$ and characterised by the property
$$\Tr(\rho_{\dih}(\Frob_\fq))=T_\fq$$
for all primes $\fq \nmid p \fn_\rho$.
\end{prop}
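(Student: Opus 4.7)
The plan is to take $\fn_\rho$ to be the prime-to-$p$ part of the Artin conductor of~$\rho$, which is well defined since $\rho$ has finite image and is unramified above~$p$, and then to argue that $\rho$ factors through a suitable universal dihedral deformation ring $\calU$ into which the Hecke algebra $\TT_{1,\fn_\rho}$ maps by Zariski-density of arithmetic points. Concretely, writing $\rho = \Ind_{G_L}^{G_K}(\chi)$, the character $\chi : G_L \to R^\times$ has finite order and is unramified above~$p$, so $\chi$ is a deformation of $\chibar$ factoring through the appropriate continuous quotient of $G_L$ whose universal character deformation ring $\calU$ has the explicit shape given by Proposition~\ref{prop:univ-char}. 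In particular, $\calU$ is a quotient of a power-series algebra over $W(\FF)$ by elements of the form $(1+X_i)^{p^{e_i}}-1$, and since $X^N-1$ is separable over $\Frac W(\FF)$ for every $N\ge 1$, the ring $\calU$ is reduced. Hence $\rho^\univ := \Ind(\chi^\univ) : G_K \to \GL_2(\calU)$ is a dihedral deformation of $\rhobar$ specialising to $\rho$ via a canonical $W(\FF)$-algebra map $\alpha_\rho : \calU \to R$.

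For every arithmetic point $\alpha : \calU \to \overline{W(\FF)}$, i.e.\ one for which $\alpha \circ \chi^\univ$ has finite order (corresponding to sending every $X_i$ to $\zeta_i - 1$ for a root of unity $\zeta_i$), the induced representation $\rho_\alpha := \Ind(\alpha \circ \chi^\univ)$ is totally odd, absolutely irreducible, unramified above~$p$, and has Artin conductor dividing~$\fn_\rho$; by Lemma~\ref{modlem} it arises as the Galois representation of a parallel weight one Hilbert modular eigenform $f_\alpha$ of level dividing $\Gamma_1(\fn_\rho)$, giving an eigensystem $\lambda_\alpha : \TT_{1,\fn_\rho} \to \overline{W(\FF)}$. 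Such arithmetic points are Zariski dense in $\mathrm{Spec}(\calU)$, and the reducedness of $\calU$ makes the product map $\calU \hookrightarrow \prod_\alpha \overline{W(\FF)}$ injective. Because
\[\lambda_\alpha(T_\fq) \;=\; \Tr(\rho_\alpha(\Frob_\fq)) \;=\; \alpha\bigl(\Tr(\rho^\univ(\Frob_\fq))\bigr)\]
for every $\fq \nmid p\fn_\rho$, the combined map $\prod_\alpha \lambda_\alpha : \TT_{1,\fn_\rho} \to \prod_\alpha \overline{W(\FF)}$ lands inside the image of $\calU$, producing a well-defined ring homomorphism $\Psi : \TT_{1,\fn_\rho} \to \calU$ with $\Psi(T_\fq) = \Tr(\rho^\univ(\Frob_\fq))$.

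Composing $\Psi$ with $\alpha_\rho$ yields a homomorphism $\TT_{1,\fn_\rho} \to R$ which, because $R$ has residue field $\FF$, factors through the local factor $\TT$ of $\TT_{1,\fn_\rho}$ at the maximal ideal $\fm$ cut out by the residual eigensystem $T_\fq \mapsto \Tr(\rhobar(\Frob_\fq))$; I define $\TT_{\dih,\rho}$ to be its image. To construct $\rho_{\dih}$, the trace function $\tau : G_K \to \TT$, $\Frob_\fq \mapsto T_\fq$, assembled from the dihedral Galois representations of the finitely many eigenforms contributing to $\TT$, is a pseudo-representation deforming the trace of $\rhobar$; since $\rhobar$ is absolutely irreducible, standard results on pseudo-representations over local henselian rings (equivalently, the GMA framework of the appendix) lift $\tau$ to a genuine representation $\rho_\TT : G_K \to \GL_2(\TT)$, and $\rho_{\dih}$ is its pushforward to $\TT_{\dih,\rho}$. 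The factorisation $\rho = (\TT_{\dih,\rho} \to R) \circ \rho_{\dih}$ follows by matching Frobenius traces, Chebotarev density, and absolute irreducibility of $\rhobar$; unramifiedness of $\rho_{\dih}$ outside $p\fn_\rho$ is inherited from the constituent modular forms. The hardest step will be the well-definedness of $\Psi$, for which the crucial ingredients are the reducedness of $\calU$ (visible from Proposition~\ref{prop:univ-char}) together with Lemma~\ref{modlem} realising every arithmetic specialisation of $\rho^\univ$ as a Hilbert modular form of controlled level.
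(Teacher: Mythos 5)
Your overall strategy is the same as the paper's: realise the finite-order specialisations of the dihedral family as parallel weight one eigenforms via Lemma~\ref{modlem}, pack them into a local Hecke algebra, identify that Hecke algebra inside a reduced coefficient ring through its $\Qbar_p$-valued points, and descend to a genuine $\GL_2$-valued representation using the absolute irreducibility of $\rhobar$ (Carayol, equivalently pseudo-representations). However, there is a genuine gap in the level bookkeeping, precisely at the step you yourself flag as the hardest. You take $\fn_\rho$ to be the prime-to-$p$ Artin conductor of $\rho$, but you let $\alpha$ range over \emph{all} arithmetic points of the universal character deformation ring $\calU$ of Proposition~\ref{prop:univ-char} (all $X_i \mapsto \zeta_i - 1$). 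Such a point can be more deeply ramified than $\chi$ at a given prime (take $\zeta_i$ of large $p$-power order) and can ramify at primes of the allowed ramification set where $\chi$ itself is unramified; its conductor then need not divide $\fn_\rho$, the form $f_\alpha$ does not lie in $S_1(\Gamma_1(\fn_\rho),\Qbar_p)$, and the eigensystem $\lambda_\alpha$ is simply not defined on $\TT_{1,\fn_\rho}$. So $\Psi$ cannot be built as stated. Nor can you just discard the offending $\alpha$'s: the injectivity of $\calU \to \prod_\alpha \Qbar_p$, which is what lets you pull $\prod_\alpha \lambda_\alpha$ back into $\calU$, needs a Zariski-dense set of points, and the arithmetic points of conductor dividing $\fn_\rho$ need not be dense in $\mathrm{Spec}(\calU)$.

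The repair is exactly what the paper does: replace $\calU$ by the finite reduced $W(\FF)$-algebra $W(\FF)[U]$, where $U = \Image(\chi \cdot \chibarhat^{-1})$ is the finite $p$-group actually cut out by the given $\rho$ (finite because $\rho$ has finite image), so that the relevant $\alpha$'s are only the finitely many characters of~$U$. For these one can check that each conductor $\fn_\alpha$ divides the conductor of~$\rho$, or, as the paper prefers, one sidesteps the question entirely by declaring the level to be $\mathrm{lcm}_\alpha\, \fn_\alpha$; and the embedding $W(\FF)[U] \hookrightarrow \prod_\alpha \Qbar_p^{(\alpha)}$ is automatic since $W(\FF)[U]$ is $W(\FF)$-free and becomes \'etale after inverting~$p$. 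With that replacement the remainder of your argument — Deligne--Serre for locality and the residue field, Chebotarev plus Carayol for the existence of $\rho_\dih$ over the Hecke algebra and for the factorisation of $\rho$ through it — goes through and coincides with the paper's proof.
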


\begin{proof} 
We will refer to $\TT_{\dih,\rho}$ as just $\TT_\dih$ throughout the proof for the ease of notation.
The strategy is to factor the representation $\rho$ via a ring $S$ which will contain the Hecke algebra $\TT_\dih$. We fix a field isomorphism $\Qbar_p \cong \CC$.
Since $\rho$ is assumed to be dihedral, there exists a quadratic extension $L$ of $K$ and a character $\chi : G_L \to R^{\times}$ such that $\rho = \Ind_{G_L}^{G_K}(\chi)$.
We first factor $\chi$ uniquely as a product $\chi = \chibarhat \cdot \psi$, where $\chibarhat$ has prime-to~$p$ order
(equal to the order of $\chibar$ and dividing the order of $\FF^\times$)
and the order of $\psi$ is a power of~$p$. As $\rho$ is assumed to have finite image, we see that $\psi$ is a character of finite order.

Let $U \subseteq R^\times$ be the image of~$\psi$, that is, $U$ is a finite $p$-group. Let $S := W(\FF)[U]$ be the group ring over the Witt vectors of~$\FF$.
By the universal property of Witt vectors, we have a natural morphism $\epsilon: W(\FF) \to R$. Consequently, $\chibarhat$ factors via~$\epsilon$:
$$ \chibarhat: H \xrightarrow{\chibarhat} W(\FF) \xrightarrow{\epsilon} R^\times.$$
Furthermore, by the universal property of group rings, we obtain a unique morphism $\delta: W(\FF)[U] \to R$ which is the identity on~$U$ and given by $\epsilon$ on $W(\FF)$. We can hence also factor $\psi$ as
$$ \psi: H \to U \xrightarrow{\incl} W(\FF)[U]^\times \xrightarrow{\delta} R^\times.$$
Denote by $\psi_S: H \to W(\FF)[U]^\times$ the composition of the first two maps.
Defining $\chi_S := \chibarhat \cdot \psi_S : H \to W(\FF)[U]$, allows us to factor $\chi$ as
$$ \chi : H \xrightarrow{\chi_S} W(\FF)[U]^\times \xrightarrow{\delta} R^\times.$$
We will now factor this map at $W(\FF)[U]$ towards a Hecke algebra.

In order to do so, we consider the homomorphism of $W(\FF)[U]$-algebras
$$ \lambda: W(\FF)[U] \hookrightarrow \Qbar_p[U] \xrightarrow{ \sum_{u \in U} r_u u \mapsto (\sum_{u \in U} r_u \alpha(u))_\alpha} \prod_{\alpha} \Qbar_p^{(\alpha)},$$
where $\alpha$ runs through all $\Qbar_p^\times$-valued characters of the abelian group~$U$ and $\Qbar_p^{(\alpha)}$ is the unique $\Qbar_p$-vector space of dimension~$1$
with $U$-action via the character~$\alpha$. The first map is the natural inclusion and the second one is an isomorphism by standard characteristic zero representation theory of finite groups.

We first factor at $W(\FF)[U]$ using any of the characters $\alpha$ individually by letting
$$\rho_\alpha := \Ind_H^G(\chibarhat \cdot (\alpha \circ \psi_S)): G \to \GL_2(\Qbar_p).$$
This is a deformation of~$\rhobar$.
Let $\fn_\alpha$ be the tame Artin conductor of $\rho_\alpha$. Then $\fn_\alpha$ is coprime to~$p$ and, as $\rho$ is unramified at all primes above $p$, $\rho_{\alpha}$ is unramified outside primes dividing $n_{\alpha}\infty$. Moreover, we have for all prime ideals $\fq \nmid \fn_\alpha$:
$$ \Tr(\rho_\alpha(\Frob_\fq)) = \begin{cases}
\chibarhat(\Frob_{\fq_1}) \cdot \alpha(\psi_S(\Frob_{\fq_1})) + \chibarhat(\Frob_{\fq_2}) \cdot \alpha(\psi_S(\Frob_{\fq_2})) & \textnormal{ if } \fq \calO_L = \fq_1 \fq_2,\\
0 & \textnormal{ otherwise.} \end{cases}$$
By Lemma~\ref{modlem}, there is a unique cuspidal Hilbert modular eigenform $f_\alpha \in S_1(\Gamma_1(\fn_\alpha),\Qbar_p)$, the cuspidal parallel weight one space
of level $\Gamma_1(\fn_\alpha)$, such that $T_\fq f_\alpha =  \Tr(\rho_\alpha(\Frob_\fq)) \cdot f_\alpha$ for all prime ideals $\fq \nmid \fn_\alpha$.

We now use standard arguments to `interpolate all these $f_\alpha$'s' and introduce Hecke algebras for that reason.
Let $\fn$ be the least common multiple of the ideals $\fn_\alpha$, running over all characters~$\alpha$.
Let $W = \langle f_\alpha \;|\; \alpha: U \to \Qbar_p^\times \rangle_{\Qbar_p}$ be the $\Qbar_p$-vector subspace of $S_1(\Gamma_1(\fn),\Qbar_p)$ generated by the~$f_\alpha$.
It is stable under all Hecke operators $T_\fq$ for $\fq$ running through the primes not dividing $p\fn$. Let $\TT_\dih$ be the quotient of $\TT_{1,\fn}$ acting faithfully on~$W$.

Let $\calO$ be the ring of integers of a finite extension of $\QQ_p$ containing the $T_\fq$-eigenvalues of all $f_{\alpha} \in W$ for all primes $\fq \nmid \fn p$ and let $\fm$ be its maximal ideal. Now $\TT_{\dih}$ is a subring of the $\calO$-linear endomorphisms of the $\calO$-span of all $f_\alpha \in W$. Since $\calO$ is finite over $W(\FF)$ and the $\calO$-span of all $f_\alpha \in W$ is finite over $\calO$, we see that $\TT_\dih$ is finite over $W(\FF)$. Hence, it is a complete Noetherian $W(\FF)$-algebra.  

Since for any prime $\fq \nmid \fn p$, the $T_\fq$-eigenvalues of all $f_{\alpha}$'s are congruent modulo $\fm$, it follows, from the Deligne--Serre lemma, that $\TT_\dih$ is local. Let $\fm'$ be its maximal ideal.  Now, for any prime $\fq \nmid \fn p$, the image of $T_\fq$ modulo $\fm'$ is the same as the image of the $T_\fq$-eigenvalue of $f_{\alpha}$ modulo $\fm$ for any $\alpha$. Hence, the image of $T_\fq$ modulo $\fm'$ is $\Tr(\rhobar(\Frob_\fq))$ for all primes $\fq \nmid \fn p$ and, hence, lies in $\FF$. Therefore, it follows that the residue field of $\TT_\dih$ is $\FF$.

In order to describe $\TT_\dih$ explicitly, we use that `multiplicity one' applied to the newforms $f_\alpha$ provides an injective $W(\FF)$-algebra homomorphism
$$ \beta : \TT_\dih \xrightarrow{T_\fq \mapsto \big(\Tr(\rho_\alpha(\Frob_\fq))\big)_\alpha} \prod_{\alpha} \Qbar_p^{(\alpha)}.$$
By the description of $\Tr(\rho_\alpha(\Frob_\fq))$ above we obtain the crucial containment
$$ \Image(\beta) \subseteq \Image(\lambda).$$
We may thus factor $\beta$ as
$$ \beta: \TT_\dih \xrightarrow{\beta'} W(\FF)[U] \xrightarrow{\lambda} \prod_{\alpha} \Qbar_p^{(\alpha)}.$$
In view of the above formulas, we explicitly have
$$ \beta'(T_\fq)=\chibarhat(\Frob_{\fq_1}) \cdot \psi_S(\Frob_{\fq_1}) + \chibarhat(\Frob_{\fq_2}) \cdot \psi_S(\Frob_{\fq_2})
= \chi_S(\Frob_{\fq_1})+\chi_S(\Frob_{\fq_2})$$
if $\fq \calO_L = \fq_1 \fq_2$ and $\beta'(T_\fq)=0$ otherwise.
Consequently, $\delta(\beta'(T_\fq))=\Tr(\rho(\Frob_\fq))$ for all primes $\fq$ not dividing~$\fn p$.

Now, the representation $\rho_{\alpha}$ takes values in $\GL_2(\calO)$ for all $f_{\alpha} \in W$ and $\Image(\beta)$ also lies in $\prod_{\alpha}\calO^{(\alpha)}$. Consider $\rho'=\prod_{\alpha}\rho_{\alpha} : G_K \to \prod_{\alpha} \GL_2(\calO)$.
Note that $\rho_{\alpha} \pmod{\fm} = \rhobar$ is absolutely irreducible. Now by the Chebotarev density theorem, $\Tr(\rho'(g)) \in \Image(\beta)$ for all $g \in G_K$.
Hence, applying \cite[Th\'eor\`eme~2]{carayol} to $\rho'$, we obtain a Galois representation
$$ \rho_{\dih}: G_K \to \GL_2(\TT_\dih)$$
which is unramified outside $\fn$ and satisfies $\Tr(\rho_{\dih}(\Frob_\fq))=T_\fq$ for all prime ideals $\fq$ not dividing~$p\fn$.

Finally, we consider the composition
$$r : G_K \xrightarrow{\rho_{\dih}} \GL_2(\TT_\dih)\xrightarrow{\beta'} \GL_2(W(\FF)[U]) \xrightarrow{\delta} \GL_2(R).$$
We see $\Tr(r(\Frob_\fq)) = \Tr(\rho(\Frob_\fq))$ as well as $\Tr(\rho_{\dih}(\Frob_\fq)) \pmod{\fm'}= \Tr(\rhobar(\Frob_\fq))$ for all primes $\fq \nmid p \fn$.
By \cite[Th\'eor\`eme~1]{carayol} we obtain that $r$ and $\rho$ are isomorphic and $\rho_{\dih} \pmod{\fm'}$ and $\rhobar$ are isomorphic, proving the proposition. 
\end{proof}

We next apply this result to prove an `$R=\TT$' theorem for the case that the universal deformation is dihedral. We first strive at a large generality.

\begin{defi}\label{def:P}
Let {\bf P} be a property that a deformation of $\rhobar$ may (or not) satisfy. We say that {\bf P} is a deformation condition if $\rhobar$ satisfies {\bf P}, {\bf P} only depends
on the equivalence class of the representation and there exists a deformation $\rho^\univ_{\bf P}: G \to \GL_2(R^\univ_{\bf P})$ of~$\rhobar$ that is universal for the property {\bf P} in the sense that for exactly those
$\rho:G \to \GL_2(R)$ having property {\bf P}, there is a unique morphism $\phi: R^\univ_{\bf P} \to R$ such that $\rho = \phi \circ \rho^\univ_{\bf P}$.
\end{defi}

See \cite[Section 23]{Ma} for an alternate description of deformation conditions.
\begin{thm}\label{thm:ReqT}
Let $\bf P$ be a deformation condition in the sense of Definition~\ref{def:P}.
Suppose that $\rho^\univ_{\bf P}$ is dihedral and unramified above~$p$.
Let $\TT_\dih := \TT_{\dih,\rho^\univ_{\bf P}}$ be the Hecke algebra of parallel weight one attached to $\rho^\univ_{\bf P}$ by Proposition~\ref{prop:Rmodular}. Let $\psi: \TT_\dih\to R^\univ_{\bf P}$
and $\rho_\dih$ be the associated ring homomorphism and Galois representation, respectively. Suppose further that $\rho_\dih$ also satisfies {\bf P}.

Then $R^\univ_{\bf P} \cong \TT_\dih$.
\end{thm}

\begin{proof}
As $\rho^\univ_{\bf P}$ is dihedral and unramified at $p$, it follows, by global class field theory, that it has finite image (see Proposition~\ref{prop:univ-char} and the discussion before Corollary~\ref{corstr}). Hence, we can appeal to  Proposition~\ref{prop:Rmodular}. Since $\rho_\dih$ satisfies {\bf P}, there is a morphism $\phi: R^\univ_{\bf P} \to \TT_\dih$ such that $\rho_\dih = \phi \circ \rho^\univ_{\bf P}$.
As $\TT_\dih$ is generated by $T_\fq = \Tr(\rho_{\TT_\dih}(\Frob_\fq)) = \phi(\Tr(\rho^\univ_{\bf P}(\Frob_\fq)))$ for primes $\fq\nmid p\fn$, the map $\phi$ is surjective.
By Proposition~\ref{prop:Rmodular}, we know that $\psi \circ \rho_\dih = \rho^\univ_{\bf P}$.
So, the composition $R^\univ_{\bf P} \xrightarrow{\phi} \TT_\dih \xrightarrow{\psi} R^\univ_{\bf P}$ sends $\rho^{\univ}_{\bf P}$ to itself, {\it i.e.} $\psi \circ \phi : R^\univ_{\bf P} \to R^\univ_{\bf P}$ is a morphism such that $\psi \circ \phi \circ \rho^\univ_{\bf P} = \rho^\univ_{\bf P}$.
Hence, the universality of $R^\univ_{\bf P}$ implies that $\psi \circ \phi$ is the identity morphism.
This proves that $\phi$ is also injective. 
\end{proof}

\begin{setup}\label{setup:modularity}
We continue to assume Set-up~\ref{setup:dih} in the above setting.
Thus, we have a number field $K$, a quadratic extension $L$ of $K$, a finite extension $\FF$ of $\FF_p$ and a character $\chibar : G_L \to \FF^{\times}$ such that the representation $\rhobar=\Ind_{G_L}^{G_K}(\chibar) : G_K \to \GL_2(\FF)$ is absolutely irreducible.
Let $D$ be the tame Artin conductor of~$\rhobar$.

For the rest of this section, we specialise Set-up~\ref{setup:dih} as indicated in section~\ref{sec:intro-modularity}:
\begin{enumerate}
\item $p$ is odd.
\item \label{ass:1} $K$ is totally real.
\item \label{ass:2} $\chibar$ is such that $\rhobar$ is totally odd.
\item \label{ass:4}$\rhobar$ is unramified at all places of $K$ above $p$, i.e.\ $S_{\rhobar} \cap S_p = \emptyset$.
\item\label{ass:5} If a prime $\ell$ of $K$ ramifies in $M^{\rhobar}$ and $\rhobar|_{G_{K_{\ell}}}$ is not absolutely irreducible, then $\dim((\rhobar)^{I_{\ell}}) =1$ where $(\rhobar)^{I_{\ell}}$ denotes the subspace of $\rhobar$ fixed by the inertia group $I_{\ell}$ at $\ell$.
\item \label{ass:3} $\Hom_{\FF_p[\Gbar^\ad]}(A(M^{\ad})/pA(M^{\ad}) ,I(\chibar/\chibar^{\sigma})) =0$.
\end{enumerate}
For $K = \QQ$, conditions \ref{ass:4} and \ref{ass:5} are the ones given in \cite[Section $3.1$]{CG}.
\end{setup}

A prime~$\ell$ of $K$ is called a {\em vexing prime} if $\rhobar$ ramifies at~$\ell$, $\rhobar|_{G_{K_{\ell}}}$ is absolutely irreducible, $\rhobar|_{I_{\ell}}$ is \emph{not} absolutely irreducible and $[K_{\ell}(\mu_p) : K_{\ell}]=2$.
Note that the definition of vexing primes makes sense for any absolutely irreducible $\rhobar : G_K \to \GL_2(\FF)$.
Recall that for an absolutely irreducible $\rhobar :G_K \to \GL_2(\FF)$ and a finite set of primes $S$ of $K$, we have defined $R_S^\univ$ (resp. $\rho_S^{\univ}$) to be the universal deformation ring (resp. universal deformation) of $\rhobar$ relatively unramified outside $S$ and $(R_S^\univ)^0$ (resp. $(\rho_S^{\univ})^0$) to be its constant determinant counterpart (see \S\ref{sec:nt}).
We will now define minimal deformation problems, following~\cite{CG}.
\begin{defi}\label{mindef}
Let $R$ be an object of $\calC$ and let $\rhobar : G_K \to \GL_2(\FF)$ be an absolutely irreducible representation satisfying assumption~\eqref{ass:5} of Set-up~\ref{setup:modularity}. A deformation $\rho : G_K \to \GL_2(R)$ of $\rhobar$ is called minimal if it satisfies all the following properties:
\begin{enumerate}
\item $\det\rho = \widehat{\det(\rhobar)}$.
\item $\rho$ is unramified at primes at which $\rhobar$ is unramified.
\item If $\ell$ is a vexing prime, then $\rho(I_{\ell}) \simeq \rhobar(I_{\ell})$.
\item If $\ell$ is a prime such that $\rhobar$ is ramified at $\ell$ and $\rhobar|_{G_{K_{\ell}}}$ is not absolutely irreducible, then $\rho^{I_{\ell}}$ is a rank $1$ direct summand of $\rho$ as an $R$-module.
\end{enumerate}
For $K=\QQ$ and odd $\rhobar$, this is just \cite[Definition $3.1$]{CG}.
\end{defi}

It follows, from the proof of \cite[Theorem $2.41$]{DDT}, that the functor from $\calC$ to the category of sets sending an object $R$ of $\calC$ to the set of continuous, minimal deformations of $\rhobar$ to $\GL_2(R)$ is representable by a ring in~$\calC$ (see \cite[Section $3.1$]{CG} as well).
We will denote this ring by $R^\minim$ and we will denote the universal minimal deformation by $\rho^\minim$.

\begin{lem}\label{lem:min-univ}
Let $\rhobar : G_K \to \GL_2(\FF)$ be an absolutely irreducible representation satisfying assumptions~\eqref{ass:4} and~\eqref{ass:5} of Set-up~\ref{setup:modularity}. 
Let $D$ be the tame Artin conductor of $\rhobar$ and let $S$ be the union of $S_\infty$ and the set of primes $\ell$ of $K$ such that $\ell \mid D$, $\rhobar|_{G_{K_{\ell}}}$ is absolutely irreducible and $\ell$ is not a vexing prime. 
Then $R^\minim \simeq (R^{\univ}_S)^0$.
\end{lem}

\begin{proof}
A minimal deformation is unramified at primes of $K$ not dividing~$D$.
Let $\ell$ be a prime dividing~$D$. By the definition of minimal deformations, if $\ell$ is a vexing prime, then $\rho^\minim$ is a deformation of~$\rhobar$ which is relatively unramified at $\ell$, i.e.\ if $\ell$ is a vexing prime, then $\rho^\minim(I_{\ell}) \simeq \rhobar(I_{\ell})$.
If $\rhobar|_{G_{K_{\ell}}}$ is not absolutely irreducible, then we have assumed that the subspace $(\rhobar)^{I_{\ell}}$ has dimension~$1$. So, $\rhobar|_{I_{\ell}} = 1 \oplus \delta$ for some non-trivial character~$\delta$. The minimality condition means that $(\rho^\minim)^{I_{\ell}}$ is a free $R^\minim$-module of rank~$1$ which is a direct summand of $\rho^\minim$ as an $R^\minim$-module. As $\det\rho^\minim$ is the Teichm\"uller lift of $\det\rhobar$, we get that $\rho^\minim|_{I_{\ell}} \simeq \begin{pmatrix} 1 & *\\ 0 & \widehat{\delta}\end{pmatrix}$. We have two cases:
\begin{enumerate}
\item \textbf{$\delta$ is tamely ramified:} In this case, $\rho^\minim(I_{\ell})$ factors through the tame inertia quotient of~$I_{\ell}$ and is hence abelian. This means that the $*$ above is necessarily $0$ as $\widehat{\delta}$ is non-trivial. Therefore, we get that $\rho^\minim|_{I_{\ell}} \simeq 1 \oplus \widehat{\delta}$. Thus, $\rho^\minim$ is a deformation of~$\rhobar$ which is relatively unramified at $\ell$.
\item \textbf{$\delta$ is wildly ramified:} Let $W_{\ell}$ be the wild inertia group at $\ell$. As $\ell \nmid p$, $W_{\ell}$ does not admit any non-trivial pro-$p$ quotient. So, $\rho^\minim|_{W_{\ell}} \simeq 1 \oplus \widehat{\delta}|_{W_{\ell}}$. As $W_{\ell}$ is a normal subgroup of $I_{\ell}$ and $1 \neq \widehat{\delta}|_{W_{\ell}}$, we see that the submodules of $\rho^\minim$ on which $W_{\ell}$ acts via $1$ or $\widehat{\delta}$ are also $I_{\ell}$-invariant. Therefore, we get that $\rho^\minim|_{I_{\ell}} \simeq 1 \oplus \widehat{\delta}$. Hence, $\rho^\minim$ is a deformation of~$\rhobar$ which is relatively unramified at $\ell$.
\end{enumerate}
Note that the primes considered above are exactly the primes of $K$ which divide $D$ but are not in $S$. Being a minimal deformation does not put any conditions on any other primes of $K$ dividing~$D$. Thus, $\rho^\minim$ is relatively unramified outside~$S$ and has constant determinant. On the other hand, any deformation of $\rhobar$ which is relatively unramified outside~$S$ with constant determinant is also minimal by definition. Hence, we get morphisms $\alpha : R^\minim \to(R^{\univ}_S)^0$ and $\beta : (R^{\univ}_S)^0 \to R^\minim$. It follows, from looking at the corresponding deformations, that both morphisms $\alpha \circ \beta$ and $\beta \circ \alpha$ are the identity and hence, $R^\minim \simeq (R^{\univ}_S)^0$.
\end{proof}

\begin{prop}\label{prop:min-univ}
Let $\rhobar : G_K \to \GL_2(\FF)$ be a dihedral representation satisfying all the assumptions of Set-up~\ref{setup:modularity}.
Then $\rho^\minim$ is dihedral.
\end{prop}

\begin{proof}
From Lemma~\ref{lem:min-univ}, we know that $R^\minim \simeq (R^{\univ}_S)^0$, where $S$ is the union of $S_\infty$ and the set of primes $\ell$ of $K$ such that $\ell \mid D$, $\rhobar|_{G_{K_{\ell}}}$ is absolutely irreducible and $\ell$ is not a vexing prime.
Moreover, the proof of Lemma~\ref{lem:min-univ} implies that this isomorphism takes $\rho^\minim$ to $(\rho^\univ_S)^0$.
Let $S_1$, $S_2$, $S_3$ and $S_0$ be the set of primes of $K$ defined in \S\ref{sec:nt} right after Proposition~\ref{prop:CL-Ind}. We keep using Notation~\ref{not:nt} and prove that $S \cap S_0 = \emptyset$. Indeed, let $\ell \in S \cap S_0$ be a finite place.
Firstly, $\ell \in S$ implies that $\rhobar|_{G_{K_{\ell}}}$ is absolutely irreducible and $\ell$ is not a vexing prime. As $\rhobar|_{G_{K_{\ell}}}$ is absolutely irreducible and $\rhobar$ ramifies at $\ell$, the assumption that $\ell$ is not a vexing prime means that either $[K_{\ell}(\mu_p):K_{\ell}] \neq 2$ or $\rhobar|_{I_{\ell}}$ is absolutely irreducible. Now, if $q \in S_1 \cup S_2$, then it follows, from the definitions of $S_1$ and $S_2$, that the projective image of $\rhobar|_{G_{K_q}}$ is cyclic. Therefore, the image of $\rhobar|_{G_{K_q}}$ is abelian and, hence, $\rhobar|_{G_{K_q}}$ is not absolutely irreducible. So, $\ell \not\in S_1 \cup S_2$ which means that $\ell \in S_3$. From the definition of $S_3$, we get that $[K_{\ell}(\mu_p):K_{\ell}]=2$ and $|\ad\rhobar(I_{\ell})|=2$. Thus, the projective image of $\rhobar|_{I_{\ell}}$ is a cyclic group (of order~$2$). Hence, the image of $\rhobar|_{I_{\ell}}$ is abelian. So, it follows that $\rhobar|_{I_{\ell}}$ is not absolutely irreducible. This contradicts our assumption that $\ell$ is not a vexing prime. Hence, we get that $S \cap S_0 =\emptyset$.
This allows us to apply Theorem~\ref{thm:nt-dih} to conclude that $\rho_S^\univ$ is dihedral. The result follows. 
\end{proof}

\begin{proof}[Proof of Theorem~\ref{minthm}.]
Let $S$ be as in Proposition~\ref{prop:min-univ}, so that we can describe $R^\minim$ as $(R_S^\univ)^0$. By Proposition~\ref{prop:min-univ} and Proposition~\ref{prop:Rmodular}, there exists a quotient $\TT_\dih$ of $\TT_{1,D'}$, a deformation $\rho_\dih : G_K \to \GL_2(\TT_\dih)$ of $\rhobar$ and a map $\phi : \TT_\dih \to R^\minim$ such that $\phi \circ \rho_\dih = \rho^{\minim}$. Here $D'$ is an ideal of $\calO_K$ such that $D | D'$ and $D'/D$ is only divisible by primes lying in $S$.
By construction the determinant of $\rho_\dih$ is equal to the Teichm\"uller lift of~$\rhobar$.
By Proposition~\ref{prop:min-univ}, $\rho^\minim$ factors through $\Gal(M^{\rhobar}(S)/K)$.
Hence, from the proof of Proposition~\ref{prop:Rmodular}, it follows that $\rho_\dih$ factors through $\Gal(M^{\rhobar}(S)/K)$.
Hence $\rho_\dih$ a deformation of $\rhobar$ that is relatively unramified outside~$S$ with constant determinant.
In view of Proposition~\ref{prop:min-univ}, $\rho_\dih$ is hence a minimal deformation of~$\rhobar$.
Consequently, Theorem~\ref{thm:ReqT} finishes the proof. 
\end{proof}

\begin{rem}\label{rem:non-lift}
If $\Hom_{\FF_p[\Gbar^\ad]}(A(M^{\ad})/p A(M^{\ad}), I(\chibar/\chibar^{\sigma})) \neq 0$, then, as seen in Remark~\ref{rem:nt-dih}, there exists a non-dihedral infinitesimal deformation which is relatively unramified everywhere.
Hence, we get a non-dihedral infinitesimal minimal deformation  which means that, in this case, the universal minimal deformation is not dihedral. So, the methods of this article will not be useful to prove an $R^\minim = \TT$ theorem.
\end{rem}

Note that we can remove assumption~\ref{ass:5} from Set-up~\ref{setup:modularity} and look at deformations unramified outside $S (\supseteq S_{\infty})$ with constant determinant for a finite set $S$ of primes of $K$ with $S \cap (S_0 \cup S_p) = \emptyset$ instead of minimal deformations. In this case, our methods will not give an $R=\TT$ theorem, but we can still conclude the following:

\begin{prop}
Let $\rhobar$ be a dihedral representation satisfying all the assumptions of Set-up~\ref{setup:modularity} except possibly assumption~\ref{ass:5}. Let $\mathcal{R}$ be the ring of integers of a finite extension of $\QQ_p$ such that the residue field of $\mathcal{R}$ contains $\FF$. Let $S (\supset S_{\infty})$ be a finite set of primes of $K$ with $S \cap (S_0 \cup S_p) = \emptyset$. If $\rho : G_K \to \GL_2(\mathcal{R})$ is a deformation of $\rhobar$ with constant determinant which is relatively unramified outside $S$, then there exists a classical Hilbert modular eigenform $f$ of parallel weight one over $K$ such that $\rho$ is isomorphic to the Galois representation attached to $f$.
\end{prop}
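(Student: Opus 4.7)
The plan is to deduce this proposition directly from Theorem~\ref{thm:nt-dih}, Theorem~\ref{thm:inf-dih-univ}(b), Corollary~\ref{corstr}, and Lemma~\ref{modlem}. The key observation is that assumption~\ref{ass:5} of Set-up~\ref{setup:modularity} was only used in Proposition~\ref{prop:min-univ} to identify $R^\minim$ with $(R^\univ_S)^0$; it plays no role in the dihedrality theorem or in the modularity lemma, so it may be dropped for the present statement.

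First I would invoke Theorem~\ref{thm:nt-dih}: the hypotheses $S_\infty \subseteq S$, $S \cap S_p = \emptyset$, and $S \cap S_0 = \emptyset$ are given, assumption~\ref{ass:3} of Set-up~\ref{setup:modularity} supplies the required vanishing of $\Hom_{\FF_p[\Gbar^\ad]}(A(M^\ad)/pA(M^\ad), I(\chibar/\chibar^\sigma))$, and the additional hypothesis in Theorem~\ref{thm:nt-dih} for $p=2$ is vacuous since $p$ is odd. Thus $\rho^\univ_S$ is dihedral and, by Theorem~\ref{thm:inf-dih-univ}(b), of the form $\Ind_{G_L}^{G_K}(\chi^\univ)$ for the universal deformation $\chi^\univ$ of $\chibar$ (interchanging $\chibar$ with $\chibar^\sigma$ if necessary). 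Passing to the constant determinant quotient $(\rho^\univ_S)^0$, universality then gives a ring homomorphism $(R^\univ_S)^0 \to \mathcal{R}$ through which $\rho$ factors, so $\rho \cong \Ind_{G_L}^{G_K}(\chi)$ for a specialised character $\chi : G_L \to \mathcal{R}^\times$ lifting $\chibar$.

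Next I would verify the hypotheses of Lemma~\ref{modlem}. The character $\chi$ has finite order: by the global class field theory argument preceding Corollary~\ref{corstr} (which uses $S \cap S_p = \emptyset$ in an essential way), the maximal abelian pro-$p$ extension $L_S$ of $L$ unramified outside $S$ is finite, and Corollary~\ref{corstr} together with the explicit description of $\chi^\univ$ in Proposition~\ref{prop:univ-char} shows that $\Image(\chi^\univ)$, and hence $\Image(\chi)$, is a finite subgroup of the unit group. The character $\chi$ is unramified above $p$ because $\rho$ is: this combines $S \cap S_p = \emptyset$ (so $\rho$ is relatively unramified above $p$) with assumption~\ref{ass:4} that $\rhobar$ itself is unramified above $p$. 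Finally $\rho$ is absolutely irreducible since it lifts the absolutely irreducible $\rhobar$, and it is totally odd because $\det \rho = \widehat{\det \rhobar}$ and $\rhobar$ is totally odd by assumption~\ref{ass:2}, with $K$ totally real by assumption~\ref{ass:1}.

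Lemma~\ref{modlem} then produces the desired parallel weight one Hilbert modular eigenform $f$ over $K$ with $\rho_f \cong \rho$, completing the argument. I do not anticipate any genuine obstacle, since the hard representation-theoretic and class-field-theoretic content has already been carried out in Sections~\ref{sec:rep} and~\ref{sec:nt}; the only non-routine bookkeeping is tracking that $(R^\univ_S)^0$ is genuinely a finite $W(\FF)$-algebra in our setting, which is what forces $\chi$ to have finite order rather than merely finite-image reduction.
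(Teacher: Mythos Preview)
Your proposal is correct and follows the same route as the paper's own proof, which is simply the two-line argument ``by Theorem~\ref{thm:nt-dih}, $\rho$ is dihedral; by Lemma~\ref{modlem} it is modular.'' You have correctly expanded this by checking the hypotheses of both results explicitly (in particular the finiteness of $\chi$, unramifiedness above~$p$, oddness, and irreducibility), and your observation that assumption~\ref{ass:5} is irrelevant here is exactly the point of the proposition. The detour through $(R^\univ_S)^0$ is harmless but not needed: once $\rho^\univ_S$ is dihedral, any specialisation of it is dihedral, regardless of the determinant condition.
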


\begin{proof}
From Theorem~\ref{thm:nt-dih}, it follows that $\rho$ is a dihedral representation. From Lemma~\ref{modlem}, we conclude the existence of the parallel weight one eigenform $f$ over $K$ having the required property. 
\end{proof}

See \cite[Theorem 1.1]{C} for a similar but much stronger result for $K=\QQ$.

\begin{rem}
To conclude that $\rho$ is dihedral we do not need the hypotheses~\ref{ass:1} ($K$ is totally real) and $\ref{ass:2}$ ($\rhobar$ is totally odd). Hence, if we further remove the assumptions that $K$ is totally real and $\rhobar$ is totally odd from the Proposition above, then we can still conclude, by automorphic induction (\cite{G}, \cite{R}), that $\rho$ comes from an automorphic representation for $\GL_2(K)$.
\end{rem}

\section{Examples}\label{sec:examples}

In this section, we present several examples of irreducible dihedral representations~$\rhobar: G \to \GL_2(\FF)$ as in the rest of the article and determine whether their universal deformation relatively unramified outside a finite set~$S$ is dihedral or not.
Most of the time we take $S=S_\infty$, {\it i.e.} we consider deformations that are relatively unramified at all finite places.

\subsection{Examples of $\rhobar$ such that $\rho_{S_{\infty}}^\univ$ is non-dihedral for $p=2$}
For $p=2$, there is, in a sense, a generic source of examples where a dihedral representation deforms infinitesimally into a non-dihedral one.
Denote by $S_n$ the symmetric group on $n$ letters.
We start with an $S_4$-extension $M/K$ of number fields.
We know that the double-transpositions generate the normal subgroup $V_4 = \ZZ/2\ZZ \times \ZZ/2\ZZ$ of~$S_4$ the quotient of which is isomorphic
to~$S_3 \cong D_3 \cong \SL_2(\FF_2)$. Moreover, the surjection $S_4 \twoheadrightarrow S_3$ is split by the natural map $S_3 \hookrightarrow S_4$ and the conjugation action by $S_3$ on $V_4$ is non-trivial and thus, after identifying $S_3 \cong \SL_2(\FF_2)$, we have that $V_4$ becomes $I=\Ind_H^G(\chibar) = I(\chibar/\chibar^\sigma)$.
We thus find the following commutative diagram with split exact rows:
$$ \xymatrix@=.5cm{
 & & \Gal(M/K) \ar@{->}[d]^\sim  \ar@{->>}[rd] \ar@{->}@/_2pc/[dd]_(.7){\rho} \ar@{->}[rdd]_(.6){\rhobar} \\
0 \ar@{->}[r]& V_4 \ar@{->}[r] \ar@{^(->}[d]^{\id \oplus 0} & S_4 \ar@{->}[r] \ar@{^(->}[d] & S_3 \ar@{->}[r]\ar@{=}[d] & 0 \\
0 \ar@{->}[r]& I \oplus C(1) \ar@{->}[r] & \SL_2(\FF_2[\epsilon]/(\epsilon^2))\ar@{->}[r] & \SL_2(\FF_2) \ar@{->}[r] & 0 \\
}$$
We thus see that $\rho$ is an infinitesimal deformation of the representation~$\rhobar$.
In order to satisfy Definition~\ref{defi:dihedral}, we must extend the scalars of both $\rhobar$ and $\rho$ from $\FF_2$ to~$\FF_4$. Then $\rhobar$ is a dihedral
representation admitting the non-dihedral deformation~$\rho$.
This situation occurs, for instance, for $K=\QQ$ and the $S_3$-extension of~$\QQ$ given by the Hilbert class field of $Q(\sqrt{229})$.
This is a totally real field and its ray class field ramifying only at infinity provides the desired $S_4$-extension of~$\QQ$.

\subsection{Strategy for finding examples of $\rhobar$ such that $\rho_{S_{\infty}}^\univ$ is dihedral for $p>2$}
For the other examples, we take $S=S_\infty$ (i.e.\ we only consider relatively unramified deformations), $p>2$ and $\rhobar$ that are unramified above~$p$.
In that case, the only condition in Theorem~\ref{thm:nt-dih} is that the induced representation $I(\chibar/\chibar^\sigma)$ does not occur
in the $p$-part of the class group of $M^\ad$.

Let $K$ be a number field and $L$ a quadratic extension of~$K$.
For simplicity, we shall only consider cases when a chosen odd prime $q \neq p$ exactly divides the class number of~$L$. Let $M/L$ be the corresponding cyclic extension with Galois group $\ZZ/q\ZZ$ inside the Hilbert class field of~$L$. Note that $M/K$ is Galois.
We shall further assume that the Galois group of $M/K$ is not $\ZZ/2\ZZ\times \ZZ/q\ZZ$, whence it automatically is isomorphic to the dihedral group~$D_q$.
We fix a character $\chibar: G_L \to \FF_{p^r}^\times$ of kernel~$G_M$ with $r$ the multiplicative order of~$q$ modulo~$p$.
Note that $\chibar^\sigma = \chibar^{-1}$ for any $\sigma \in G_K \setminus G_L$.

Next consider the maximal elementary abelian $p$-extension $M_1/M$ (resp.\ $L_1/L$) inside the Hilbert class field of~$M$ (resp.\ of~$L$).
We shall consider the group $\calG := \Gal(M_1/M)$ as $\FF_p[\Gal(M/K)]$-module.
Then two mutually exclusive cases can arise.

\begin{enumerate}[(1)]
\item \label{case:1} $[L_1:L]=[M_1:M]$.

This happens if and only if $M_1 = L_1M$.
This condition is furthermore equivalent to the only simple $\FF_p[\Gal(M/K)]$-modules occuring in $\calG$ being $1$-dimensional (as $\FF_p$-vector space) and, thus, either $C(1)$ or $C(\epsilon)$ with $\epsilon: G_K \twoheadrightarrow \Gal(L/K) \cong \{\pm 1\}$.

In this case, $\calG$ is trivial as an $\FF_p[\Gal(M/L)]$-module. Let $\rhobar := \Ind_{G_L}^{G_K}(\chibar^b)$ for any $b \in \FF_q^\times$.
In the notation of Set-up~\ref{setup:two}, $\Hbar^\ad = \Gal(M/L)$ and we have that $\calG$ is trivial as $\FF_p[\Hbar^\ad]$-module.
For any infinitesimal deformation $\rho: G_K \to \GL_2(R)$ of~$\rhobar$ that is everywhere relatively unramified, the corresponding $\Gamma_\rho$ is a quotient of~$\calG$,
and thus trivial as $\FF_p[\Hbar^\ad]$-module.
Consequently, by Theorem~\ref{thm:inf-dih}, $\rho$ is dihedral, so that by Theorem~\ref{thm:inf-dih-univ}, the universal relatively unramified deformation $\rho^\univ$ of $\rhobar$ is dihedral.

\item \label{case:2} $[L_1:L]<[M_1:M]$.

This is the case if and only if $\calG$ contains an irreducible $\FF_p[\Gal(M/K)]$-module of $\FF_p$-dimension at least~$2$.

In this case, by the representation theory of the dihedral group $D_q$, this representation is then $I := \Ind_{G_L}^{G_K}(\chibar^a)$ (defined over its minimal
field of definition, but viewed as $\FF_p[\Gal(M/K)]$-module) for some $a \in \FF_q^\times$. Let now $b \in \FF_q^\times$ be such that $2b=a$.
Now consider $\rhobar := \Ind_{G_L}^{G_K}(\chibar^b)$. In the notation of Set-up~\ref{setup:two}, $\Gbar^\ad = \Gal(M/K)$.
Then $I = I((\chibar^b)/(\chibar^b)^\sigma)$, which occurs in $\ad(\rhobar)$ as $\FF_p[\Gbar^\ad]$-module according to Lemma~\ref{lem:indec}.
By Proposition~\ref{prop:infi-lift}, there is thus a deformation $\rho_I$ of~$\rhobar$, which is non-dihedral according to Theorem~\ref{thm:inf-dih}.
Consequently, the universal everywhere relatively unramified deformation $\rho^\univ$ of $\rhobar$ is non-dihedral.
\end{enumerate}

Note that we are sure to be in case~\eqref{case:1} if $p$ does not divide the quotient of the class number of $M$ by the class number of~$L$.
Conversely, suppose that the $p$-part of the Hilbert class field of~$L$ equals~$L_1$ (i.e.\ the $p$-part of the class group is an
elementary abelian $p$-group) and assume $[L_1:L]=[M_1:M]$.
Let $M_2$ be the $p$-part of the Hilbert class field of~$M$.
Then the $p$-Frattini quotient of $\Gal(M_2/M)$ is its maximal elementary abelian quotient $\Gal(M_1/M)$.
The group $\Gal(M/L)$ is of order prime-to~$p$ and acts trivially on $\Gal(M_1/M)$,
and, thus, by Proposition~\ref{prop:coprime-aut}, it also acts trivially on $\Gal(M_2/M)$. There is thus an extension $L_2/L$ such that
$M_2=L_2M$ with $L_2$ inside the Hilbert class field of~$L$. By assumption, $L_2=L_1$ and consequently $M_2=M_1$.
This means that $p$ does not divide the quotient of the class number of~$M$ by the class number of~$L$.

We summarise the discussion so far: Starting with a number field~$K$, we take a quadratic extension $L/K$ such that an odd prime $q \neq p$ exactly
divides the class number of~$L$ and we let $M$ be the corresponding cyclic $\ZZ/q\ZZ$-extension of~$L$ inside the Hilbert class field of~$L$,
assuming that $\Gal(M/K)$ is dihedral and that the $p$-part of the class group of~$L$ is of exponent~$p$.
Then we are in case~\eqref{case:1} if and only if $p$ does not divide the quotient of the class number of~$M$ by the class number of~$L$;
otherwise we are in case~\eqref{case:2}.

This observation allows us to derive concrete examples of dihedral $\rhobar$ satisfying desired hypotheses by computing class numbers of abelian extensions.
To be precise, after computing the class numbers, we got examples of both case~\eqref{case:1} and case~\eqref{case:2} (see \S\ref{sec:eg1}, \S\ref{sec:eg2}, \S\ref{sec:eg3} and \S\ref{sec:eg4}).
Thus we get examples of dihderal $\rhobar$'s of both kinds, one where everywhere relatively unramified universal deformation of $\rhobar$ is dihedral (case~\eqref{case:1}) and one where it is not dihedral (case~\eqref{case:2}).
All computations were performed using Magma~\cite{magma} under the assumption of the Generalised Riemann Hypothesis (GRH).

\subsection{Examples of cases~\eqref{case:1} and \eqref{case:2} for $K=\QQ$}\label{sec:eg1}

The first set of examples is for the base field~$K=\QQ$ and aims at providing examples for both cases. We want these examples
to be non-trivial, in the sense that there does exist a non-trivial dihedral infinitesimal deformation.
We did some small systematic calculation among imaginary quadratic fields~$L$ of class numbers $15, 21, 33, 35$. The four numbers are
products of two distinct primes and we took $p$ and $q$ to be either choice.
The results are summarised in the following table.
\begin{longtable}{l|l||p{6.5cm}|p{6.5cm}}
$p$ & $q$ & fields $L$ & results \\
\hline
\hline
$5$ & $3$ & all imaginary quadratic fields of class number $15$ & 
\begin{minipage}{7cm} case~\eqref{case:2}: discriminants: $-4219$, $-19867$ \\ case~\eqref{case:1}: all 66 others \end{minipage}\\
\hline
$3$ & $5$ & all imaginary quadratic fields of class number $15$ of abs.\ value of discriminants $\le 19387$ &  case~\eqref{case:1}: all 60 fields \\
\hline
$7$ & $3$ & all imaginary quadratic fields of class number $21$ of abs.\ value of discriminants $\le 14419$ & 
\begin{minipage}{7cm} case~\eqref{case:2}: discriminant: $-8059$ \\ case~\eqref{case:1}: all 41 others \end{minipage}\\
\hline
$3$ & $7$ & all imaginary quadratic fields of class number $21$ of abs.\ value of discriminants $\le 5867$ &  case~\eqref{case:1}: all 18 fields \\
\hline
$11$ & $3$ & all imaginary quadratic fields of class number $33$ of abs.\ value of discriminants $\le 28163$ &  case~\eqref{case:1}: all 38 fields \\
\hline
$3$ & $11$ & all imaginary quadratic fields of class number $33$ of abs.\ value of discriminants $\le 1583$ &  case~\eqref{case:1}: all 2 fields \\
\hline
$7$ & $5$ & all imaginary quadratic fields of class number $35$ of abs.\ value of discriminants $\le 16451$ &  case~\eqref{case:1}: all 25 fields \\
\hline
$5$ & $7$ & all imaginary quadratic fields of class number $35$ of abs.\ value of discriminants $\le 4931$ &  case~\eqref{case:1}: all 7 fields \\
\end{longtable}

\subsection{Examples of case~\eqref{case:1} for quadratic extensions of $\QQ$}\label{sec:eg2}
We also looked at examples for quadratic base fields~$K$. In the first set of examples of this kind, let $K=\QQ(\sqrt{d})$ for $d=2,5,13,17$.
We ran through some CM extensions $L$ of~$K$ that admit a class number that is divisible by two odd primes $p,q$ to the first power,
with $q$ being $3$ or~$5$. In total we computed $103$ fields with these properties. All of them fell into case~\eqref{case:1}.
Note that this also gives examples when our $R^\minim=\TT$-result (Theorem~\ref{minthm}) holds because in these cases $\rhobar$ is unramified above~$p$,
totally odd and the condition on the inertia invariants is satisfied because the orders of the inertia groups are $1$ or~$2$.

In order not to only treat real quadratic fields, we also searched for and found a case-\eqref{case:1} example for
the imaginary quadratic field $K=\QQ(i)$ with $i=\sqrt{-1}$ for $q=3$ and $p=5$. It is obtained for the quadratic extension $L=K(\sqrt{-79i + 84})$
of~$K$, which has class number~$30$. The field $M$ is the unique unramified degree~$3$ extension of~$L$, and its class number is~$10$.

\subsection{Examples of case~\eqref{case:2} for $K=\QQ$}\label{sec:eg3}
Since in the range where we looked, case~\eqref{case:2} seems to be rather rare in the above set-up, we looked explicitly for
case~\eqref{case:2}-examples for the base field $K=\QQ$. We ran through imaginary quadratic fields of negative prime discriminant
(for each line, up to the largest value appearing in the line).
The results are summarised in the following table.
\begin{longtable}{l|l||p{13cm}}
$q$ & $p$ & negative prime discriminants with case~\eqref{case:2}\\
\hline
\hline
$3$ & $5$ & 673, 1193, 1993, 1999, 2819, 4219, 4637, 5087, 5437, 5791, 5897, 7907, 8803, 9013, 9103, 9349, 9551, 9857, 10391, 10453, 10937, 11491, 13873\\
\hline
$3$ & $7$ & 2749, 4513, 5717, 6581, 8059, 9613, 9733, 11971\\
\hline
$3$ & $11$ & 3061\\
\hline
$3$ & $13$ & 9397\\
\hline
$5$ & $11$ & 709, 1489\\
\hline
$5$ & $19$ & 3389, 3701\\
\hline
$7$ & $13$ & 997\\
\end{longtable}

\subsection{Example of case~\eqref{case:2} for real quadratic fields}\label{sec:eg4}
We also looked for a case-\eqref{case:2} example over a real quadratic field.
We found one for $K=\QQ(\sqrt{13})$, $q=3$, $p=5$. Let $\omega = \frac{1+\sqrt{13}}{2}$ and $\alpha = 15\omega -73$ and set
$L = K(\sqrt{\alpha})$. The norm of $\alpha$ is the prime $3559$.
The class number of $L$ equals~$24 = 2^3 \cdot 3$ and we let $M$ be the unique unramified cyclic extension of~$L$ of degree~$3$.
The class number of $M$ equals $200 = 2^3 \cdot 5^2$, so that the quotient of the two class numbers is $5^2$ and we are indeed
in case~\eqref{case:2} by the above criterion.

\bibliography{References}
\bibliographystyle{alpha}

\bigskip
\noindent Shaunak V.\ Deo\\
School of Mathematics\\
 Tata Institute of Fundamental Research\\
 Homi Bhabha Road\\
 Mumbai 400005\\
 India\\
\url{deoshaunak@gmail.com}

\bigskip

\noindent Gabor Wiese\\
University of Luxembourg\\
Department of Mathematics\\
Maison du nombre\\
6, avenue de la Fonte\\
L-4364 Esch-sur-Alzette\\
Grand-Duchy of Luxembourg\\
\url{gabor.wiese@uni.lu}
\end{document}